\numberwithin{equation}{section}
\newtheorem{theorem}{Theorem}[section]
\newtheorem{lemma}{Lemma}[section]
\newtheorem{proposition}{Proposition}[section]
\theoremstyle{definition}
\newtheorem{definition}{Definition}[section]
\newtheorem{remark}{Remark}[section]
\newcommand{\al}{\alpha}
\newcommand{\esssup}{\operatorname*{ess\,sup}}
\newcommand{\essinf}{\operatorname*{ess\,inf}}
\newcommand{\supp}{\operatorname*{supp}}
\newcommand{\mb}[1]{\mathbb{#1}}
\newcommand{\bdsy}[1]{\boldsymbol{#1}}
\newcommand{\vep}{\varepsilon}
\newcommand{\vphi}{\varphi}
\newcommand{\R}{\mathbb{R}}
\newcommand{\lam}{\lambda}
\newcommand{\lV}{\lVert}
\newcommand{\rV}{\rVert}
\newcommand{\N}{\mathbb{N}}
\begin{document}

\baselineskip=17pt

\title[Invariant measures for random piecewise convex maps]{Invariant measures for random piecewise convex maps}

\author[T. Inoue]{Tomoki INOUE}
\address[T. Inoue]{Graduate School of Science and Engineering\\ Ehime University\\ 3, Bunkyo-cho, Matsuyama, Ehime\\ 790-8577, JAPAN}
\email[T. Inoue]{inoue.tomoki.mz@ehime-u.ac.jp}

\author[H. Toyokawa]{Hisayoshi TOYOKAWA}
\address[H. Toyokawa]{Faculty of Engineering\\ Kitami Institute of Technology\\ 165 Koen-cho, Kitami, Hokkaido\\ 090-8507, JAPAN}
\email[H. Toyokawa]{h\_toyokawa@mail.kitami-it.ac.jp}

\begin{abstract}
We show the existence of Lebesgue-equivalent conservative and ergodic  $\sigma$-finite invariant measures for a wide class of one-dimensional random maps consisting of piecewise convex maps.
We also estimate the size of invariant measures around a small neighborhood of a fixed point where the invariant density functions may diverge.
Application covers random intermittent maps with critical points or flat points.
We also illustrate that the size of invariant measures tends to infinite for random maps whose right branches exhibit a strongly contracting property on average, so that they have a strong recurrence to a fixed point.
\end{abstract}

\subjclass[2020]{Primary 	37A40; Secondary 37H12, 37A05}
\keywords{Invariant Measures; Infinite Invariant Measures; Random Dynamical Systems; Piecewise Convex Maps; Random Piecewise Convex Maps}

\maketitle

\section{Introduction}\label{sec1}

For a given non-singular map on a probability space, the question whether an invariant measure, which is absolutely continuous with respect to the reference measure, exists or not is one of the fundamental problems in ergodic theory.
The same question for a random map (in terms of both annealed and quenched sense) makes sense and is also important passing through ergodic theory for Markov operators, skew-product transformations or Markov operator cocycles.
See \cites{Ar,F,Kre,Yu} and references therein.
On the one hand, if a system, whether deterministic or random, admits an absolutely continuous finite invariant measure, some classical ergodic theorems such as the Birkhoff ergodic theorem are applicable.
Moreover some limit theorems such as the central limit theorem may be expected \cites{ANV,D,D2}.
On the other hand, if a system possesses only an absolutely continuous $\sigma$-finite and infinite invariant measure, such a system is within the scope of infinite ergodic theory and has been paid attention for recent decades \cites{A,NNTY,TZ}.
Typical examples of such systems have indifferent or neutral, but weakly repelling, fixed points
and are known as an intermittent model, as well as a model of non-uniformly hyperbolic systems.
The existence of absolutely continuous $\sigma$-finite invariant measures and several statistical properties for random versions of intermittent maps thus have been recently enthusiastically studied \cites{BB,BBD,BBR,I3,T,T2}.

The subject of the present paper is a certain class of one-dimensional random dynamical systems called \emph{random piecewise convex maps} in annealed (or i.i.d.) sense (see the conditions \eqref{0}--\eqref{2} and \eqref{3} or \eqref{4} precisely).
The existence of invariant measures and their ergodic properties of the deterministic piecewise convex maps were firstly studied by Lasota and Yorke in \cite{LY} for the case when maps are uniformly expanding on the first branch and other branches have positive derivative, and then studied by Inoue in \cites{I0,I} for more general cases.
The aim of this paper is to generalize them, demonstrating that random piecewise convex maps admit Lebesgue-equivalent ergodic $\sigma$-finite invariant measures.
(For some interesting studies of random generalization of piecewise convex maps with ``position dependent'' probability measures, we refer to \cites{BG} (cf., \cite{Is}), while we do not deal with position dependent random maps but we handle random maps consisting of potentially uncountably many maps with infinite invariant measures.)
We also estimate the size of invariant measures, from which it is revealed whether the $\sigma$-finite invariant measures for random piecewise convex maps are finite or infinite.
The phenomenon that an invariant measure varies from finite to infinite as a parameter of a system varies is well-known for (deterministic) intermittent maps employed by Thaler in \cites{Th} or Liverani--Saussol--Vaienti (see \eqref{LSV} below) in \cites{LSV}.
Although our random piecewise convex maps also admit both finite and $\sigma$-finite infinite invariant measures depending on parameters and probabilities of choice of maps, some examples of them (e.g., Example \ref{subsec41}) have neither a common indifferent fixed point nor a common critical point, which is very different from deterministic cases.
The mechanism is, roughly speaking, derived from strong contracting property on average, which never occurs for deterministic systems and is unique to random dynamical systems.

We then briefly review the LSV map, named after Liverani--Saussol--Vaienti from \cite{LSV}, which has been analyzed as a simple model of intermittency, and we will compare them (and known random versions of them, see also \cites{BB,BBD,BBR}) with our random piecewise convex maps.
For $\al>0$, the LSV map $T_{\al}:[0,1]\to[0,1]$ is defined by
\begin{linenomath}
\begin{align}\label{LSV}
T_{\al}x=
\begin{cases}
x\left(1+2^{\al}x^{\al}\right) &x\in[0,\frac{1}{2}],\\
2x-1 &x\in(\frac{1}{2},1]
\end{cases}
\end{align}
\end{linenomath}
which has an indifferent fixed point 0 and Lebesgue-typical orbits would be trapped around a small neighborhood of 0 for a long time.
For this map $T_{\al}$, it is well-known that a Lebesgue-equivalent ergodic invariant measure exists and that the invariant density function is of order $x^{-\al}$ as $x\to 0$ \cites{LSV,Th,Y}.
Thus $T_{\al}$ possesses an equivalent finite invariant measure for $0<\al<1$ and an equivalent $\sigma$-finite and infinite invariant measure for $\al\ge1$.
The order of invariant measures radically affects their statistical properties, such as the central limit theorem or the mixing rate in the finite measure-preserving case (cf., \cites{BBR,G,S}) and the wandering rate in the Darling--Kac theorem or the arcsine law in the infinite measure-preserving case (cf., \cites{A,NNTY,TZ}).
Therefore it is certainly worth establishing invariant measures and analyzing the asymptotics of the invariant measures for given systems.
The asymptotics of the invariant density for $T_{\al}$ is also tightly related to the decay of the inverse images of the disconnected point $\frac{1}{2}$ by the left branch.
If we set $x_1(\al)=\frac{1}{2}$ and $x_n(\al)\in[0,\frac{1}{2})$ for $n\ge2$ such that $T_{\al}(x_{n+1}(\al))=x_n(\al)$ for $n\ge1$, then it follows from the results in \cites{Y,Th} that
\begin{linenomath}
\begin{align}\label{eq1}
x_n(\al)\sim C n^{-\frac{1}{\al}},
\end{align}
\end{linenomath}
for some $C>0$, where $a_n\sim b_n$ for positive sequences $\{a_n\}_{n=1}^{\infty},\{b_n\}_{n=1}^{\infty}\subset\R$ stands for $\lim_{n\to\infty}\frac{a_n}{b_n}=1$.
In this paper, for random piecewise convex maps, we establish the existence of Lebesgue-equivalent, conservative and ergodic $\sigma$-finite invariant measures and evaluate the asymptotics of the invariant measures, which is a generalization of results for random LSV maps as in \cites{BB,BBD}.
The advantage of our results is that we do not restrict ourselves to constituent maps of random dynamical systems to be at most countable nor expanding on average outside of a small neighborhood of the common indifferent fixed point.
As an application, we can modify a random LSV map to admit uniformly contracting branches and moreover to admit a critical or flat point around the inverse image of an indifferent fixed point (see Example \ref{subsec43}--\ref{subsec46}).
The key point in the estimate of the invariant measures for random piecewise convex maps is, on the contrary to the LSV maps, the decay of random inverse images of the disconnected point by the \emph{right} branches (see Definition \ref{def:y}, Theorem \ref{Cor1} and Theorem \ref{Cor2} precisely).
That is, one needs to take the contracting effect by right branches into account.
Indeed, the induced (random) map/the first return (random) map for (random) LSV maps satisfy uniformly expanding property (on average), whereas those for our random piecewise convex maps do not in general.
Hence we cannot expect so-called spectral decomposition method from the Lasota--Yorke type inequality any longer.
We refer to \cite{T2} for similar arguments and some background.

\subsection{Notation}\label{subsec11}

Throughout the paper, all sets and functions mentioned are measurable and any difference on null sets with respect to a measure under consideration is ignored.
As usual, $L^1(X,\lam)$, for a set $X$ with measurable structure and a measure $\lam$ over $X$, stands for the set of all $\lam$-integrable functions over $X$ where functions differing only on $\lam$-null sets are identified.
For a measurable set $A$, $1_A$ always denotes the indicator function on $A$.

Let $\{a_n\}_{n=1}^{\infty},\{b_n\}_{n=1}^{\infty}\subset\R$ be positive sequences.
The notation $a_n\sim b_n$ is explained below \eqref{eq1}.
For the notational convention, we further use $a_n\gtrapprox b_n$, equivalently $b_n\lessapprox a_n$, by meaning that there exists a constant $M>0$ independent of $n\in\N$ such that $a_n\ge Mb_n$ holds.
$a_n\approx b_n$ is used when $a_n\gtrapprox b_n$ and $a_n\lessapprox b_n$ hold.

\subsection{Organization}\label{subsec12}

The present paper is organized as follows.
In \S \ref{sec2}, we give necessary preliminaries and define random piecewise convex maps.
\S \ref{sec3} is devoted to our main result.
We establish in Theorem \ref{Thm1} the existence of Lebesgue-equivalent, conservative and ergodic $\sigma$-finite invariant measures for random piecewise convex maps.
Theorem \ref{Cor1} and Theorem \ref{Cor2} show the asymptotics of the invariant measures given in Theorem \ref{Thm1}.
We illustrate in \S \ref{sec4} several examples of random piecewise convex maps.
\S \ref{sec4} also provides some counterexample which possesses an infinite derivative.

\section{The model of random piecewise convex maps}\label{sec2}

In this section, we define random picewise convex maps.
Let $X=[0,1]$ be the unit interval equipped with the Lebsgue measure $\lam$ over the Borel $\sigma$-algebra $\mathcal{B}$ and
let $\mb{A}$ and $\mb{B}$ be some parameter regions with some measurable structure (usually they are subspaces of $\N$ or $\R$).
For each $\al\in\mb{A}$ and $\beta\in\mb{B}$, we define a non-singular maps $T_{\al,\beta}$, i.e., $\lam(T_{\al,\beta}^{-1}N)=0$ whenever $\lam(N)$=0, on $X$ by
\[
T_{\al,\beta}x=
\begin{cases}
\tau_{\al}(x) &x\in[0,\frac{1}{2}],\\
S_{\beta}(x) &x\in(\frac{1}{2},1]
\end{cases}
\]
where $\tau_{\al}:[0,\frac{1}{2}]\to X$ and $S_{\beta}:(\frac{1}{2},1]\to X$ for $\al\in\mb{A}$ and $\beta\in\mb{B}$ are injective and continuous maps with some conditions (see the conditions \eqref{0}--\eqref{2}, \eqref{3} and \eqref{4} precisely).
The standing assumption on $\{\tau_{\al}:\al\in\mb{A}\}$ and $\{S_{\beta}:\beta\in\mb{B}\}$ is the following:

\begin{enumerate}[(1)]
\setcounter{enumi}{-1}
\item \label{0}
The map $T:\mb{A}\times\mb{B}\times X\to X$; $(\al,\beta,x)\mapsto T_{\al,\beta}(x)$ is measurable with respect to each variable.
\end{enumerate}
Note that the above condition \eqref{0} is fulfilled if $\mb{A}$ and $\mb{B}$ are topological spaces endowed with their Borel structures and the maps $\mb{A}\ni\al\mapsto\tau_{\al}$ and $\mb{B}\ni\beta\mapsto S_{\beta}$ are continuous.

Our random dynamical systems are defined as random compositions of maps $\{T_{\al,\beta}:\al\in\mb{A},\ \beta\in\mb{B}\}$ with the condition \eqref{0} in the annealed sense.
In order to define our random dynamical systems, we set probability measures $\nu_{\mb{A}}$ and $\nu_{\mb{B}}$ on $\mb{A}$ and $\mb{B}$, respectively.
$\nu_{\mb{A}}^{\infty}$ denotes the infinite product of the probability measure of $\nu_{\mb{A}}$ over $\mb{A}^{\N}$.
Then, for the family of maps $\{T_{\al,\beta}:\al\in\mb{A},\ \beta\in\mb{B}\}$ and probability measures $\nu_{\mb{A}}$ and $\nu_{\mb{B}}$ over the parameter spaces $\mb{A}$ and $\mb{B}$, we consider the following transition function
\begin{linenomath}
\begin{align}\label{tp}
\mb{P} \left(x,A\right)=\int_{\mb{A}\times\mb{B}}1_A\left(T_{\al,\beta}x\right)d\nu_{\mb{A}}(\al)d\nu_{\mb{B}}(\beta)
\end{align}
\end{linenomath}
for each $A\in\mathcal{B}$ and $\lam$-almost every $x\in X$.
By the condition \eqref{0} and non-singularity of each $T_{\al,\beta}$ with respect to $\lam$, it is straightforward to see that this transition function is null-preserving, i.e., $\lam(N)=0$ implies $\mb{P}(x,N)=0$ for $\lam$-almost every $x\in X$.
Thus, we can define the corresponding Markov operator $P:L^1(X,\lam)\to L^1(X,\lam)$ (i.e., $Pf\ge0$ and $\lV Pf\rV_{L^1}=\lV f\rV_{L^1}$ for each $f\in L^1(X,\lam)$ non-negative) by
\[
\int_A Pf d\lam = \int_X f\cdot \mb{P}(\,\cdot\,,A)d\lam
\]
for each $f\in L^1(X,\lam)$ and $A\in\mathcal{B}$.
The adjoint operator of $P$ acting on $L^{\infty}(X,\lam)$ is denoted by $P^*$ which is characterized by
\[
\int_X Pf\cdot gd\lam = \int_X f\cdot P^*gd\lam
\]
for each $f\in L^1(X,\lam)$ and $g\in L^{\infty}(X,\lam)$.

In order to make more precise assumptions on random piecewise convex maps, we introduce some notations.
As in the previous section (note that $\tau_{\al}$ is not necessarily the same as the LSV map $T_{\al}\vert_{[0,\frac{1}{2}]}$ from \S \ref{sec1}), for $\bdsy{\al}=(\al_1,\al_2,\dots)\in\mb{A}^{\mb{N}}$, let $x_1^{\bdsy{\al}}=x_1\coloneqq\frac{1}{2}$ and
$x_{n+1}^{\bdsy{\al}}
\coloneqq \tau_{\al_{n-1}}^{-1}\circ\cdots\circ\tau_{\al_1}^{-1}(x_1^{\bdsy{\al}})$ for $n\ge1$.
For simplicity, let $x_0^{\bdsy{\al}}=x_0\coloneqq 1$ and set $X_n^{\bdsy{\al}}\coloneqq(x_{n+1}^{\bdsy{\al}},x_n^{\bdsy{\al}}]$ for $\bdsy{\al}\in\mb{A}^{\mb{N}}$ and $n\ge0$.

For considering the inverses by the right branch of $x_n^{\bdsy{\al}}$'s as well, we need the following definition:
\begin{definition}\label{def:y}
$\eta$ is defined by a map from $\mb{A}^{\mb{N}}\times\mb{B}$ to $\N\cup\{0\}$ satisfying $S_{\beta}(1)\in X_{\eta(\bdsy{\al},\beta)}^{\bdsy{\al}}$.
\end{definition}
We always assume that $\eta$ is measurable as a standing hypothesis.
Then, for $\bdsy{\al}\in\mb{A}$ and $\beta\in\mb{B}$, let $y_1^{\bdsy{\al},\beta}=y_1\coloneqq 1$ and $y_{n+1}^{\bdsy{\al},\beta}$ be the inverse of $x_{\eta(\bdsy{\al},\beta)+n}^{\bdsy{\al}}$ by the right branch of $T_{\al,\beta}$, namely,
\[
y_{n+1}^{\bdsy{\al},\beta}\coloneqq S_{\beta}^{-1}\left(x_{\eta(\bdsy{\al},\beta)+n}^{\bdsy{\al}}\right)
\quad\text{for $n\ge1$.}
\]
We set $Y_n^{\bdsy{\al},\beta}\coloneqq (y_{n+1}^{\bdsy{\al},\beta},y_n^{\bdsy{\al},\beta}]$ for $n\ge1$ and $Y\coloneqq[\frac{1}{2},1]$.

Throughout the paper we assume, together with the condition \eqref{0}, that a family of maps $\{T_{\al,\beta}:\al\in\mb{A},\ \beta\in\mb{B}\}$ satisfies the following conditions (piecewise convex property, see also Figure \ref{fig1}):
for $\nu_{\mb{A}}$-almost every $\al\in\mb{A}$ and $\nu_{\mb{B}}$-almost every $\beta\in\mb{B}$,
\begin{enumerate}[(1)]
\item\label{1}
$\tau_{\al}$ and $S_{\beta}$ are $C^1$-functions and $S_{\beta}$ can be extended to a continuous function on $Y$ (the extension is also denoted by the same symbol $S_{\beta}$) with $\tau_{\al}(0)=0$, $\tau_{\al}(\frac{1}{2})=1$ and $S_{\beta}(\frac{1}{2})=0$;
\item\label{2}
$\tau_{\al}'$ and $S_{\beta}'$ are non-decreasing on $(0,\frac{1}{2})$ and $(\frac{1}{2},1)$, respectively, with $\tau_{\al}'(0)\ge1$, $\tau_{\al}'(x)>1$ for $x\in(0,\frac{1}{2})$, $S_{\beta}'(\frac{1}{2})\ge0$ and $S_{\beta}'(x)>0$ for $x\in (\frac{1}{2},1)$, where $\tau_{\al}'(0)$ and $S_{\beta}'(\frac{1}{2})$ are taken as the right derivatives.
\end{enumerate}
By our assumptions \eqref{1} and \eqref{2}, for $\nu_{\mb{A}}^{\infty}$-almost every $\bdsy{\al}=(\al_1,\al_2,\dots)\in\mb{A}^{\N}$ and $\nu_{\mb{B}}$-almost every $\beta\in\mb{B}$, we have $T_{\al_n,\beta}X_n^{\bdsy{\al}}=X_{n-1}^{\bdsy{\al}}$ and $T_{\al,\beta}Y_{n+1}^{\bdsy{\al},\beta}=X_{\eta(\bdsy{\al},\beta)+n}^{\bdsy{\al}}$ for any $n\ge1$ and $T_{\al_0,\beta}Y_1^{\bdsy{\al},\beta}=(x_{\eta(\bdsy{\al},\beta)+1}^{\bdsy{\al}},S_{\beta}(1)]\subset X_{\eta(\bdsy{\al},\beta)}^{\bdsy{\al}}$, where $\al_0\in\mb{A}$ is arbitrary.

\begin{remark}
(I)
The phase space $X$ is of course not necessarily $[0,1]$ but just needs to be a bounded interval in $\R$.
Similarly, the choice of the discontinuous point $\frac{1}{2}$ is just for simplicity, that is, we can take an arbitrary $c\in(0,1)$ instead of $\frac{1}{2}$ so that $\tau_{\al}$ and $S_{\beta}$ are defined on $[0,c]$ and $(c,1]$ respectively.
Other similar generalizations, such as increasing the number of partitions to more than two or the case when $\tau_{\al}$'s are not surjective too, may be considered without big difficulty.
For instance, if we decompose $X$ into $\{X_i\}_{i=0}^n$ with $X_i=[a_i,a_{i+1})$ with $0=a_0<a_1<a_2<\cdots<a_n<a_{n+1}=1$ for some $n\ge2$, where maps on $X_0$ satisfy the conditions on $\tau_{\al}$ and maps on $X_i$ for $i=1,\dots,n$ satisfy the conditions on $S_{\beta}$ from \eqref{1} and \eqref{2}, then the strongest contracting property in $\{X_i\}_{i=1}^n$ would dominate the statistical laws of the random system.

(II)
In the condition \eqref{1}, the assumption that $\tau_{\al}$ and $S_{\beta}$ are $C^1$ can be relaxed to the following condition:
there are families of countable open subintervals $\{I_{n}^{L}\}_n$ and $\{I_{n}^{R}\}_n$, with the closure of their union being $X$, such that, for $\nu_{\mb{A}}$-almost every $\al\in\mb{A}$ and $\nu_{\mb{B}}$-almost every $\beta\in\mb{B}$, $\tau_{\al}$ and $S_{\beta}$ are $C^1$ on $I_n^L$ and $I_n^R$, respectively for each $n$.
Hence some (but not all) examples from \cite{T2} are also in sight of the present paper.

(III)
In the above conditions \eqref{1} and \eqref{2}, we do not exclude $\tau_{\al}'(0)=1$ nor $S_{\beta}'(\frac{1}{2})=0$ for $\al\in\mb{A}$ and $\beta\in\mb{B}$.
Furthermore, we will consider a random map with the common indifferent fixed point and the common flat point i.e., $\tau_{\al}'(0)=1$ and $S_{\beta}^{(n)}(\frac{1}{2})=0$ for any $\al\in\mb{A}$, $\beta\in\mb{B}$ and $n\ge1$ (see Example \ref{subsec44} and \ref{subsec45}).
\end{remark}

\begin{figure}[h]
\centering
\includegraphics[width=8cm]{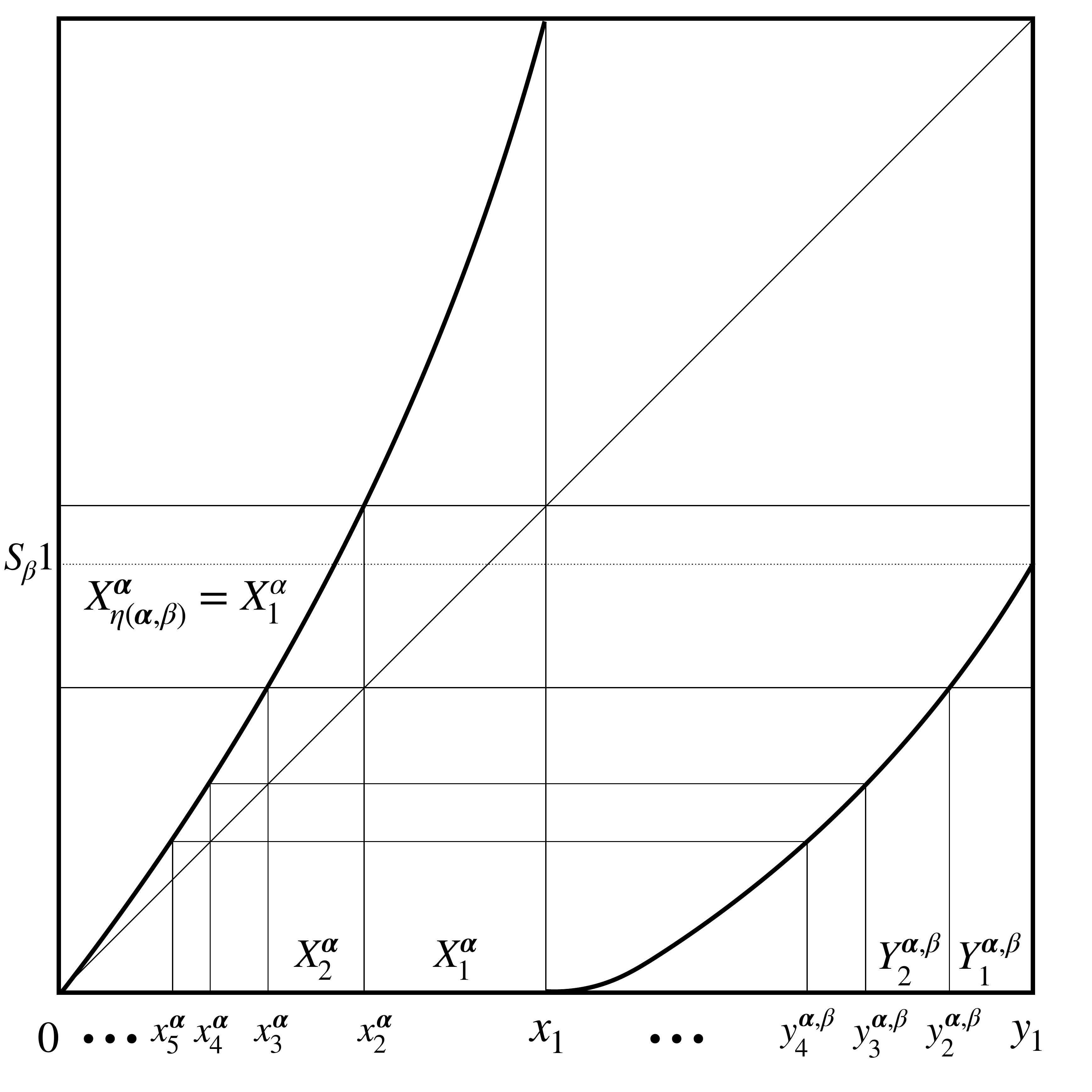}
\caption{A graph of a possible $T_{\al,\beta}$: in this case $\eta(\bdsy{\al},\beta)=1$ where $\bdsy{\al}=(\al,\al,\dots)$.}
\label{fig1}
\end{figure}

Recall that for a Markov operator $P$, a measure $\mu$ over $(X,\mathcal{B})$ is called an \emph{absolutely continuous} (resp.\ \emph{equivalent}) \emph{$\sigma$-finite invariant measure} if $\mu$ is a $\sigma$-finite measure which is absolutely continuous (resp.\ equivalent) with respect to $\lam$ and its Radon--Nikod\'{y}m derivative $\frac{d\mu}{d\lam}$ is a (non-trivial) fixed point of $P$.
Note that by positivity of Markov operators the domain of any Markov operator can be naturally extended to the set of non-negative and locally integrable functions and hence the definition of absolutely continuous $\sigma$-finite \emph{infinite} invariant measures makes sense.

We then consider the following technical conditions on our random dynamical systems, which are important in establishing the existence of equivalent $\sigma$-finite invariant measures.

\begin{enumerate}[(A)]
\item \label{3}
$\displaystyle\operatorname*{ess\,sup}_{\al\in\mb{A}}\tau'_{\al}\left(\tfrac{1}{2}\right)<\infty$;
\item \label{4}
$\displaystyle\int_{\mb{A}}\frac{1}{x_1-x_2^{\bdsy{\al}}}d\nu_{\mb{A}}(\al)<\infty$.
\end{enumerate}

Obviously the condition \eqref{3} implies the condition \eqref{4}.

\begin{lemma}\label{lem2-1}
Under the assumption \eqref{0}--\eqref{2}, the condition \eqref{4} implies the following: for any $\delta>0$, there exists $N_0\ge1$ such that
\[
\int_{\mb{A}^{\N}\times\mb{B}}\frac{y_{N_0+1}^{\bdsy{\al},\beta}-\frac{1}{2}}{x_1^{\bdsy{\al}}-x_2^{\bdsy{\al}}}d\nu_{\mb{A}}^{\infty}(\bdsy{\al})\nu_{\mb{B}}(\beta)<\delta.
\]
\end{lemma}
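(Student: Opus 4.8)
The plan is to rewrite the quantity in terms that manifestly inherit integrability from the condition \eqref{4}, and then to use dominated convergence (or monotone convergence on the tail) to push the integral below $\delta$ by taking $N_0$ large. The natural first step is to control the numerator $y_{N_0+1}^{\bdsy{\al},\beta}-\tfrac12$ geometrically. By the piecewise convex property \eqref{1}--\eqref{2}, the right branch $S_{\beta}$ has non-decreasing derivative on $(\tfrac12,1)$ and $S_{\beta}(\tfrac12)=0$; so on the interval $Y_n^{\bdsy{\al},\beta}$ the map $T_{\al,\beta}$ (via $S_\beta$) expands distances to $\tfrac12$, while on the left branch the $\tau_{\al_j}^{-1}$'s used to define $x_n^{\bdsy\al}$ contract. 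More precisely, since $S_\beta(y_{n+1}^{\bdsy\al,\beta})=x_{\eta+n}^{\bdsy\al}$ and $S_\beta(\tfrac12)=0$, convexity gives
\begin{equation*}
y_{n+1}^{\bdsy{\al},\beta}-\tfrac12 \;\le\; \frac{x_{\eta(\bdsy\al,\beta)+n}^{\bdsy\al}}{S_\beta'(y_{n+1}^{\bdsy\al,\beta})}\;\le\;\frac{x_{\eta(\bdsy\al,\beta)+n}^{\bdsy\al}}{S_\beta'\!\left(\tfrac12^{+}\right)}\wedge\bigl(y_n^{\bdsy{\al},\beta}-\tfrac12\bigr),
\end{equation*}
and in any case $y_{n+1}^{\bdsy\al,\beta}-\tfrac12\le x_{\eta(\bdsy\al,\beta)+n}^{\bdsy\al}\to 0$ as $n\to\infty$ because the nested intervals $X_n^{\bdsy\al}$ shrink (their lengths sum to $\tfrac12$, so $x_n^{\bdsy\al}\downarrow 0$). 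Thus for each fixed $(\bdsy\al,\beta)$ the numerator tends to $0$ as $N_0\to\infty$.

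Next I would produce an integrable dominating function independent of $N_0$. For $N_0\ge 1$ one has trivially $y_{N_0+1}^{\bdsy\al,\beta}-\tfrac12\le y_2^{\bdsy\al,\beta}-\tfrac12\le \tfrac12$, and I claim the ratio is dominated by $C/(x_1^{\bdsy\al}-x_2^{\bdsy\al})$ for a constant $C$. Indeed $y_{N_0+1}^{\bdsy\al,\beta}-\tfrac12\le x_{\eta+N_0}^{\bdsy\al}\le x_1^{\bdsy\al}=\tfrac12$, so
\begin{equation*}
\frac{y_{N_0+1}^{\bdsy{\al},\beta}-\tfrac12}{x_1^{\bdsy\al}-x_2^{\bdsy\al}}\;\le\;\frac{1/2}{x_1^{\bdsy\al}-x_2^{\bdsy\al}},
\end{equation*}
and the right-hand side is $\nu_{\mb{A}}^\infty\otimes\nu_{\mb{B}}$-integrable precisely by \eqref{4} (it depends only on $\al_1$ through $x_2^{\bdsy\al}=\tau_{\al_1}^{-1}(\tfrac12)$, and $\eqref{4}$ says $\int_{\mb A}(x_1-x_2^{\bdsy\al})^{-1}d\nu_{\mb A}<\infty$). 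Since for each fixed $(\bdsy\al,\beta)$ the integrand decreases to $0$ as $N_0\to\infty$ (monotonicity in $N_0$ coming from $x_{\eta+N_0}^{\bdsy\al}$ being non-increasing in $N_0$), the dominated (equivalently, monotone) convergence theorem yields
\begin{equation*}
\lim_{N_0\to\infty}\int_{\mb{A}^{\N}\times\mb{B}}\frac{y_{N_0+1}^{\bdsy{\al},\beta}-\tfrac12}{x_1^{\bdsy{\al}}-x_2^{\bdsy{\al}}}\,d\nu_{\mb{A}}^{\infty}(\bdsy{\al})\,d\nu_{\mb{B}}(\beta)=0,
\end{equation*}
so some $N_0$ makes it $<\delta$.

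I expect the main obstacle to be the measurability and pointwise-convergence bookkeeping rather than any hard estimate: one must check that $(\bdsy\al,\beta)\mapsto y_{N_0+1}^{\bdsy\al,\beta}$ is jointly measurable (this uses measurability of $\eta$, assumed as a standing hypothesis, together with condition \eqref{0}), and that $x_n^{\bdsy\al}\downarrow 0$ for $\nu_{\mb A}^\infty$-a.e.\ $\bdsy\al$ — the latter because $\tau_{\al_j}'(x)>1$ on $(0,\tfrac12)$ forces the nested intervals $X_n^{\bdsy\al}$ to have lengths summing to a finite total, hence $x_n^{\bdsy\al}\to 0$. A mild subtlety is that $\eta(\bdsy\al,\beta)$ varies with $(\bdsy\al,\beta)$, so the index $\eta(\bdsy\al,\beta)+N_0$ is not a fixed deterministic integer; but this does not affect the argument since $x_{\eta(\bdsy\al,\beta)+N_0}^{\bdsy\al}\le x_{N_0}^{\bdsy\al}$ (as $\eta\ge 0$ and $x_\cdot^{\bdsy\al}$ is non-increasing), giving a single $N_0$-dependent, $(\bdsy\al,\beta)$-uniform bound on the numerator that still goes to $0$. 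Assembling these pieces gives the claim.
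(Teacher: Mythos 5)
Your overall strategy (exchange limit and integral via dominated convergence, with the dominating function supplied by condition~\eqref{4}) is the same as the paper's, and the identification of $\tfrac{1/2}{x_1^{\bdsy\al}-x_2^{\bdsy\al}}$ as an integrable majorant depending only on $\al_1$ is correct. However, the way you justify the pointwise convergence $y_{N_0+1}^{\bdsy\al,\beta}-\tfrac12\to 0$ contains a genuine error: the inequality $y_{n+1}^{\bdsy\al,\beta}-\tfrac12\le x_{\eta(\bdsy\al,\beta)+n}^{\bdsy\al}$ is false in general, and in fact the convexity of $S_\beta$ gives the \emph{opposite} bound. Indeed, since $S_\beta'$ is non-decreasing and $S_\beta(\tfrac12)=0$, one has $x_{\eta+n}^{\bdsy\al}=S_\beta(y_{n+1})=\int_{1/2}^{y_{n+1}}S_\beta'\le S_\beta'(y_{n+1})\,(y_{n+1}-\tfrac12)$, hence $y_{n+1}-\tfrac12\ge x_{\eta+n}^{\bdsy\al}/S_\beta'(y_{n+1})$, a \emph{lower} bound. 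Your claimed chain $y_{n+1}-\tfrac12\le x_{\eta+n}/S_\beta'(y_{n+1})\le x_{\eta+n}/S_\beta'(\tfrac12^+)$ also breaks when $S_\beta'(\tfrac12)=0$, which the paper explicitly permits (Remark~(III) and Examples~\ref{subsec44}--\ref{subsec45}). A concrete counterexample: for $S_\beta(y)=4(y-\tfrac12)^2$ one has $S_\beta^{-1}(x)=\tfrac12+\tfrac12\sqrt{x}$, so $y_{n+1}-\tfrac12=\tfrac12\sqrt{x_{\eta+n}}\gg x_{\eta+n}$ for small $x_{\eta+n}$. This same false bound reappears in your final paragraph where you assert a $\beta$-uniform bound $y_{N_0+1}-\tfrac12\le x_{N_0}^{\bdsy\al}$.

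The good news is that the flaw is local and easily repaired without changing the architecture: one only needs $y_{N_0+1}^{\bdsy\al,\beta}\to\tfrac12$ pointwise, and this follows from continuity of $S_\beta^{-1}$ at $0$ rather than from a geometric rate. Since $x_n^{\bdsy\al}\downarrow 0$ (the intervals $X_n^{\bdsy\al}$ are disjoint with lengths summing to $\tfrac12$, and $\tau_\al'>1$ on $(0,\tfrac12)$ prevents $x_n^{\bdsy\al}$ from stabilizing at a positive value) and $S_\beta$ is continuous, strictly increasing on $[\tfrac12,1]$ with $S_\beta(\tfrac12)=0$, we get $y_{N_0+1}^{\bdsy\al,\beta}=S_\beta^{-1}(x_{\eta(\bdsy\al,\beta)+N_0}^{\bdsy\al})\to S_\beta^{-1}(0)=\tfrac12$; no comparison between $y_{n+1}-\tfrac12$ and $x_{\eta+n}$ is needed. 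With that substitution your dominated-convergence argument is complete and matches the paper's.
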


\begin{proof}
It follows from \eqref{1} and \eqref{2} that $y_{N+1}^{\bdsy{\al},\beta}-\frac{1}{2}\to0$ as $N\to\infty$ for $\nu_{\mb{A}}^{\infty}$-almost every $\bdsy{\al}\in\mb{A}^{\N}$ and $\nu_{\mb{B}}$-almost every $\beta\in\mb{B}$.
Then we have
\begin{linenomath}
\begin{align*}
\lim_{N\to\infty}\int_{\mb{A}^{\N}\times\mb{B}}\frac{y_{N+1}^{\bdsy{\al},\beta}-\frac{1}{2}}{x_1^{\bdsy{\al}}-x_2^{\bdsy{\al}}}d\nu_{\mb{A}}^{\infty}(\bdsy{\al})\nu_{\mb{B}}(\beta)
&=\int_{\mb{A}^{\N}\times\mb{B}}\lim_{N\to\infty}\frac{y_{N+1}^{\bdsy{\al},\beta}-\frac{1}{2}}{x_1^{\bdsy{\al}}-x_2^{\bdsy{\al}}}d\nu_{\mb{A}}^{\infty}(\bdsy{\al})\nu_{\mb{B}}(\beta)\\
&=0
\end{align*}
\end{linenomath}
from the Lebesgue dominated convergence theorem, which proves the lemma.
\end{proof}

In what follows, $\{T_{\al,\beta};\nu_{\mb{A}},\nu_{\mb{B}}:\al\in\mb{A},\ \beta\in\mb{B}\}$ denotes a random dynamical system given by the transition function \eqref{tp} with conditions \eqref{0}--\eqref{2} and is referred as a \emph{random piecewise convex map}.
In the next section, we prove the existence and uniqueness of equivalent $\sigma$-finite invariant measures for random piecewise convex maps.
Furthermore, we show the asymptotics of the invariant measures.

\section{Equivalent $\sigma$-finite invariant measures}\label{sec3}

Before stating the main results in the paper, some basic definitions are listed.
Let $\mu$ be an absolutely continuous measure with respect to $\lam$.
Recall that an \emph{invariant set} for a Markov operator $P$ is a measurable set $E\in\mathcal{B}$ with the property $P^*1_E=1_E$ $\lam$-almost everywhere and $P$ is called \emph{ergodic} with respect to $\mu$ if each invariant set $E$ satisfies either $\mu(E)=0$ or $\mu(X\setminus E)=0$.
$P$ is called \emph{conservative} with respect to $\mu$ if any function $h$ supported on $\supp\mu$ with $h\ge P^*h$ satisfies $h=P^*h$.
Other equivalent characterization of conservativity are found in \cite{Kre}*{\S3.1}.

The following main theorem establishes the existence of Lebesgue-equivalent, conservative and ergodic $\sigma$-finite invariant measures for random piecewise convex maps defined in the previous section.

\begin{theorem}\label{Thm1}
Let $\{T_{\al,\beta};\nu_{\mb{A}},\nu_{\mb{B}}:\al\in\mb{A},\ \beta\in\mb{B}\}$ be a random piecewise convex map satisfying the conditions \eqref{0}--\eqref{2} and \eqref{4} in \S \ref{sec2}.
Then, for the random piecewise convex map, there exists a conservative and ergodic $\sigma$-finite invariant measure $\mu$ which is equivalent to the Lebesgue measure $\lam$.
Moreover, the invariant density function of $\mu$, $\frac{d\mu}{d\lam}$, satisfies
\begin{enumerate}[(A)]
\setcounter{enumi}{3}
\item\label{D}
$\frac{d\mu}{d\lam}$ restricted on $(0,\frac{1}{2})$
is non-increasing $\lam$-almost everywhere,
and
\setcounter{enumi}{20}
\item\label{U}
for any $0<\vep<\frac{1}{2}$, there is a constant $C=C(\vep)>0$ such that $\frac{d\mu}{d\lam}\le C$, $\lam$-almost everywhere on $X\setminus [0,\vep)$.
\end{enumerate}
If we suppose \eqref{3} (and hence \eqref{4} is automatically fulfilled),
then it also holds
\begin{enumerate}[(A)]
\setcounter{enumi}{11}
\item\label{L}
there is a constant $c>0$ such that $\frac{d\mu}{d\lam}\ge c$, $\lam$-almost everywhere on $X$.
\end{enumerate}
\end{theorem}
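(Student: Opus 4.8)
The plan is to work with the annealed transfer (Perron--Frobenius) operator $P$ of the random system, use the piecewise convexity to confine an invariant density inside a cone of monotone functions, and then read off \eqref{D}, \eqref{U} and \eqref{L} from that cone structure together with \eqref{3}. Writing $P=\int_{\mb{A}\times\mb{B}}P_{\al,\beta}\,d\nu_{\mb{A}}(\al)\,d\nu_{\mb{B}}(\beta)$ with $P_{\al,\beta}$ the Perron--Frobenius operator of $T_{\al,\beta}$, one has for $f\in L^1(X,\lam)$
\[
Pf(y)=\int_{\mb{A}}\frac{f(\tau_{\al}^{-1}y)}{\tau_{\al}'(\tau_{\al}^{-1}y)}\,d\nu_{\mb{A}}(\al)+\int_{\mb{B}}\frac{1_{[0,S_{\beta}(1)]}(y)\,f(S_{\beta}^{-1}y)}{S_{\beta}'(S_{\beta}^{-1}y)}\,d\nu_{\mb{B}}(\beta).
\]
By \eqref{2} the weights $y\mapsto 1/\tau_{\al}'(\tau_{\al}^{-1}y)$ and $y\mapsto 1/S_{\beta}'(S_{\beta}^{-1}y)$ are non-increasing on the relevant intervals while $\tau_{\al}^{-1}$ and $S_{\beta}^{-1}$ are increasing; hence, for non-negative $f$ non-increasing on $[0,1]$, each summand is non-increasing on $[0,1]$ (the downward jump of the $S$-term at $S_{\beta}(1)$ and the matching of one-sided values at $\tfrac12$ being consistent with monotonicity), so $Pf$ is non-increasing on $[0,1]$. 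Thus the cone $\mathcal{D}$ of non-negative non-increasing functions on $[0,1]$ satisfies $P\mathcal{D}\subseteq\mathcal{D}$ and contains the constant $\mathbf 1$; this is the mechanism behind \eqref{D}.

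Next I would produce the invariant density from the orbit $\{P^{n}\mathbf 1\}_{n\ge0}\subset\mathcal{D}$. Each $P^{n}\mathbf 1$ is non-increasing, and upon a suitable normalisation (by total mass in the finite case, and on a fixed compact subinterval of $(0,1]$ in the infinite case) the monotonicity yields uniform pointwise and bounded-variation estimates on every $[\vep,1]$; Helly's selection theorem and a diagonal argument over the Ces\`aro means $\tfrac1n\sum_{k=0}^{n-1}P^{k}\mathbf 1$ then give a subsequence converging $\lam$-a.e.\ on $(0,1]$ to some $h\ge0$ non-increasing on $[0,1]$, and testing against bounded functions supported away from $0$ together with $Pf_{n}-f_{n}=\tfrac1n(P^{n}\mathbf 1-\mathbf 1)\to0$ in $L^{1}$ shows $Ph=h$. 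The crucial point is that $h\not\equiv0$ and stays finite on $(0,1]$: this is where condition \eqref{4}, via Lemma \ref{lem2-1}, is essential — it bounds on average the mass that the right branches compress into a shrinking neighbourhood of the fixed point $0$ (the ratio $(y_{N_0+1}^{\bdsy{\al},\beta}-\tfrac12)/(x_1^{\bdsy{\al}}-x_2^{\bdsy{\al}})$ being small), thereby preventing all the mass from being absorbed near $0$ and keeping the normalised sequence from degenerating. Setting $\mu:=h\,d\lam$, since $h$ is finite on $(0,1]$ the measure $\mu$ is automatically $\sigma$-finite (finite on each $[\vep,1]$), while it may be infinite when $h$ diverges as $y\to0$.

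I would then establish the remaining properties. Equivalence $\mu\sim\lam$: $\{h>0\}=(0,1)$, because if $h=0$ a.e.\ on $(a,1]$ with $a<1$ then $h=Ph$ and the $\tau$-term force $h=0$ a.e.\ on the strictly larger set $(\tau_{\al}^{-1}(a),\tfrac12]$ for $\nu_{\mb A}$-a.e.\ $\al$ (as $\tau_{\al}^{-1}(a)<a$ by strict expansion of $\tau_{\al}$), contradicting $h>0$ just below $a$; hence $a=1$, $h>0$ $\lam$-a.e. Property \eqref{U} is then immediate, since $h$ is non-increasing and finite on $(0,1]$, so $h\le h(\vep)<\infty$ on $X\setminus[0,\vep)$. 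Ergodicity: an invariant set $E$ satisfies $P^{*}1_{E}=1_{E}$, which on $[0,\tfrac12]$ forces $1_{E}\circ\tau_{\al}=1_{E}$ a.e.\ for a.e.\ $\al$, i.e.\ $\tau_{\al}^{-1}E=E\cap[0,\tfrac12]$; combining surjectivity of $\tau_{\al}$ onto $[0,1]$ with the mixing between small scales near $0$ and $[\tfrac12,1]$ carried by the right branch forces $\lam(E)\in\{0,1\}$. Conservativity: $\mu([\tfrac12,1])<\infty$ by \eqref{U}, and the left branches push every point away from $0$ until it re-enters $[\tfrac12,1]$, so $\mu$-a.e.\ orbit returns there infinitely often; Poincar\'e recurrence for the induced finite-measure system on $[\tfrac12,1]$ together with ergodicity upgrades to conservativity on $X$ (cf.\ \cite{Kre}*{\S3.1}).

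Finally, for \eqref{L} assume \eqref{3}: $M:=\esssup_{\al}\tau_{\al}'(\tfrac12)<\infty$, and since $\tau_{\al}'$ is non-decreasing on $[0,\tfrac12]$ one has $\tau_{\al}'\le M$ there for $\nu_{\mb{A}}$-a.e.\ $\al$. Because $\tau_{\al}^{-1}y\in[0,\tfrac12]$ for every $y\in X$ and $\nu_{\mb A}$ is a probability measure, the left-branch part of $P$ alone gives $Pf\ge M^{-1}\essinf_{[0,1/2]}f$ $\lam$-a.e.\ on $X$ for every non-negative $f$; applying this to $h=Ph$ and using $\essinf_{[0,1/2]}h=h(\tfrac12^{-})\ge h(\tfrac34)>0$ (since $h$ is non-increasing on $[0,1]$ and strictly positive on $(0,1)$ by the previous step) yields $h\ge c:=M^{-1}h(\tfrac34)>0$ $\lam$-a.e., which is \eqref{L}. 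The main obstacle throughout is the non-degeneracy step of the second paragraph — equivalently, the conservativity/recurrence of $P$: this is the place where the potentially strong contracting effect of the right branches near $0$ must be controlled quantitatively via Lemma \ref{lem2-1} and the combinatorics of $x_{n}^{\bdsy{\al}}$, $y_{n}^{\bdsy{\al},\beta}$ and $\eta$, and it replaces the uniform-expansion (spectral) argument available for deterministic piecewise convex maps but unavailable here.
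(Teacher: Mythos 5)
Your plan takes a genuinely different route from the paper: you try to construct the invariant density directly by Ces\`aro averaging of $P^{n}\mathbf 1$ on the whole space $X$, whereas the paper first induces on $Y=[\tfrac12,1]$, obtains a bounded, fully supported, non-increasing invariant probability density $h_0$ for the induced operator $P_Y$ via a Lasota--Yorke type inequality on non-increasing densities (Lemma \ref{lem2}, inequality \eqref{eq:key}, and the resulting uniform bound \eqref{ineq1}), and only then passes back to $X$ through the standard formula $h=\sum_{n\ge0}(I_{Y^c}P)^n h_0$. That detour is not cosmetic, and this is exactly where your argument has a real gap.

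The non-degeneracy step in your second paragraph does not work as stated. When $\mu$ is infinite (which the theorem must allow), the Ces\`aro means $f_n=\tfrac1n\sum_{k<n}P^k\mathbf 1$ typically tend to $0$ pointwise on every $[\vep,1]$ even though $\lV f_n\rV_{L^1}=1$: the mass escapes into any neighbourhood of $0$. The normalisation you propose, $g_n=f_n/\int_{[\vep_0,1]}f_n\,d\lam$, keeps $g_n$ alive on $[\vep_0,1]$, but then $Pg_n-g_n=\bigl(\int_{[\vep_0,1]}f_n\,d\lam\bigr)^{-1}\tfrac1n(P^n\mathbf 1-\mathbf 1)$ no longer tends to $0$ (the normaliser blows up at a rate you have not controlled), so you cannot conclude $Ph=h$ for the Helly limit $h$. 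Your appeal to Lemma \ref{lem2-1} does not fill this gap: that lemma bounds $\int(y_{N_0+1}^{\bdsy\al,\beta}-\tfrac12)/(x_1^{\bdsy\al}-x_2^{\bdsy\al})\,d\nu$, and in the paper its sole role is to make the tail $\sum_{n\ge N_0}\vphi_n^{(\bdsy\al,\beta)}(\tfrac12)$ small inside the proof of the \emph{induced} operator inequality \eqref{eq:key}; it says nothing directly about $P^n\mathbf 1$ on $X$ and produces no Lasota--Yorke inequality for $P$ itself (indeed no such inequality can hold for $P$ when $\mu$ is infinite). The paper's device of inducing on $Y$ is precisely what converts condition \eqref{4} into the uniform bound $P_Y^n 1_Y\le C_0$ that makes the Ces\`aro/compactness argument go through; you need an analogue of that bound, and you have not supplied one.

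The rest of your outline is broadly sound where it does not rely on the problematic construction. Your observation that $P$ preserves the cone of non-increasing functions on $[0,1]$ is correct and is essentially the same monotonicity-via-convexity mechanism the paper uses inside Lemma \ref{lem2} and the proof of \eqref{D}. Your argument for \eqref{L} from \eqref{3} (the left-branch part of $P$ gives $Pf\ge M^{-1}\essinf_{[0,1/2]}f$ with $M=\esssup_\al\tau_\al'(\tfrac12)$) is a clean way to get the lower bound once one already has $h$ positive and non-increasing, and it is compatible with the paper's more granular estimates in the proof of Theorem \ref{Thm1}. Your ergodicity/conservativity paragraph is gestural (``the mixing between small scales $\dots$ forces $\lam(E)\in\{0,1\}$'' is not an argument); the paper gets ergodicity from ergodicity of each induced map $T_Y^{(\bdsy\al,\beta)}$ (via \cite[Proposition 5.1]{I}) and then quotes \cite[Remark 12]{T} and \cite[Proposition 2.1]{T2} for conservativity and ergodicity of $\mu$, and one should either cite such results or give a full argument. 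To repair your proposal, the natural fix is to carry out the Ces\`aro/Helly step for $P_Y$ rather than for $P$ --- that is, reconstruct the paper's Lemmas \ref{lem2} and \ref{lem3} --- after which the extension to $X$ and the bounds \eqref{D}, \eqref{U}, \eqref{L} follow as in your third and fourth paragraphs.
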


\begin{remark}
(I)
Since an equivalent $\sigma$-finite measure in Theorem \ref{Thm1} is conservative and ergodic, it is unique (up to a multiplicative constant) by \cite{F0}*{Theorem A in Chapter VI}.

(II)
When we do not suppose the condition \eqref{3}, there does no longer exist lower bound for $\frac{d\mu}{d\lam}$, namely the condition \eqref{L}, in general.
See Example \ref{counter} for a counterexample.
\end{remark}

We further state two theorems of Theorem \ref{Thm1} which tell us when the invariant measure becomes an infinite measure.
The first one deals with a specific case when $\mb{A}$ is a point set, from which we can show the general case as in Theorem \ref{Cor2}.

\begin{theorem}\label{Cor1}
Let $\{T_{\al,\beta};\nu_{\mb{A}},\nu_{\mb{B}}:\al\in\mb{A},\ \beta\in\mb{B}\}$ be as in Theorem \ref{Thm1} and assume \eqref{3}.
Suppose $\mb{A}=\{\al\}$ is a singleton, and set $X_n=X_n^{\bdsy{\al}}$ and $\eta(\beta)=\eta(\bdsy{\al},\beta)$ where $\bdsy{\al}=(\al,\al,\dots)$.
Then the asymptotics of the invariant measure $\mu$ given in Theorem \ref{Thm1} is of order
\[
\mu\left(X_n\right)\approx \int_{\{\beta\in\mb{B}:\eta(\beta)<n\}}\left(y_{n-\eta(\beta)}^{\beta}-\tfrac{1}{2}\right)d\nu_{\mb{B}}(\beta)
+\nu_{\mb{B}}\left\{\beta\in\mb{B}:\eta(\beta)\ge n\right\}
\]
for $n$ large enough.
\end{theorem}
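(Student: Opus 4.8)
The plan is to estimate $\mu(X_n)$ from above and below using the invariant density bounds from Theorem~\ref{Thm1} together with the structure of the induced/jump map on a suitable reference set. Since $\mb{A}=\{\al\}$, write $\bdsy{\al}=(\al,\al,\dots)$, $\tau=\tau_\al$, and recall $X_n=(x_{n+1},x_n]$ with $\tau X_n=X_{n-1}$. By the condition \eqref{D} the invariant density $h=\frac{d\mu}{d\lam}$ is non-increasing on $(0,\tfrac12)$, and by \eqref{U} and \eqref{L} it is bounded above and below by positive constants on $Y=[\tfrac12,1]$ and on every $X\setminus[0,\vep)$. First I would use invariance $Ph=h$ to write, for each $n\ge1$,
\begin{linenomath}
\begin{align*}
\mu(X_{n-1})
&=\int_{X}\mb{P}\bigl(x,X_{n-1}\bigr)\,h(x)\,d\lam(x)\\
&=\int_{\tau^{-1}X_{n-1}}h\,d\lam+\int_{\mb{B}}\int_{S_\beta^{-1}X_{n-1}}h\,d\lam\,d\nu_{\mb{B}}(\beta)\\
&=\mu(X_n)+\int_{\mb{B}}\mu\bigl(S_\beta^{-1}X_{n-1}\cap Y\bigr)\,d\nu_{\mb{B}}(\beta),
\end{align*}
\end{linenomath}
where the middle term is nonzero precisely when $X_{n-1}$ lies in the range of the right branch $S_\beta$, i.e. when $\eta(\beta)\le n-1$, in which case $S_\beta^{-1}X_{n-1}=Y_{n-\eta(\beta)}^{\beta}=(y_{n-\eta(\beta)+1}^\beta,y_{n-\eta(\beta)}^\beta]$. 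Telescoping this recursion from $n$ to $\infty$ and using $\mu(X_m)\to 0$ (which holds because $x_m\to0$ and $\mu$ is finite near any point away from $0$... more carefully, because $\sum_m \mu(X_m)$ may diverge but each individual term tends to $0$ by dominated convergence applied to $h\cdot 1_{X_m}$) gives
\begin{linenomath}
\begin{align*}
\mu(X_n)=\sum_{m>n}\ \int_{\{\beta:\eta(\beta)< m\}}\mu\bigl(Y_{m-\eta(\beta)}^\beta\bigr)\,d\nu_{\mb{B}}(\beta).
\end{align*}
\end{linenomath}

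Next I would estimate each $\mu(Y_k^\beta)$. Since $Y_k^\beta\subset Y$ and $h$ is bounded above and below on $Y$, we get $\mu(Y_k^\beta)\approx \lam(Y_k^\beta)=y_k^\beta-y_{k+1}^\beta$; summing a telescoping series, $\sum_{k\ge j}\lam(Y_k^\beta)=y_j^\beta-\tfrac12$. Interchanging the sum over $m$ with the integral over $\beta$ (justified by Tonelli, everything being non-negative) and re-indexing $k=m-\eta(\beta)$, the double sum collapses:
\begin{linenomath}
\begin{align*}
\mu(X_n)\approx \int_{\mb{B}}\ \sum_{\substack{m>n\\ \eta(\beta)<m}}\bigl(y_{m-\eta(\beta)}^\beta-y_{m-\eta(\beta)+1}^\beta\bigr)\,d\nu_{\mb{B}}(\beta)
=\int_{\mb{B}} z_n(\beta)\,d\nu_{\mb{B}}(\beta),
\end{align*}
\end{linenomath}
where for $\eta(\beta)<n$ the inner sum runs over $k=m-\eta(\beta)\ge n-\eta(\beta)+1$ and equals $y_{n-\eta(\beta)+1}^\beta-\tfrac12$, while for $\eta(\beta)\ge n$ it runs over all $k\ge1$ and equals $y_1^\beta-\tfrac12=\tfrac12$. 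Splitting $\mb{B}$ into $\{\eta(\beta)<n\}$ and $\{\eta(\beta)\ge n\}$ therefore yields
\begin{linenomath}
\begin{align*}
\mu(X_n)\approx\int_{\{\beta:\eta(\beta)<n\}}\bigl(y_{n-\eta(\beta)+1}^\beta-\tfrac12\bigr)\,d\nu_{\mb{B}}(\beta)+\tfrac12\,\nu_{\mb{B}}\{\beta:\eta(\beta)\ge n\},
\end{align*}
\end{linenomath}
and since $y_{n-\eta(\beta)+1}^\beta-\tfrac12\approx y_{n-\eta(\beta)}^\beta-\tfrac12$ up to a uniform constant (using \eqref{3}: the right branch has bounded distortion near $\tfrac12$, or more simply monotonicity plus the fact that consecutive ratios are controlled), the stated formula follows, the $\tfrac12$ being absorbed into the $\approx$.

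The main obstacle I anticipate is the uniformity of the constants in $\approx$: the upper bound on $h$ over $Y$ is clean from \eqref{U}, but I must be careful that the comparison $y_{n-\eta(\beta)+1}^\beta-\tfrac12\approx y_{n-\eta(\beta)}^\beta-\tfrac12$ holds with a constant independent of both $n$ and $\beta$. This is where \eqref{3} enters decisively: it gives $\operatorname*{ess\,sup}_\beta S_\beta'(1)<\infty$ is \emph{not} what we have, but $\operatorname*{ess\,sup}_\al\tau_\al'(\tfrac12)<\infty$ controls the gaps $x_n-x_{n+1}$ from below relative to $x_{n-1}-x_n$, hence via $S_\beta$ monotone it controls the $y_k^\beta$ gaps; I would need to phrase this as a lemma (or cite the corresponding estimate used in the proof of Theorem~\ref{Thm1}). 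A secondary technical point is justifying $\mu(X_m)\to0$ and the termwise summation when $\mu$ is infinite — but this is handled by the dominated/monotone convergence theorems applied to $h\,1_{X_m}\le h\,1_{(0,x_{m_0}]}$ for $m\ge m_0$, which is $\lam$-integrable on $(x_{m_0},1]$ while on $(0,x_{m_0}]$ one uses that the tail $\sum_{m\ge m_0}\mu(X_m)=\mu((0,x_{m_0}])$ need only be $\sigma$-finite, so individual terms still vanish. The general case $\mb{A}$ arbitrary (Theorem~\ref{Cor2}) will then follow by integrating this estimate over $\bdsy{\al}$, which is presumably the content of the next statement.
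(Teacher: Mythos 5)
Your argument takes a genuinely different route from the paper's and is, modulo one flawed step, a cleaner one. The paper reuses the two displayed pointwise bounds on $h-h_0$ built in the proof of Theorem~\ref{Thm1} (from the invariant density $h_0$ of the induced operator on $Y$ and the left-branch convexity inequalities \eqref{ineq11}--\eqref{ineq33}), and then integrates those over $X_m$. You instead integrate the stationarity relation $Ph=h$ directly over $X_{n-1}$, obtaining the renewal-type identity $\mu(X_{n-1})-\mu(X_n)=\int_{\{\eta(\beta)<n\}}\mu\bigl(Y^\beta_{n-\eta(\beta)}\bigr)\,d\nu_{\mb{B}}(\beta)$, and telescope; the only pointwise information about $h$ you then need is that it is bounded above and below on $Y$, i.e.\ conditions \eqref{U} and \eqref{L}, which turn $\mu(Y_k^\beta)$ into $\lam(Y_k^\beta)$ with uniform constants. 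This bypasses the left-branch convexity estimates entirely. The byproduct is that your computation lands on $y^\beta_{n-\eta+1}-\tfrac12$ rather than $y^\beta_{n-\eta}-\tfrac12$, so you still need the uniform comparison $y^\beta_{k+1}-\tfrac12\gtrapprox y^\beta_k-\tfrac12$; you correctly point at \eqref{3}, and it is worth spelling out the mechanism: \eqref{3} bounds $\lam(X_m)/\lam(X_{m+1})$ from above, and convexity of $S_\beta$ (equivalently, $(S_\beta^{-1})'$ non-increasing) gives $\lam(Y_k^\beta)/\lam(Y_{k+1}^\beta)\le\lam(X_{\eta(\beta)+k-1})/\lam(X_{\eta(\beta)+k})$, which yields $\lam(Y_k^\beta)\lessapprox y^\beta_{k+1}-\tfrac12$ and hence the claim. (The paper sidesteps this entirely by producing its upper and lower estimates with indices shifted in opposite directions, so that monotonicity of $k\mapsto y_k^\beta$ alone suffices in each direction.)

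The one genuine gap is the step $\mu(X_m)\to0$, which is what lets you close the telescoping sum. The dominated convergence argument you give does not work: when $\mu$ is infinite the density $h$ is not $\lam$-integrable near $0$, so $h\,1_{X_m}\le h\,1_{(0,x_{m_0}]}$ supplies no integrable dominator on $(0,x_{m_0}]$, and the parenthetical claim that ``individual terms still vanish'' because the tail is merely $\sigma$-finite is false for abstract $\sigma$-finite measures (a non-increasing density with $h(x_{m+1})\,\lam(X_m)$ bounded below from zero is a counterexample). Fortunately the conclusion is true and your own recursion proves it: the identity shows $\mu(X_m)$ is non-increasing, so $L\coloneqq\lim_m\mu(X_m)$ exists and is finite, and summing the identity over all $m\ge1$ and applying Tonelli collapses the right-hand side to $\int_{\mb{B}}\sum_{k\ge1}\mu(Y_k^\beta)\,d\nu_{\mb{B}}(\beta)=\mu\bigl((\tfrac12,1]\bigr)=\mu(X_0)$, which forces $L=0$. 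With that repair the proof is complete and otherwise correct.
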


\begin{remark}\label{rem11}
In Theorem \ref{Cor1}, if $S_{\beta}$ is surjective for $\nu_{\mb{B}}$-almost every $\beta\in\mb{B}$ then by the definition of $\eta$ we have $\eta(\beta)=0$ and the invariant measure $\mu$ is of order
\[
\mu\left(X_n\right)\approx \int_{\mb{B}}\left(y_{n+1}^{\beta}-\tfrac{1}{2}\right)d\nu_{\mb{B}}(\beta).
\]
Simultaneously, the second term $\nu_{\mb{B}}\{\beta\in\mb{B}:\eta(\beta)\ge n\}$ is negligible when $\essinf_{\beta\in\mb{B}}S_{\beta}(1)>0$ and hence $\esssup_{\beta\in\mb{B}}\eta(\beta)<\infty$
(e.g., when $\#\mb{B}<\infty$). 
\end{remark}

When $\mb{A}$ is an uncountable set, the form of the invariant density is complicated in general.
However, combining Theorem \ref{Cor1} and the comparison theorem from \cite{I4}, we can estimate the size of the $\sigma$-finite invariant measure $\mu$ in Theorem \ref{Thm1} even when $\mb{A}$ is not singleton, by reducing to the case of singleton.
In order to clarify our statement, we need to introduce the following condition.
A random piecewise convex map $\{T_{\al,\beta};\nu_{\mb{A}},\nu_{\mb{B}}:\al\in\mb{A},\ \beta\in\mb{B}\}$ is said to satisfy the condition {($\dagger$)} if there are some $c\in(0,\frac{1}{2})$ and $\al_1,\al_2\in\mb{A}$ such that
\begin{linenomath}
\begin{align*}
&\nu_{\mb{A}}\left\{\al\in\mb{A}:\tau_{\al}(0,\vep)\subset\tau_{\al_1}(0,\vep) \text{ for any } \vep\in\left(0,c\right)\right\}=1 \text{ and}\\
&\nu_{\mb{A}}\left\{\al\in\mb{A}:\tau_{\al}(0,\vep)\supset\tau_{\al_2}(0,\vep) \text{ for any } \vep\in\left(0,c\right)\right\}>0.
\end{align*}
\end{linenomath}
These conditions are of course equivalent to that
\begin{linenomath}
\begin{align*}
&\nu_{\mb{A}}\left\{\al\in\mb{A}:\tau_{\al}\le\tau_{\al_1}\text{ on } \left(0,c\right)\right\}=1 \text{ and}\\
&\nu_{\mb{A}}\left\{\al\in\mb{A}:\tau_{\al}\ge\tau_{\al_2}\text{ on } \left(0,c\right)\right\}>0.
\end{align*}
\end{linenomath}

With some abuse of notation, for a fixed $\bar{\al}\in\mb{A}$, $\{T_{\bar{\al},\beta};\nu_{\mb{B}}:\beta\in\mb{B}\}$ denotes a random piecewise convex map $\{T_{\al,\beta};\nu_{\bar{\al}},\nu_{\mb{B}}:\al\in\{\bar{\al}\},\beta\in\mb{B}\}$ where $\nu_{\bar{\al}}$ is the Dirac measure on $\bar{\al}$.

\begin{theorem}\label{Cor2}
Let $\{T_{\al,\beta};\nu_{\mb{A}},\nu_{\mb{B}}:\al\in\mb{A},\ \beta\in\mb{B}\}$ and $\mu$ be as in Theorem \ref{Thm1} with the assumption \eqref{3} and satisfy the condition {$(\dagger)$} with some $\al_1,\al_2\in\mb{A}$.
Let $\mu_i$'s be $\sigma$-finite invariant measures for random piecewise convex maps $\{T_{\al_i,\beta};\nu_{\mb{B}}:\beta\in\mb{B}\}$ $(i=1,2)$ given in Theorem \ref{Cor1}.
Then there is a constant $M>0$ such that for any $a$ and $b$ with $0\le a<b\le\frac{1}{2}$ 
\[
M^{-1}\mu_1\left([a,b]\right)
\le\mu\left([a,b]\right)
\le M\mu_2\left([a,b]\right).
\]
Consequently, if $\mu_1(X)=\infty$ then $\mu(X)=\infty$, and if $\mu_2(X)<\infty$ then $\mu(X)<\infty$.
\end{theorem}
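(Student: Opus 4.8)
The plan is to deduce the estimate from the two singleton systems $\{T_{\al_1,\beta};\nu_{\mb{B}}\}$ and $\{T_{\al_2,\beta};\nu_{\mb{B}}\}$, whose invariant measures $\mu_1$ and $\mu_2$ are controlled by Theorem \ref{Cor1}, via the comparison theorem of \cite{I4}. Writing $h=\frac{d\mu}{d\lam}$ and $h_i=\frac{d\mu_i}{d\lam}$, it is enough to exhibit one constant $M>0$ with $M^{-1}h_1\le h\le Mh_2$ $\lam$-almost everywhere on $\left(0,\tfrac12\right)$, since integration over $[a,b]$ then yields both displayed inequalities. The concluding dichotomy is then immediate: by Theorem \ref{Thm1}\eqref{U}, applied with some $\vep<\tfrac12$ to each of the three systems, $\mu,\mu_1,\mu_2$ are finite on $\left(\tfrac12,1\right]$, so, specializing the inequalities to $[a,b]=\left[0,\tfrac12\right]$, if $\mu_1(X)=\infty$ then $\mu_1\!\left(\left[0,\tfrac12\right]\right)=\infty$ and hence $\mu(X)\ge\mu\!\left(\left[0,\tfrac12\right]\right)\ge M^{-1}\mu_1\!\left(\left[0,\tfrac12\right]\right)=\infty$, whereas if $\mu_2(X)<\infty$ then $\mu(X)=\mu\!\left(\left[0,\tfrac12\right]\right)+\mu\!\left(\left(\tfrac12,1\right]\right)\le M\mu_2(X)+\mu\!\left(\left(\tfrac12,1\right]\right)<\infty$.

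First I would dispose of the region bounded away from $0$. Fixing $\vep_0\in\left(0,\tfrac12\right)$, Theorem \ref{Thm1}\eqref{U} and \eqref{L} (the latter available since \eqref{3} is assumed) applied to each of the three systems provide positive constants $c_0,C_0$ with $c_0\le h,h_1,h_2\le C_0$ on $[\vep_0,1]$, so the three densities are mutually comparable on $[\vep_0,\tfrac12]$ and only the comparison on $(0,\vep_0)$ is at stake. For the lower bound there I would use the first half of condition $(\dagger)$: for $\nu_{\mb{A}}^{\infty}$-almost every $\bdsy{\al}$ every component $\al_j$ satisfies $\tau_{\al_j}\le\tau_{\al_1}$ on $(0,c)$, equivalently $\tau_{\al_j}^{-1}\ge\tau_{\al_1}^{-1}$ near $0$ (both branches being increasing with value $0$ at $0$); an induction on the recursion defining the $x_n$'s then gives $x_n^{\bdsy{\al}}\ge x_n^{\al_1}$ for all large $n$, i.e.\ the left branch of the random system is, orbit by orbit, at least as strongly recurrent to $0$ as that of the $\al_1$-system. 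This monotone domination of the left-branch dynamics is exactly what the comparison theorem of \cite{I4} turns into an inequality between invariant measures, giving $h\ge M^{-1}h_1$ almost everywhere on $(0,\vep_0)$; equivalently one may pass to the first-return random map on $(\vep_0,1]$, compare the tail of its return time with the $\al_1$-analogue using the inequality for the $x_n$'s, and transfer back through Kac's formula and Theorem \ref{Cor1}.

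The upper bound is the delicate point, because the second half of $(\dagger)$ furnishes only a set $\mb{A}_0=\{\al:\tau_{\al}\ge\tau_{\al_2}\text{ on }(0,c)\}$ of \emph{positive} probability $p=\nu_{\mb{A}}(\mb{A}_0)$, not a full-measure one. The mechanism I would exploit is that near $0$ the left-branch orbit is non-decreasing (since $\tau_{\al}'(0)\ge1$ and $\tau_{\al}$ is convex with $\tau_{\al}(0)=0$), so a step with parameter outside $\mb{A}_0$ never pushes a point back towards $0$, while a step with parameter in $\mb{A}_0$ advances it at least as far as $\tau_{\al_2}$ would; consequently, after any $j$ steps the random orbit from $x$ has reached at least $\tau_{\al_2}^{m}(x)$, where $m$ is the number of $\mb{A}_0$-hits among the first $j$ steps. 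Since the $\al_j$'s are i.i.d.\ $\nu_{\mb{A}}$, the number of steps needed to score a given number $k$ of hits in $\mb{A}_0$ is negative binomial with mean $k/p$, so the random escape time from a neighbourhood of $0$ is, in $\nu_{\mb{A}}^{\infty}$-expectation, at most $p^{-1}$ times the $\al_2$-escape time. Passing once more to the first-return random map on $(\vep_0,1]$, this bounds the random return-time tail by a constant multiple of the $\al_2$ return-time tail, and the comparison theorem of \cite{I4} (now in the direction of domination from above) together with Theorem \ref{Cor1} for $\mu_2$ then gives $h\le Mh_2$ almost everywhere on $(0,\vep_0)$, completing the proof. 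I expect the main obstacle to be precisely this step: turning the stochastic, in-expectation bound on the random escape time into a uniform, quantitative comparison of the return-time tails that survives averaging over $\bdsy{\al}$ and that meets the precise hypotheses under which the comparison theorem of \cite{I4} applies; once that is done, the rest is bookkeeping with the objects $x_n$, $y_n$, $\eta$ already set up in \S\ref{sec2}.
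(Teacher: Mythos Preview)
Your proposal is substantially more elaborate than the paper's proof, and the extra work is exactly where you run into trouble. The paper's argument is a two-line application of the comparison theorems (Theorems 6.2 and 6.5) of \cite{I4}: the sole hypothesis to verify is that the invariant densities of $\mu$, $\mu_1$, and $\mu_2$ restricted to $Y=[\tfrac12,1]$ are bounded above and away from zero, and this is already supplied by Theorem \ref{Thm1}\eqref{U} and \eqref{L} under assumption \eqref{3}. Condition $(\dagger)$ is precisely the structural hypothesis those comparison theorems require on the left branches; once the densities on $Y$ are two-sided bounded, the comparison theorem delivers the inequality on $[0,\tfrac12]$ directly, with no need to track $x_n^{\bdsy{\al}}$ orbit by orbit or to estimate escape times.

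By contrast, you attempt to reconstruct the mechanism behind the comparison theorem: the monotone orbit domination for the lower bound and the negative-binomial waiting-time argument for the upper bound. The lower-bound sketch is plausible, but the upper-bound step is, as you yourself flag, incomplete: converting an in-expectation bound on the random escape time into a pointwise comparison of invariant densities is a genuine difficulty, and your proposal does not close that gap. In effect you are re-deriving (a special case of) the content of \cite{I4} rather than invoking it. The paper's route sidesteps this entirely by treating the comparison theorem as a black box whose input is only the two-sided boundedness of $h$, $h_1$, $h_2$ on $Y$. If you want to keep your approach self-contained, you would need to supply a rigorous version of the negative-binomial tail comparison and check that it really controls the invariant density (not just the return-time expectation); otherwise, the efficient fix is to cite \cite{I4} for the comparison and reduce your proof to the verification that $h,h_1,h_2$ are bounded above and below on $Y$.
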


\begin{remark}
(I)
In Theorem \ref{Cor2}, $\al_1$ is chosen to be a parameter for which $\tau_{\al_1}$ dominates any other $\tau_{\al}$ for $\al\in\mb{A}$ from above and $\al_2$ should be chosen in the way that $\tau_{\al_2}$ is close to $\tau_{\al_1}$ as much as possible so that the inequality becomes sharper.
For instance, see Example \ref{subsec42} for the choice of parameters.

(II)
If $\#\mb{A}<\infty$ and $\nu_{\mb{A}}(\al)>0$ for all $\al\in\mb{A}$, then one can have $\al_1=\al_2$ in Theorem \ref{Cor2}.
Similarly, if there is a parameter $\al'\in\mb{A}$ such that $\tau_{\al}\le\tau_{\al'}$ on $(0,\frac{1}{2})$ for $\nu_{\mb{A}}$-almost every $\al\in\mb{A}$ and $\nu_{\mb{A}}(\{\al'\})>0$, then both $\al_1$ and $\al_2$ in Theorem \ref{Cor2} can be taken as $\al'$.
That is, the invariant measure $\mu$ in Theorem \ref{Cor2} is of same order of $\mu_{\al'}$, where $\mu_{\al'}$ is the invariant measure for $\{T_{\al',\beta};\nu_{\mb{B}}:\beta\in\mb{B}\}$.
\end{remark}

Before proving Theorem \ref{Thm1}, we recall the key tool, called the induced operator (or the first return map in the sense of \cite{I3}), to construct an absolutely continuous $\sigma$-finite invariant measure and we also prepare lemmas.

As in the previous section, we let $Y=[\frac{1}{2},1]$ and recall (see also \cites{F,T}) that the induced operator (on $Y$) $P_Y$ is defined by
\begin{linenomath}
\begin{align}\label{eq:io}
P_Y=I_YP\sum_{n=0}^{\infty}\left(I_{Y^c}P\right)^n
\end{align}
\end{linenomath}
where $I_Y$ and $I_{Y^c}$ are the restriction operators on $Y$ (i.e., $I_Yf=1_Yf$ for each measurable function $f$) and $Y^c$, respectively.
The operator $P_Y$ is a well-defined Markov operator over $L^1(X,\lam)$ since $Y$ is a $P$-sweep-out set with respect to $\lam$ (see Lemma 4.7 in \cite{T} precisely).
The induced operator for a Markov operator is a generalization of the induced map for a non-singular map.

For $(\bdsy{\al},\beta)\in\mb{A}^{\N}\times\mb{B}$, $\mathcal{L}_{T_Y^{(\bdsy{\al},\beta)}}$ denotes the Perron--Frobenius operator associated with the induced (random) map $T_Y^{(\bdsy{\al},\beta)}x\coloneqq \tau_{\al_1}\circ\cdots\circ\tau_{\al_{n(x)}}\circ S_{\beta}x$ where $n(x)\ge1$ is the minimum number satisfying $\tau_{\al_1}\circ\cdots\circ\tau_{\al_{n(x)}}\circ S_{\beta}x\in Y$ (such $n(x)$ exists for $x\in Y\setminus\{\frac{1}{2}\}$).

\begin{lemma}\label{lem1}
The induced operator $P_Y$ defined by the equation \eqref{eq:io} satisfies
\[
P_Yf=\int_{\mb{A}^{\N}\times\mb{B}}\mathcal{L}_{T_Y^{(\bdsy{\al},\beta)}}fd\nu_{\mb{A}}^{\infty}(\bdsy{\al})\nu_{\mb{B}}(\beta)
\]
for each $f\in L^1(Y,\lam)$ with $f=0$ $\lam$-almost everywhere on $Y^c$.
\end{lemma}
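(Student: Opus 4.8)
The plan is to unfold the definition \eqref{eq:io} of $P_Y$ term by term and match each summand with the Perron--Frobenius contribution coming from orbits that return to $Y$ after exactly $n$ steps. First I would fix $f\in L^1(Y,\lam)$ supported on $Y$, and compute $I_{Y^c}P(1_{Y^c}g)$ for a generic nonnegative $g$: by the definition of $P$ and the transition function \eqref{tp}, $Pg(x)=\int_{\mb A\times\mb B}\mathcal L_{\tau_\al}g(x)\,d\nu_{\mb A}(\al)\,d\nu_{\mb B}(\beta)$ on the left half of $X$ (where only the left branch $\tau_\al$ has preimages), so that iterating $I_{Y^c}P$ exactly $n$ times on a function supported on $Y^c$ produces the operator $\int_{\mb A^n}\mathcal L_{\tau_{\al_1}}\cdots\mathcal L_{\tau_{\al_n}}(\cdot)\,d\nu_{\mb A}(\al_1)\cdots d\nu_{\mb A}(\al_n)$, restricted appropriately. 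The very first application of $P$ to $f$ (supported on $Y$) brings in the right branch $S_\beta$ as well, contributing $\mathcal L_{S_\beta}$ and, via the left branch, $\mathcal L_{\tau_\al}$.

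Next I would track which part of the mass is sent back into $Y$ at which step. The point $x\in Y\setminus\{1/2\}$ first returns to $Y$ after $n(x)+1$ iterations of the underlying map, where $n(x)\ge 1$ is as in the statement: one application of $S_\beta$ followed by $n(x)$ applications of $\tau$-branches. On the dual (density) side this corresponds, for fixed $(\bdsy\al,\beta)$, to the operator $I_Y\,\mathcal L_{\tau_{\al_1}}\cdots\mathcal L_{\tau_{\al_{n}}}\mathcal L_{S_\beta}$ acting on the part of $f$ whose image under $S_\beta$ lands in $X_n^{\bdsy\al}$ and returns at the $n$-th step, and summing over $n\ge 1$ reconstructs exactly $\mathcal L_{T_Y^{(\bdsy\al,\beta)}}f$ by the chain rule for Perron--Frobenius operators together with the partition of $Y$ according to the return time $n(x)$. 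The interchange of the sum over $n$ and the integral over $(\bdsy\al,\beta)$ is justified by Tonelli's theorem, since all the operators in sight are positive and $\sum_n\|\cdot\|$ telescopes to $\|f\|_{L^1}$ because $Y$ is a sweep-out set (this is where I would cite Lemma~4.7 of \cite{T} and the remark that $P_Y$ is a well-defined Markov operator).

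The main technical point — and the step I expect to need the most care — is the bookkeeping of the $\beta$-dependence through the shift on $\mb A^{\N}$. The induced map $T_Y^{(\bdsy\al,\beta)}$ uses $\tau_{\al_1},\dots,\tau_{\al_{n(x)}}$, i.e.\ the \emph{first} $n(x)$ coordinates of $\bdsy\al$, whereas when I iterate $I_{Y^c}P$ inside \eqref{eq:io} each factor integrates an independent copy of $\nu_{\mb A}$; because $\nu_{\mb A}^{\infty}$ is a product measure these independent integrations assemble precisely into integration of the first $n$ coordinates of $\bdsy\al$ against $\nu_{\mb A}^{\infty}$, so no genuine difficulty arises, but the identification of $\eta(\bdsy\al,\beta)$ and of the fibers $\{x\in Y: n(x)=n\}$ with the intervals $Y_n^{\bdsy\al,\beta}$ from Definition~\ref{def:y} must be done carefully to see that the supports match. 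Concretely I would verify, using conditions \eqref{1}--\eqref{2} and the relations $T_{\al,\beta}Y_{n+1}^{\bdsy\al,\beta}=X_{\eta(\bdsy\al,\beta)+n}^{\bdsy\al}$ recorded just before Figure~\ref{fig1}, that $S_\beta$ maps $Y_n^{\bdsy\al,\beta}$ into $X_{\eta(\bdsy\al,\beta)+n-1}^{\bdsy\al}$, whence exactly $\eta(\bdsy\al,\beta)+n-1$ further left-branch iterations are needed to return to $Y$; substituting this into the telescoped sum and re-indexing then yields the claimed formula. Once the supports are correctly aligned, the rest is the elementary composition rule $\mathcal L_{g\circ h}=\mathcal L_{g}\circ\mathcal L_{h}$ for non-singular maps and linearity of the integral.
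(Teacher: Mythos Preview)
Your proposal is correct, but it takes a genuinely different route from the paper. The paper's proof is essentially a one-line citation: it passes to the adjoint, writes $P_Y^{*}=\sum_{n\ge0}(P^{*}I_{Y^c})^{n}P^{*}I_Y$, and then invokes Proposition~4.1~(iv) of \cite{I3} to identify $P_Y^{*}1_A(x)$ with the transition probability of the induced random map on $Y$; duality then gives the claimed formula for $P_Y$. Your approach instead works entirely on the predual side, unfolding \eqref{eq:io} directly: you use that $f$ supported on $Y$ forces the first application of $P$ to act through $\mathcal L_{S_\beta}$ only, that subsequent factors $I_{Y^c}P$ act through $\int_{\mb A}\mathcal L_{\tau_\al}\,d\nu_{\mb A}$ only, and that the partition $\{Y_n^{\bdsy\al,\beta}\}_n$ records the first-return time so that the chain rule for Perron--Frobenius operators rebuilds $\mathcal L_{T_Y^{(\bdsy\al,\beta)}}$ after integrating against $\nu_{\mb A}^{\infty}\times\nu_{\mb B}$. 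The identification of the iterated $\nu_{\mb A}$-integrals with integration over the first coordinates of $\bdsy\al$ under the product measure is exactly the right observation, and Tonelli handles the interchange. Your argument is longer but fully self-contained, whereas the paper's buys brevity by outsourcing the combinatorics of the first-return decomposition to \cite{I3}. One small remark: in your first paragraph the phrase ``and, via the left branch, $\mathcal L_{\tau_\al}$'' is harmless but superfluous, since $\mathcal L_{\tau_\al}f\equiv0$ for $f$ supported on $Y$ (the $\tau_\al$-preimage of any point lies in $Y^c$); you use this implicitly later, so it would be cleaner to say so at the outset.
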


\begin{proof}
As in the equality \eqref{eq:io}, the induced operator on $Y$ and its adjoint operator are defined by
\[
P_Y = I_Y P \sum_{n=0}^\infty (I_{Y^C} P)^n 
\quad \text{and} \quad 
P^*_Y =  \sum_{n = 0}^{\infty}  \left({P^*}I_{Y^c}\right)^n  \left(P^* I_Y   \right).
\]
On the other hand, by Proposition 4.1 (iv) of \cite{I3}, 
$P^*_Y 1_A (x)$
equals to the transition function from $x$ into $A$ which defines the induced map on $Y$ of the original random map.
\end{proof}

We then prove the following key lemma.

\begin{lemma}\label{lem2}
Suppose the condition \eqref{4}.
If $f$ is non-negative, bounded and non-increasing on $Y$ and satisfies $f=0$ $\lam$-almost everywhere on $Y^c$, then so is $P_Yf$.
Moreover, if $\lV f\rV_{L^1}\le 1$ then there is some positive constant $K>0$, independent of $f$, such that for any $\delta\in(0,1)$ and $\lam$-almost every $x\in Y$,
\begin{linenomath}
\begin{align}\label{eq:key}
P_Yf(x)
<\delta f\left(\tfrac{1}{2}\right) + K.
\end{align}
\end{linenomath}
\end{lemma}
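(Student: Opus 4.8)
\textbf{Proof plan for Lemma \ref{lem2}.}
The plan is to exploit the explicit structure of the induced (random) map $T_Y^{(\bdsy{\al},\beta)}$ described just above Lemma \ref{lem1}. Fix $(\bdsy{\al},\beta)$ and recall that on $Y$ the map first applies the right branch $S_\beta$, landing in $X_{\eta(\bdsy{\al},\beta)+k}^{\bdsy{\al}}$ for $x$ in the corresponding interval $Y_{k+1}^{\bdsy{\al},\beta}$ (with an extra initial truncated piece), and then iterates the convex left branches $\tau_{\al_j}$ until the orbit returns to $Y$. Because each $\tau_{\al_j}$ is convex with $\tau_{\al_j}(0)=0$ and $\tau_{\al_j}'>1$, the composition $\tau_{\al_1}\circ\cdots\circ\tau_{\al_m}$ restricted to a single branch domain $X_m^{\bdsy{\al}}$ is again convex and increasing, mapping that interval onto $X_0^{\bdsy{\al}}=(\tfrac12,1]=Y$. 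Hence the induced map is, on each of its countably many monotone branches, a convex increasing bijection onto (a subinterval of) $Y$. The standard Perron--Frobenius computation for such a branch $\psi$ (convex, increasing, $\psi$ onto $Y$ from an interval $(a,b]$) gives $\mathcal{L}_\psi f(y) = f(\psi^{-1}y)\,(\psi^{-1})'(y)$, and since $\psi^{-1}$ is concave increasing, $(\psi^{-1})'$ is non-increasing; together with $f$ non-increasing this makes each term non-increasing. Summing over branches and integrating over $(\bdsy{\al},\beta)$ preserves non-negativity, and the fact that $P_Y$ is a Markov operator preserves the $L^1$ norm and boundedness; monotonicity is preserved because the branch endpoints are nested the same way the intervals $X_n^{\bdsy{\al}}$ and $Y_n^{\bdsy{\al},\beta}$ are nested (this is exactly the piecewise convex structure, as in Lasota--Yorke and Inoue). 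So the first assertion follows from carefully bookkeeping the branch structure and the concavity of the inverse.

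For the quantitative bound \eqref{eq:key}, the idea is that the only branch of $T_Y^{(\bdsy{\al},\beta)}$ that can produce a large value of $\mathcal{L}f$ near a point is the one coming from the interval $Y_{k+1}^{\bdsy{\al},\beta}$ adjacent to $\tfrac12$, because there $(\psi^{-1})'$ can be as large as (comparable to) the reciprocal of the length of the smallest $X_n^{\bdsy{\al}}$-type interval involved, and $f$ is evaluated near $f(\tfrac12)$, its supremum. Precisely, I would split $P_Y f$ into the contribution from the "first few" right-branch pieces — those $Y_{k+1}^{\bdsy{\al},\beta}$ with $k \le N_0$, where $N_0 = N_0(\delta)$ is the index supplied by Lemma \ref{lem2-1} — and the remaining tail. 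The tail contribution is estimated by $f(\tfrac12)$ times a Jacobian-type integral which, by the chain rule and the surjectivity of the compositions of $\tau$'s, is dominated by $\int \frac{y_{N_0+1}^{\bdsy{\al},\beta}-\frac12}{x_1^{\bdsy{\al}}-x_2^{\bdsy{\al}}} \, d\nu_{\mb A}^\infty d\nu_{\mb B} < \delta$, which is exactly Lemma \ref{lem2-1}; this yields the $\delta f(\tfrac12)$ term. The finitely many pieces with $k \le N_0$ contribute at most a constant $K$: here one uses $\lV f\rV_{L^1} \le 1$ together with the non-increasing property of $f$ to bound $f$ pointwise on the relevant part of $Y$ away from $\tfrac12$ (since a non-increasing probability-mass function on $Y$ is bounded at any point $y$ by roughly $1/(y-\tfrac12)$), plus boundedness of the Jacobians away from $\tfrac12$, which follows from condition \eqref{4} controlling $1/(x_1-x_2^{\bdsy\al})$ in $\nu_{\mb A}$-mean. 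Combining gives $P_Y f(x) < \delta f(\tfrac12) + K$ for a.e.\ $x \in Y$, with $K$ independent of $f$ and of $\delta$.

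The main obstacle I anticipate is making the decomposition of $P_Y f$ along branches rigorous and uniform in $x$: one must show that for $\lam$-a.e.\ $x \in Y$, the sum over all induced-map branches through $x$ of the "large" Jacobian factors is controlled by the single integral appearing in Lemma \ref{lem2-1}, and that the "bounded" part is genuinely bounded by a constant independent of $x$ (not just locally). This requires tracking how the return-time partition $\{Y_n^{\bdsy{\al},\beta}\}$ and the forward images $X_{\eta(\bdsy\al,\beta)+n}^{\bdsy\al}$ interleave, and using the convexity condition \eqref{2} to get that all the relevant inverse-branch derivatives $(\psi^{-1})'(y)$ are bounded by their value at the right endpoint, which corresponds to $\tau'_{\al}$-products evaluated at interval endpoints and ultimately to $1/(x_1^{\bdsy\al} - x_2^{\bdsy\al})$-type quantities. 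Once this bookkeeping is set up, the estimate \eqref{eq:key} falls out by combining Lemma \ref{lem2-1} with a crude bound on the finitely many leading terms.
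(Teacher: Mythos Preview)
Your proposal is correct and follows essentially the same approach as the paper's proof: write $P_Yf$ via Lemma \ref{lem1} as an integral of branch contributions $\sum_n \varphi_n^{(\bdsy{\al},\beta)} f\circ(\text{inverse branch})$, use convexity to see each $\varphi_n^{(\bdsy{\al},\beta)}$ is non-increasing (giving the first assertion), then split the sum at $N_0$ from Lemma \ref{lem2-1}, bounding the tail by $\delta f(\tfrac12)$ via the telescoped derivative estimate $\sum_{n\ge N_0}\varphi_n^{(\bdsy{\al},\beta)}(\tfrac12)\le (y_{N_0+1}^{\bdsy{\al},\beta}-\tfrac12)/(x_1-x_2^{\bdsy{\al}})$ and the head by a constant using $f(y)\le (y-\tfrac12)^{-1}$.

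Two small remarks. First, the ``uniformity in $x$'' obstacle you flag dissolves immediately once you have the first assertion: since $P_Yf$ is non-increasing on $Y$, it suffices to bound $P_Yf(\tfrac12)$, and the paper does exactly this. Second, your claim that $K$ is independent of $\delta$ is stronger than what the paper's argument yields: $K$ there is $\int\sum_{n<N_0}\varphi_n^{(\bdsy{\al},\beta)}(\tfrac12)/(y_{n+1}^{\bdsy{\al},\beta}-\tfrac12)\,d\nu_{\mb A}^\infty d\nu_{\mb B}$, which depends on $N_0=N_0(\delta)$. This does not affect the application in Lemma \ref{lem3}, where $\delta$ is fixed once and for all.
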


\begin{proof}
We follow the proof of Proposition 5.1 in \cite{I}.
Let $X_n^{\bdsy{\al}}\coloneqq (x_{n+1}^{\bdsy{\al}},x_n^{\bdsy{\al}}]$ and $Y_n^{\bdsy{\al},\beta}\coloneqq (y_{n+1}^{\bdsy{\al},\beta},y_n^{\bdsy{\al},\beta}]$ as before.
Then for each $(\bdsy{\al},\beta)\in\mb{A}^{\N}\times\mb{B}$, the induced map $T_Y^{(\bdsy{\al},\beta)}$ is piecewise convex such that
$T_Y^{(\bdsy{\al},\beta)}\vert_{Y_1^{\bdsy{\al},\beta}}=\tau_{\al_1}\circ\cdots\circ\tau_{\al_{\eta(\bdsy{\al},\beta)}}\circ S_{\beta}\vert_{Y_1^{\bdsy{\al},\beta}}$ maps from $Y_1^{\bdsy{\al},\beta}$ onto $[\frac{1}{2},T_Y^{(\bdsy{\al},\beta)}(1)]\subset Y$
and
$T_Y^{(\bdsy{\al},\beta)}\vert_{Y_{n+1}^{\bdsy{\al},\beta}}=\tau_{\al_1}\circ\cdots\circ\tau_{\eta(\bdsy{\al},\beta)+n}\circ S_{\beta}\vert_{Y_{n+1}^{\bdsy{\al},\beta}}$ maps from $Y_{n+1}^{\bdsy{\al},\beta}$ onto $Y$
for $n\ge1$ by construction.

If we set
\[
\vphi_1^{(\bdsy{\al},\beta)}(x) =
\begin{cases}
\dfrac{1}{\left(T_Y^{(\bdsy{\al},\beta)}\right)'\circ \left(T_Y^{(\bdsy{\al},\beta)}\big\vert_{Y_1^{\bdsy{\al},\beta}}\right)^{-1}(x)} & \left(x\in T_Y^{(\bdsy{\al},\beta)}\left(Y_1^{\bdsy{\al},\beta}\right)\right),\\
0 & \left(\text{otherwise}\right)
\end{cases}
\]
and
\[
\vphi_{n+1}^{(\bdsy{\al},\beta)}(x) = \frac{1}{\left(T_Y^{(\bdsy{\al},\beta)}\right)'\circ \left(T_Y^{(\bdsy{\al},\beta)}\big\vert_{Y_{n+1}^{\bdsy{\al},\beta}}\right)^{-1}(x)}
\quad(x\in X)
\]
for $(\bdsy{\al},\beta)\in\mb{A}^{\N}\times\mb{B}$ and $n\ge1$,
then $\vphi_n^{(\bdsy{\al},\beta)}$ is non-increasing on $Y$ for each $(\bdsy{\al},\beta)\in\mb{A}^{\N}\times\mb{B}$ and $n\ge1$.
Since for any non-negative and non-increasing function $f$ on $Y$ we have
\begin{linenomath}
\begin{align*}
P_Yf
&=
\int_{\mb{A}^{\N}\times\mb{B}} 
\Bigg(\sum_{n\ge2}\vphi_n^{(\bdsy{\al},\beta)}f \circ \left(T_Y^{(\bdsy{\al},\beta)}\big\vert_{Y_{n}^{\bdsy{\al},\beta}}\right)^{-1}\\
&\qquad\qquad\qquad+\vphi_1^{(\bdsy{\al},\beta)}f \circ \left(T_Y^{(\bdsy{\al},\beta)}\big\vert_{Y_1^{\bdsy{\al},\beta}}\right)^{-1}1_{T_Y^{(\bdsy{\al},\beta)}\left(Y_1^{\bdsy{\al},\beta}\right)}\Bigg) d\nu_{\mb{A}}^{\infty}(\bdsy{\al})\nu_{\mb{B}}(\beta)
\end{align*}
\end{linenomath}
from Lemma \ref{lem1},
$P_Yf$ is also non-negative and non-increasing and the former part of the lemma is proven.

Now from the convexity of $T_{\al,\beta}$ we can easily see that
\begin{linenomath}
\begin{align}\label{eq111}
T_{\al_{\eta(\bdsy{\al},\beta)+n},\beta}'\big\vert_{Y_n^{\bdsy{\al},\beta}}\ge\frac{\lam\left(X_{\eta(\bdsy{\al},\beta)+n}^{\bdsy{\al}}\right)}{\lam\left(Y_{n+1}^{\bdsy{\al},\beta}\right)}
\quad\text{and}\quad
T_{\al_{n},\beta}'\big\vert_{X_n^{\bdsy{\al}}}\ge\frac{\lam\left(X_n^{\bdsy{\al}}\right)}{\lam\left(X_{n+1}^{\bdsy{\al}}\right)}
\end{align}
\end{linenomath}
for any $(\bdsy{\al},\beta)\in\mb{A}^{\N}\times\mb{B}$ and $n\ge1$.
Thus it follows from \eqref{eq111} that for any $(\bdsy{\al},\beta)\in\mb{A}^{\N}\times\mb{B}$ and $n\ge 1$
\begin{linenomath}
\begin{align*}
\left(T_Y^{(\bdsy{\al},\beta)}\right)'\bigg\vert_{Y_n^{\bdsy{\al},\beta}}
&=T'_{\al_{\eta(\bdsy{\al},\beta)+n},\beta}\big\vert_{Y_n^{\bdsy{\al},\beta}}\prod_{k=1}^{\eta(\bdsy{\al},\beta)+n-1}T'_{\al_{\eta(\bdsy{\al},\beta)+n-k},\beta}\big\vert_{X_{\eta(\bdsy{\al},\beta)+n-k}}\\
&\qquad\qquad\qquad\qquad\qquad\qquad\qquad\qquad\circ T_{\al_{\eta(\bdsy{\al},\beta)+n-k+1},\beta}\circ\cdots\circ T_{\al_{\eta(\bdsy{\al},\beta)+n-1},\beta}\\
&\ge\frac{\lam\left(X_1^{\bdsy{\al}}\right)}{\lam\left(Y_{n+1}^{\bdsy{\al},\beta}\right)}.
\end{align*}
\end{linenomath}
Then it holds for each $N\ge1$ that
\begin{linenomath}
\begin{align*}
\sum_{n=N}^{\infty}\vphi_n^{(\bdsy{\al},\beta)}\left(\tfrac{1}{2}\right)
&=\sum_{n= N}^{\infty}\frac{1}{\left(T_Y^{(\bdsy{\al},\beta)}\right)'\left(y_{n+1}^{\bdsy{\al},\beta}\right)}\\
&\le\sum_{n= N}^{\infty}\frac{\lam\left(Y_{n+1}^{\bdsy{\al},\beta}\right)}{\lam\left(X_1^{\bdsy{\al}}\right)}\\
&=\frac{y_{N+1}^{\bdsy{\al},\beta}-\frac{1}{2}}{x_1^{\bdsy{\al}}-x_2^{\bdsy{\al}}}.
\end{align*}
\end{linenomath}
By Lemma \ref{lem2-1}, for any fixed $0<\delta<1$ there exists $N_0\ge1$ such that we have
\[
\int_{\mb{A}^{\N}\times\mb{B}}\sum_{n=N_0}^{\infty}\vphi_n^{(\bdsy{\al},\beta)}\left(\tfrac{1}{2}\right)d\nu_{\mb{A}}^{\infty}(\bdsy{\al})\nu_{\mb{B}}(\beta)<\delta.
\]
Since any non-increasing density function on $Y$ cannot exceed $(x-\frac{1}{2})^{-1}$ (see also \cite[Step III in Proof of Theorem 4]{LY}), it holds that, for any non-negative, bounded and non-increasing function $f$ on $Y$ with $\lV f\rV_{L^1}\le 1$,
\begin{linenomath}
\begin{align*}
P_Yf(x)&\le P_Yf\left(\tfrac{1}{2}\right)\\
&=\int_{\mb{A}^{\N}\times\mb{B}}\left( \sum_{n= N_0}^{\infty}\vphi_n^{(\bdsy{\al},\beta)}\left(\tfrac{1}{2}\right)f\left(y_{n+1}^{\bdsy{\al},\beta}\right)+\sum_{n=1}^{N_0-1}\vphi_n^{(\bdsy{\al},\beta)}\left(\tfrac{1}{2}\right)f\left(y_{n+1}^{\bdsy{\al},\beta}\right) \right)d\nu_{\mb{A}}^{\infty}(\bdsy{\al})d\nu_{\mb{B}}(\beta)\\
&<\delta f\left(\tfrac{1}{2}\right) + \int_{\mb{A}^{\N}\times\mb{B}}\sum_{n=1}^{N_0-1}\frac{\vphi_n^{(\bdsy{\al},\beta)}\left(\frac{1}{2}\right)}{y_{n+1}^{\bdsy{\al},\beta}-\tfrac{1}{2}}d\nu_{\mb{A}}^{\infty}(\bdsy{\al})d\nu_{\mb{B}}(\beta)
\end{align*}
\end{linenomath}
for $\lam$-almost every $x\in Y$.
Therefore, putting $K=\int_{\mb{A}^{\N}\times\mb{B}}\sum_{n=1}^{N_0-1}\frac{\vphi_n^{(\bdsy{\al},\beta)}(\frac{1}{2})}{y_{n+1}^{\bdsy{\al},\beta}-\frac{1}{2}}d\nu_{\mb{A}}^{\infty}(\bdsy{\al})d\nu_{\mb{B}}(\beta)<\infty$,
we have obtained the inequality \eqref{eq:key}.
\end{proof}

We now emphasize that the left branches $\tau_{\al}$'s map points surjectively onto $[0,1]$.
This together with the condition \eqref{4} guarantees that an invariant density for the induced operator $P_Y$ is fully supported on $Y$ as well as bounded above.
Furthermore, \eqref{3} ensures the invariant density to be bounded away from zero on $Y$.
Henceforth $\lam\vert_Y$ denotes the measure $\lam$ restricted on $Y$.

\begin{lemma}\label{lem3}
Under the assumption \eqref{4}, the induced operator $P_Y$ is ergodic with respect to the Lebesgue measure $\lam$ and admits a unique $\lam\vert_Y$-equivalent invariant probability measure whose density is non-increasing and bounded above on $Y$.
Moreover, if we assume \eqref{3} then the density function is bounded away from zero on $Y$.
\end{lemma}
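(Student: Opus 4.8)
The plan is to realize the invariant density of $P_Y$ as a fixed point of $P_Y$ inside the cone
\[
\mathcal{C}\coloneqq\left\{f\in L^1(X,\lam):f\ge0,\ f\text{ non-increasing on }Y,\ f=0\ \lam\text{-a.e.\ on }Y^c\right\},
\]
using Lemma \ref{lem2} for existence and the upper bound, and then to establish full support, ergodicity and, under \eqref{3}, the lower bound. By Lemma \ref{lem2}, $P_Y(\mathcal{C})\subseteq\mathcal{C}$ and, for $f\in\mathcal{C}$ with $\lV f\rV_{L^1}\le1$, $P_Yf(x)<\delta f(\tfrac12)+K$ for $\lam$-a.e.\ $x\in Y$ and every $\delta\in(0,1)$. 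Fix such a $\delta$, set $M\coloneqq\max\{2,K/(1-\delta)\}$, and let $\mathcal{C}_M\coloneqq\{f\in\mathcal{C}:\lV f\rV_{L^1}=1,\ f\le M\ \lam\text{-a.e.}\}$. This set is convex, contains $f_0\coloneqq2\cdot1_Y$, and is $P_Y$-invariant (for $f\in\mathcal{C}_M$, $P_Yf$ is non-increasing with $P_Yf\le\delta f(\tfrac12)+K\le\delta M+K\le M$ and $\lV P_Yf\rV_{L^1}=1$). Hence the Ces\`aro averages $A_Nf_0\coloneqq\tfrac1N\sum_{n=0}^{N-1}P_Y^nf_0$ lie in $\mathcal{C}_M$; being uniformly bounded and non-increasing, Helly's selection theorem gives a subsequence converging pointwise, hence (dominated convergence) in $L^1$, to some $h\in\mathcal{C}_M$. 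Since $\lV P_YA_Nf_0-A_Nf_0\rV_{L^1}\le\tfrac2N\to0$ and $P_Y$ is an $L^1$-contraction, $P_Yh=h$. Thus $h$ is a non-increasing invariant density on $Y$ with $0\le h\le M$, and (reapplying Lemma \ref{lem2} to $h$ and taking essential suprema) the sharper bound $h(\tfrac12)\le K/(1-\delta)$ holds as well.

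For full support, write $\{h>0\}=[\tfrac12,a)$ mod $\lam$ for some $a\in(\tfrac12,1]$ (possible since $h\in\mathcal{C}$ is non-increasing and $\lV h\rV_{L^1}=1$) and suppose $a<1$. For $\nu_{\mb{A}}^{\infty}\times\nu_{\mb{B}}$-a.e.\ $(\bdsy{\al},\beta)$ one has $y_n^{\bdsy{\al},\beta}\downarrow\tfrac12$, so the increasing events $\{(\bdsy{\al},\beta):y_n^{\bdsy{\al},\beta}<a\}$ have measure tending to $1$ and there is $n_0\ge2$ with $\nu_{\mb{A}}^{\infty}\times\nu_{\mb{B}}\{(\bdsy{\al},\beta):y_{n_0}^{\bdsy{\al},\beta}<a\}>0$. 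On that event $(T_Y^{(\bdsy{\al},\beta)}\vert_{Y_{n_0}^{\bdsy{\al},\beta}})^{-1}(x)\in Y_{n_0}^{\bdsy{\al},\beta}\subseteq(\tfrac12,a)$ for all $x\in Y$, so $h$ is positive there $\lam$-a.e.; since also $\vphi_{n_0}^{(\bdsy{\al},\beta)}(x)>0$ for $\lam$-a.e.\ $x$ (the Jacobian of $T_Y^{(\bdsy{\al},\beta)}\vert_{Y_{n_0}^{\bdsy{\al},\beta}}$ is finite on the interior), the representation of $P_Y$ in the proof of Lemma \ref{lem2} gives, for $\lam$-a.e.\ $x\in Y$,
\[
h(x)=P_Yh(x)\ \ge\ \int_{\{y_{n_0}^{\bdsy{\al},\beta}<a\}}\vphi_{n_0}^{(\bdsy{\al},\beta)}(x)\,h\!\left(\left(T_Y^{(\bdsy{\al},\beta)}\big\vert_{Y_{n_0}^{\bdsy{\al},\beta}}\right)^{-1}(x)\right)d\nu_{\mb{A}}^{\infty}(\bdsy{\al})\,\nu_{\mb{B}}(\beta)\ >\ 0,
\]
contradicting $a<1$. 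Hence $h>0$ $\lam$-a.e.\ on $Y$, i.e.\ $\mu_Y\coloneqq h\,\lam\vert_Y$ is equivalent to $\lam\vert_Y$.

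For ergodicity (whence uniqueness, since $\mu_Y$ is a finite invariant measure), let $E$ be a $P_Y$-invariant set. From $P_Y^*1_E=1_E$ one reads, using the identification of $P_Y^*1_E$ with the transition function of the induced map (Lemma \ref{lem1}), that for $\nu_{\mb{A}}^{\infty}\times\nu_{\mb{B}}$-a.e.\ $(\bdsy{\al},\beta)$ both $E\cap Y$ and $E^c\cap Y$ are forward invariant mod $\lam$ under $T_Y^{(\bdsy{\al},\beta)}$. Since every left branch $\tau_{\al}$ is onto $[0,1]$, each branch $T_Y^{(\bdsy{\al},\beta)}\vert_{Y_n^{\bdsy{\al},\beta}}$ with $n\ge2$ is a bijection of $Y_n^{\bdsy{\al},\beta}$ onto $Y$, forcing $T_Y^{(\bdsy{\al},\beta)}(E\cap Y_n^{\bdsy{\al},\beta})=E\cap Y$ and $T_Y^{(\bdsy{\al},\beta)}(E^c\cap Y_n^{\bdsy{\al},\beta})=E^c\cap Y$ mod $\lam$ for all $n\ge2$; thus both $E\cap Y$ and $E^c\cap Y$ are, through the increasing convex inverse branches, reproduced inside every $Y_n^{\bdsy{\al},\beta}$. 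Using the monotonicity of the Jacobians $\vphi_n^{(\bdsy{\al},\beta)}$ that follows from convexity of $T_Y^{(\bdsy{\al},\beta)}$ — exactly as in the deterministic analysis of piecewise convex maps in \cite{LY} and \cite{I} — one rules out $0<\lam(E\cap Y)<\lam(Y)$; equivalently, one can work with the invariant densities $h1_E/\mu_Y(E)$, $h1_{E^c}/\mu_Y(E^c)$ and their smallest non-increasing majorants (which are again $P_Y$-invariant by Lemma \ref{lem2} and positivity of $P_Y$) and invoke the full support from the previous paragraph to reach a contradiction. This gives ergodicity of $P_Y$ with respect to $\lam$ and uniqueness of the $\lam\vert_Y$-equivalent invariant probability. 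I expect this step to be the main obstacle: in contrast to random LSV-type maps, the induced maps $T_Y^{(\bdsy{\al},\beta)}$ are in general neither uniformly expanding nor of bounded distortion (the right branches may contract strongly on average), so no Lasota--Yorke inequality or Doeblin-type minorization is available, and the convexity/monotonicity argument has to be carried out by hand.

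Finally, under \eqref{3} every left branch satisfies $\tau'_{\al}\le\esssup_{\al\in\mb{A}}\tau'_{\al}(\tfrac12)<\infty$ on $[0,\tfrac12]$ (since $\tau'_{\al}$ is non-decreasing), which bounds the expansion of the first branches $T_Y^{(\bdsy{\al},\beta)}\vert_{Y_1^{\bdsy{\al},\beta}}$ near the break point $x_1^{\bdsy{\al}}=\tfrac12$, hence bounds $\vphi_1^{(\bdsy{\al},\beta)}(\tfrac12)$ away from $0$; feeding this into $h=P_Yh$ and using that $h$ is non-increasing and, by the above, strictly positive on $Y$, one obtains $h\ge c$ on $Y$ for some $c>0$, again following \cite{LY} and \cite{I}. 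Assembling these pieces proves the lemma.
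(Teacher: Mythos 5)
Your existence and full-support arguments are sound and in fact a bit more explicit than the paper's: you realize the invariant density as a Ces\`aro/Helly limit inside the compact convex set $\mathcal{C}_M$ of uniformly bounded non-increasing densities, whereas the paper directly quotes a general abstract result for Markov operators (Theorem 3.1 and Proposition 3.9 of \cite{T}) to extract $h_0=\frac{1}{\lam(Y)}\lim_N\frac{1}{N}\sum_{i<N}P_Y^i 1_Y$. Your ``push a small surjective branch $Y_{n_0}\subset(\frac{1}{2},a)$ onto $Y$'' step for full support is exactly the mechanism the paper exploits, just spelled out more carefully. So for the existence, the upper bound, and full support, you have a valid alternative.

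The genuine gap is ergodicity, which you yourself flag as ``the main obstacle'' and do not close. Neither of your two sketches works as stated. The route ``restrict $h$ to $E$ and $E^c$, pass to their smallest non-increasing majorants, and invoke full support'' is circular: the majorants are indeed $P_Y$-invariant (the argument you give is correct), but full support of \emph{both} majorants is not a contradiction --- having two fully supported non-increasing invariant densities does not by itself contradict anything until you already know ergodicity, which gives uniqueness. The other route, ``carry out the convexity/monotonicity argument of \cite{LY} and \cite{I} by hand,'' is essentially what the paper uses, but the paper does not rederive it; it observes that for \emph{every fixed} $(\bdsy{\al},\beta)$ the deterministic induced map $T_Y^{(\bdsy{\al},\beta)}$ is a piecewise convex map satisfying the hypotheses of Proposition 5.1 in \cite{I}, hence ergodic (even exact) for $\lam\vert_Y$. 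Since $P_Y^*1_D=1_D$ forces $1_D\circ T_Y^{(\bdsy{\al},\beta)}=1_D$ for $\nu_{\mb{A}}^{\infty}\times\nu_{\mb{B}}$-a.e.\ $(\bdsy{\al},\beta)$ (the integrand is $\{0,1\}$-valued, so equality of the integral with a $\{0,1\}$-valued limit pins it down a.e.), each invariant set is then trivial by the deterministic ergodicity. This sidesteps precisely the difficulty you worried about (no Lasota--Yorke inequality, no Doeblin minorization for $P_Y$): one never needs uniform expansion or distortion control for the annealed operator, only ergodicity of each deterministic realization, which is a one-map piecewise convex fact already in \cite{I}.

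A secondary issue is the lower bound under \eqref{3}. You locate the role of \eqref{3} at $\vphi_1^{(\bdsy{\al},\beta)}(\frac{1}{2})$, but that is the non-surjective branch and the wrong endpoint; what one actually needs is $\vphi_n^{(\bdsy{\al},\beta)}(x)$ bounded away from $0$ for $x$ near $1$ and for the surjective branches $n\ge2$ whose domains lie in the region $[\frac{1}{2},\frac{1}{2}+\frac{1}{C_1}]$ where $h_0\ge2$. Concretely, the right endpoint $y_n^{\bdsy{\al},\beta}$ of $Y_n^{\bdsy{\al},\beta}$ is pushed through $x_{\eta+n-1}^{\bdsy{\al}},\dots,x_1^{\bdsy{\al}}=\frac{1}{2}$ by the $\tau$'s, so the chain rule for $(T_Y^{(\bdsy{\al},\beta)})'(y_n^{\bdsy{\al},\beta})$ contains the single factor $\tau_{\al_1}'(\frac{1}{2})$, and \eqref{3} caps exactly that factor; combined with the mean-value upper bounds coming from convexity, one gets $\vphi_n^{(\bdsy{\al},\beta)}(1)\gtrapprox\lam(Y_{n-1}^{\bdsy{\al},\beta})$, and the series $\sum_{n\ge N_0}\vphi_n^{(\bdsy{\al},\beta)}(1)$ is bounded below by a multiple of $y_{N_0-1}^{\bdsy{\al},\beta}-\frac{1}{2}$. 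Your heuristic that ``\eqref{3} controls derivatives'' is correct, but as written the step misidentifies both the branch and the endpoint, and you again defer to \cite{LY} and \cite{I} rather than carrying it out.
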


\begin{proof}
First of all, ergodicity follows from the following argument.
For each $(\bdsy{\al},\beta)\in\mb{A}^{\N}\times\mb{B}$, the map $T_Y^{(\bdsy{\al},\beta)}$ satisfies the conditions in \cite[Proposition 5.1]{I} and is ergodic (or moreover exact) with respect to $\lam\vert_Y$.
If $D$ is an invariant set for $P_Y$, then
\[
P^*_Y1_D=\int_{\mb{A}^{\N}\times\mb{B}}1_{D}\circ T_Y^{(\bdsy{\al},\beta)}d\nu_{\mb{A}}^{\infty}(\bdsy{\al})d\nu_{\mb{B}}(\beta)=1_D.
\]
Thus $D$ is a $T_Y^{(\bdsy{\al},\beta)}$-invariant set for $\nu_{\mb{A}}^{\infty}\times\nu_{\mb{B}}$-almost everywhere.
Now it is straightforward to see that $D=\emptyset$ or $X\pmod{\lam}$ since each $T_Y^{(\bdsy{\al},\beta)}$ is ergodic.

From Lemma \ref{lem2}, $P_Y^n1_Y$ is non-increasing for $n\ge0$ and we apply \eqref{eq:key} repeatedly to get for any fixed $\delta\in(0,1)$ and for $x\in Y$
\[
P_Y^n1_Y(x)<\delta P_Y^{n-1}1_Y\left(\tfrac{1}{2}\right) + K
<\delta^2P_Y^{n-2}1_Y\left(\tfrac{1}{2}\right) + \delta K+K
\]
and so on.
Eventually, we have for $n\ge1$
\begin{linenomath}
\begin{align}\label{ineq1}
P_Y^n1_Y<\delta^n+\frac{K}{1-\delta},
\end{align}
\end{linenomath}
that is, $P_Y^n1_Y$ is bounded above by $C_0\coloneqq 1+K(1-\delta)^{-1}<\infty$ for any $n\ge1$.
Therefore, by \cite[Theorem 3.1 and Proposition 3.9]{T}, the limiting point
\[
h_0\coloneqq \frac{1}{\lam(Y)}\lim_{n\to\infty}\frac{1}{n}\sum_{i=0}^{n-1}P_Y^i1_Y
=2\lim_{n\to\infty}\frac{1}{n}\sum_{i=0}^{n-1}P_Y^i1_Y
\]
exists and is an invariant density of $P_Y$, which is conservative and ergodic implying uniqueness of the invariant density.

We then show this $h_0$ satisfies the conditions in the statement of the lemma.
From Lemma \ref{lem2} and \eqref{ineq1}, $h_0$ is non-increasing and bounded above by $C_1\coloneqq 2C_0$ on $Y$.
For the lower bound, notice that from the fact that $P_Y^n1_Y$ is non-increasing and the above inequality (\ref{ineq1})
\[
P_Y^n1_Y\ge 1
\]
on $[\frac{1}{2},\frac{1}{2}+\frac{1}{C_1}]$ for $n\ge 1$ so that
\begin{linenomath}
\begin{align}\label{112}
h_0\ge 2 \quad\text{on } \left[\tfrac{1}{2},\tfrac{1}{2}+\tfrac{1}{C_1}\right].
\end{align}
\end{linenomath}
On the other hand, it follows from the Lebsgue dominated convergence theorem (see also the proof to Lemma \ref{lem2-1}), we get $N_0\ge2$ such that
\[
\int_{\mb{A}^{\N}\times\mb{B}}\left(y_{N_0}^{\bdsy{\al},\beta}-\tfrac{1}{2}\right)d\nu_{\mb{A}}^{\infty}(\bdsy{\al})d\nu_{\mb{B}}(\beta)
<\frac{1}{C_1}.
\]
We define
\[
E\coloneqq\left\{(\bdsy{\al},\beta)\in\mb{A}^{\N}\times\mb{B}:\bigcup_{n=N_0}^{\infty}Y_n^{\bdsy{\al},\beta}\subset\left[\frac{1}{2},\frac{1}{2}+\frac{1}{C_1}\right]\right\}.
\]
Then since it holds that
\[
y_{N_0}^{\bdsy{\al},\beta}-\frac{1}{2}=\lam\left(\bigcup_{n=N_0}^{\infty}Y_n^{\bdsy{\al},\beta}\right),
\]
we have $\nu_{\mb{A}}^{\infty}\times\nu_{\mb{B}}(E)>0$.
Combining \eqref{112} with the above argument, we have
\[
h_0\ge2\sum_{n=N_0}^{\infty}1_{Y_n^{\bdsy{\al},\beta}}
\quad\text{for any }(\bdsy{\al},\beta)\in E.
\]

Thus, taking $\vphi_n$ defined in the proof to Lemma \ref{lem2} into account, we have by Lemma \ref{lem1}
\begin{linenomath}
\begin{align*}
h_0&=P_Y h_0=
\int_{\mb{A}^{\N}\times\mb{B}} \mathcal{L}_{T_{Y}^{(\bdsy{\al},\beta)}}h_0 d\nu_{\mb{A}}^{\infty}(\bdsy{\al})d\nu_{\mb{B}}(\beta)\\
&\ge2\int_{E} \sum_{n=N_0}^{\infty}\mathcal{L}_{T_{Y}^{(\bdsy{\al},\beta)}}1_{Y_n^{(\bdsy{\al},\beta)}} d\nu_{\mb{A}}^{\infty}(\bdsy{\al})d\nu_{\mb{B}}(\beta)\\
&= 2\int_{E}\sum_{n= N_0}^{\infty}\vphi_n^{(\bdsy{\al},\beta)}d\nu_{\mb{A}}^{\infty}(\bdsy{\al})d\nu_{\mb{B}}(\beta)
\end{align*}
\end{linenomath}
on $Y$ since $T_{Y}^{(\bdsy{\al},\beta)}\vert_{Y_n^{(\bdsy{\al},\beta)}}$ is surjective for each $n\ge2$.
The conditions \eqref{1} and \eqref{2} imply that
\[
C_2 (x)\coloneqq 2\int_{E}\sum_{n= N_0}^{\infty}\vphi_n^{(\bdsy{\al},\beta)}(x)d\nu_{\mb{A}}^{\infty}(\bdsy{\al})d\nu_{\mb{B}}(\beta)>0
\]
$\lam\vert_Y$-almost every $x\in Y$.
Therefore, we conclude $C_1\ge h_0(x)\ge C_2(x)$ on $Y$.
Moreover, if we assume \eqref{3} then $\essinf_{x\in Y} C_2(x)>0$.
The proof is completed.
\end{proof}

\begin{proof}[Proof of Theorem \ref{Thm1}]
The well-known formula of invariant measures via the induced operators (see Proposition 4.14 in \cite{T} for example) shows that
\[
h=\sum_{n=0}^{\infty}(I_{Y^c}P)^nh_0
=h_0+I_{Y^c}\sum_{n=0}^{\infty}(PI_{Y^c})^nPh_0.
\]
gives an invariant density function of an absolutely continuous $\sigma$-finite invariant measure $\mu$ for $P$ where $h_0$ is the invariant density of $P_Y$ obtained in Lemma \ref{lem3}.
Then it follows from the fact that $h_0$ is supported on $Y$ that
\begin{linenomath}
\begin{align}\label{eqform}
h=h_0+\int_{\mb{A}^{\N}\times\mb{B}}\sum_{n=0}^{\infty}I_{Y^c}\mathcal{L}_{\tau_{\al_0}}\cdots\mathcal{L}_{\tau_{\al_{n-1}}}\mathcal{L}_{S_{\beta}}h_0 d\nu_{\mb{A}}^{\infty}(\al_0,\al_1,\dots)d\nu_{\mb{B}}(\beta).
\end{align}
\end{linenomath}
Since $\tau_{\al}$ for each $\al\in\mb{A}$ is surjective and the support of $h_0$ is $Y$ up to $\lam$-measure zero sets, $h$ is evidently fully supported on $X$ and thus the invariant measure is equivalent to $\lam$.
Now that $h_0$ is non-increasing and so is $(PI_{Y^c})^nh_0$ for $n\ge0$ from the similar argument of the proof to Lemma \ref{lem2} together with the assumption \eqref{2}, we have \eqref{D}.
Then \eqref{U} follows from \eqref{D} and the fact that $\mu$ is $\sigma$-finite.

If we assume \eqref{3}, then we have $C^{-1}1_Y\le h_0\le C1_Y$ for some $C\ge1$ by Lemma \ref{lem3}.
Or equivalently, for each $\bdsy{\al}\in\mb{A}^{\N}$ and $\beta\in\mb{B}$
\[
C^{-1}\sum_{n=1}^{\infty}1_{Y_n^{\bdsy{\al},\beta}}\le h_0 \le C\sum_{n=1}^{\infty}1_{Y_n^{\bdsy{\al},\beta}}.
\]
Note that for the bound above (or the desired consequence \eqref{U}), we only need the condition \eqref{4}.

We first observe that for $n\ge2$ and $\beta\in\mb{B}$,
\begin{linenomath}
\begin{align*}
\mathcal{L}_{T_{\al,\beta}}1_{Y_n^{\bdsy{\al},\beta}}
&=\frac{1_{Y_n^{\bdsy{\al},\beta}}\circ S_{\beta}\vert_{Y_n^{\bdsy{\al},\beta}}^{-1}}{S_{\beta}'\circ S_{\beta}\vert_{Y_n^{\bdsy{\al},\beta}}^{-1}}
\le\dfrac{\lam\left(Y_{n+1}^{\bdsy{\al},\beta}\right)}{\lam\left(X_{\eta(\bdsy{\al},\beta)+n}\right)}1_{X_{\eta(\bdsy{\al},\beta)+n-1}},\text{ and}\\
\mathcal{L}_{T_{\al,\beta}}1_{Y_n^{\bdsy{\al},\beta}}&\ge\dfrac{\lam\left(Y_{n-1}^{\bdsy{\al},\beta}\right)}{\lam\left(X_{\eta(\bdsy{\al},\beta)+n-2}\right)}1_{X_{\eta(\bdsy{\al},\beta)+n-1}}
\end{align*}
\end{linenomath}
by the convexity of $S_{\beta}$.
Hence, taking it into account that
\[
\bigcup_{n=\eta(\bdsy{\al},\beta)+1}^{\infty}X_n^{\bdsy{\al}}\subset S_{\beta}Y\subset\bigcup_{n=\eta(\bdsy{\al},\beta)}^{\infty}X_n^{\bdsy{\al}},
\]
for each $\bdsy{\al}\in\mb{A}^{\N}$ we have
\begin{linenomath}
\begin{align}
Ph_0
&=\int_{\mb{B}}\mathcal{L}_{T_{\al,\beta}}h_0d\nu_{\mb{B}}(\beta)
\le C\int_{\mb{B}}\sum_{n=1}^{\infty}\frac{\lam\left(Y_{n+1}^{\bdsy{\al},\beta}\right)}{\lam\left(X_{\eta(\bdsy{\al},\beta)+n}^{\bdsy{\al}}\right)}1_{X_{\eta(\bdsy{\al},\beta)+n-1}^{\bdsy{\al}}}d\nu_{\mb{B}}(\beta),\text{ and} \label{ineq2}\\
Ph_0&\ge C^{-1}\int_{\mb{B}}\sum_{n=2}^{\infty}\frac{\lam\left(Y_{n-1}^{\bdsy{\al},\beta}\right)}{\lam\left(X_{\eta(\bdsy{\al},\beta)+n-2}^{\bdsy{\al}}\right)}1_{X_{\eta(\bdsy{\al},\beta)+n-1}^{\bdsy{\al}}}d\nu_{\mb{B}}(\beta) \label{ineq222}
\end{align}
\end{linenomath}
where we define $X_0^{\bdsy{\al}}\coloneqq Y$.
Note that for each $n\ge2$, $\tau_{\bdsy{\al}}^{k}\coloneqq\tau_{\al_{n-k+1}}\circ\tau_{\al_{n-k+2}}\circ\cdots\circ\tau_{\al_n}\vert_{X_n^{\bdsy{\al}}}:X_n^{\bdsy{\al}}\to X_{n-k}^{\bdsy{\al}}$ is convex for $1\le k\le n-1$ where $\bdsy{\al}=(\al_1,\al_2,\dots)$.
Thus we have
\begin{linenomath}
\begin{align}
\frac{\lam\left(X_{n-1}^{\bdsy{\al}}\right)}{\lam\left(X_{n-1-k}^{\bdsy{\al}}\right)}1_{X_{n-k}^{\bdsy{\al}}}
\le\mathcal{L}_{\tau_{\bdsy{\al}}^k}1_{X_n^{\bdsy{\al}}}
\le\frac{\lam\left(X_{n+1}^{\bdsy{\al}}\right)}{\lam\left(X_{n+1-k}^{\bdsy{\al}}\right)}1_{X_{n-k}^{\bdsy{\al}}} \label{ineq11}
\end{align}
\end{linenomath}
for $1\le k\le n-2$ and
\begin{linenomath}
\begin{align}
\frac{1}{\tau_{\al_1}'\left(\frac{1}{2}\right)}\cdot\frac{\lam\left(X_{n-1}^{\bdsy{\al}}\right)}{\lam\left(X_{1}^{\bdsy{\al}}\right)}1_{X_1^{\bdsy{\al}}}
\le\mathcal{L}_{\tau_{\bdsy{\al}}^{n-1}}1_{X_n^{\bdsy{\al}}}
\le\frac{\lam\left(X_{n+1}^{\bdsy{\al}}\right)}{\lam\left(X_{1}^{\bdsy{\al}}\right)}1_{X_1^{\bdsy{\al}}}. \label{ineq33}
\end{align}
\end{linenomath}
Note that $h-h_0$ is supported on $Y^c=\bigcup_{n=1}^{\infty}X_n^{\bdsy{\al}}\pmod{\lam}$ for any $\bdsy{\al}\in\mb{A}^{\N}$, where $h$ is a $P$-invariant locally integrable function given in \eqref{eqform}.
Then by combining the inequality \eqref{ineq2} and \eqref{ineq222} with \eqref{ineq11} and \eqref{ineq33},
it also follows from  $I_{Y^c}\mathcal{L}_{\tau_{\al}1_{X_1^{\bdsy{\al}}}}=0$ for any $\al\in\mb{A}$ that for each $N\ge1$
\begin{linenomath}
\begin{align*}
h-h_0
&\le C\int_{\mb{A}^{\N}\times\mb{B}}{\sum_{n= 1+\delta_{0,\eta(\bdsy{\al},\beta)}}^{\infty}}\sum_{k=0}^{\eta(\bdsy{\al},\beta)+n-2}\frac{\lam\left(Y_{n+1}^{\bdsy{\al},\beta}\right)}{\lam\left(X_{\eta(\bdsy{\al},\beta)+n-k}^{\bdsy{\al}}\right)}1_{X_{\eta(\bdsy{\al},\beta)+n-k-1}^{\bdsy{\al}}}d\nu_{\mb{A}}^{\infty}(\bdsy{\al})d\nu_{\mb{B}}(\beta),\text{ and}\\
h-h_0
&\ge C^{-1}\int_{\mb{A}^{\N}\times\mb{B}}{\sum_{n= 2}^{\infty}}\sum_{k=0}^{\eta(\bdsy{\al},\beta)+n-2}\frac{1}{\tau_{\al_1}'\left(\frac{1}{2}\right)}\cdot\frac{\lam\left(Y_{n-1}^{\bdsy{\al},\beta}\right)}{\lam\left(X_{\eta(\bdsy{\al},\beta)+n-k-1}^{\bdsy{\al}}\right)}1_{X_{\eta(\bdsy{\al},\beta)+n-k-1}^{\bdsy{\al}}}d\nu_{\mb{A}}^{\infty}(\bdsy{\al})d\nu_{\mb{B}}(\beta)
\end{align*}
\end{linenomath}
where
$\delta_{0,\eta(\bdsy{\al},\beta)}$ is the Dirac delta function.
Here $n$ for the summand of the upper bound for $h-h_0$ runs from $1+\delta_{0,\eta(\bdsy{\al},\beta)}$ in order that the union of $X_{\eta(\bdsy{\al},\beta)+n-k-1}^{\bdsy{\al}}$'s coincides with $Y^c$.
Comparing the coefficients of $1_{X_m}$ above, we have
\begin{linenomath}
\begin{align*}
h-h_0
&\le C\int_{\mb{A}^{\N}\times\mb{B}}\sum_{m=1}^{\infty}\underset{\eta(\bdsy{\al},\beta)+n\ge m+1}{\sum_{n\ge 1+\delta_{0,\eta(\bdsy{\al},\beta)}}}\frac{\lam\left(Y_{n+1}^{\bdsy{\al},\beta}\right)}{\lam\left(X_{m+1}^{\bdsy{\al}}\right)}1_{X_m^{\bdsy{\al}}}d\nu_{\mb{A}}^{\infty}(\bdsy{\al})d\nu_{\mb{B}}(\beta),\text{ and}\\
h-h_0
&\ge C^{-1}\int_{\mb{A}^{\N}\times\mb{B}}\sum_{m=1}^{\infty}\underset{\eta(\bdsy{\al},\beta)+n\ge m}{\sum_{n\ge 2}}\frac{1}{\tau_{\al_1}'\left(\frac{1}{2}\right)}\cdot\frac{\lam\left(Y_{n-1}^{\bdsy{\al},\beta}\right)}{\lam\left(X_m^{\bdsy{\al}}\right)}1_{X_m^{\bdsy{\al}}}d\nu_{\mb{A}}^{\infty}(\bdsy{\al})d\nu_{\mb{B}}(\beta).
\end{align*}
\end{linenomath}
For fixed $m\ge1$, we have that
\[
\underset{\eta(\bdsy{\al},\beta)+n\ge m}{\sum_{n\ge 2}}\frac{\lam\left(Y_{n-1}^{\bdsy{\al},\beta}\right)}{\lam\left(X_m^{\bdsy{\al}}\right)}
=\sum_{n=\max\{2,m-\eta(\bdsy{\al},\beta)\}}^{\infty} \frac{\lam\left(Y_{n-1}^{\bdsy{\al},\beta}\right)}{\lam\left(X_m^{\bdsy{\al}}\right)}
=\frac{y_{\max\{1,m-\eta(\bdsy{\al},\beta)-1\}}^{\bdsy{\al},\beta}-\frac{1}{2}}{\lam\left(X_m^{\bdsy{\al}}\right)}.
\]
Note that
\[
y_{\max\{1,m-\eta(\bdsy{\al},\beta)-1\}}^{\bdsy{\al},\beta}
=
\begin{cases}
1 & \text{if }\eta(\bdsy{\al},\beta)+2>m,\\
S_{\beta}^{-1}\left(x_{m-1}^{\bdsy{\al}}\right) & \text{if }m\ge2+\eta(\bdsy{\al},\beta).
\end{cases}
\]
If $\eta(\bdsy{\al},\beta)+2>m$ then
\[
\underset{\eta(\bdsy{\al},\beta)+n\ge m}{\sum_{n\ge 2}}\frac{\lam\left(Y_{n-1}^{\bdsy{\al},\beta}\right)}{\lam\left(X_m^{\bdsy{\al}}\right)}
=\frac{1}{2\lam\left(X_m^{\bdsy{\al}}\right)}
\ge\frac{1}{2}
\]
and if $m\ge\eta(\bdsy{\al},\beta)+2$ then
\begin{linenomath}
\begin{align*}
\underset{\eta(\bdsy{\al},\beta)+n\ge m}{\sum_{n\ge 2}}\frac{\lam\left(Y_{n-1}^{\bdsy{\al},\beta}\right)}{\lam\left(X_m^{\bdsy{\al}}\right)}
&=\frac{S_{\beta}^{-1}\left(x_{m-1}^{\bdsy{\al}}\right)-S_{\beta}^{-1}(0)}{\lam\left(X_m^{\bdsy{\al}}\right)}\\
&\ge\dfrac{x_{m-1}^{\bdsy{\al}}-0}{2\left(x_m^{\bdsy{\al}}-x_{m+1}^{\bdsy{\al}}\right)}
\ge\dfrac{x_{m-1}^{\bdsy{\al}}}{2x_m^{\bdsy{\al}}}\ge\dfrac{1}{2}
\end{align*}
\end{linenomath}
again from the convexity of $S_{\beta}$.
Under the assumption \eqref{3}, we also have a lower bound $\frac{1}{2C}$ for $h$.
Therefore, we conclude $h$ is bounded above on the complement of each small neighborhood of $0$ and, under the assumption \eqref{3}, bounded away from zero on $X$ as well.

The conservativity of $\mu$ follows from \cite[Remark 12]{T} and ergodicity follows from Lemma \ref{lem3} and \cite[Proposition 2.1]{T2}.
\end{proof}

\begin{proof}[Proof of Theorem \ref{Cor1}]
In this proof, since $\mb{A}$ is a singleton, we write $X_n$ and $Y_n^{\beta}$ instead of $X_n^{\bdsy{\al}}$ and $Y_n^{\bdsy{\al},\beta}$.
As shown in the proof of Theorem \ref{Thm1}, the invariant density function of $\mu$ satisfies
\begin{linenomath}
\begin{align*}
h-h_0
&\le C\int_{\mb{B}}\sum_{m=1}^{\infty}\underset{\eta(\beta)+n\ge m+1}{\sum_{n\ge 1+\delta_{0,\eta(\beta)}}}\frac{\lam\left(Y_{n+1}^{\beta}\right)}{\lam\left(X_{m+1}\right)}1_{X_m} d\nu_{\mb{B}}(\beta),\text{ and}\\
h-h_0
&\ge C^{-1}\int_{\mb{B}}\sum_{m=1}^{\infty}\underset{\eta(\beta)+n\ge m}{\sum_{n\ge 2}}\frac{\lam\left(Y_{n-1}^{\beta}\right)}{\lam\left(X_m\right)}1_{X_m} d\nu_{\mb{B}}(\beta)
\end{align*}
\end{linenomath}
for some $C>0$.
Therefore, integrating the above inequalities over $X_m$, for $m\ge1$ large enough, we have
\begin{linenomath}
\begin{align*}
\mu(X_m)&\gtrapprox\int_{\mb{B}}\underset{\eta(\beta)+n\ge m}{\sum_{n\ge2}}\lam\left(Y_{n-1}^{\beta}\right)d\nu_{\mb{B}}(\beta)\\
&\gtrapprox\int_{\left\{\beta\in\mb{B}:\eta(\beta)<m\right\}}\left(y_{m-\eta(\beta)}^{\beta}-\tfrac{1}{2}\right)d\nu_{\mb{B}}(\beta)+\nu_{\mb{B}}\left\{\beta\in\mb{B}:\eta(\beta)\ge m\right\}.
\end{align*}
\end{linenomath}
The upper estimate of asymptotics of $\mu(X_m)$ is almost same and omitted.
The proof is completed.
\end{proof}

\begin{proof}[Proof of Theorem \ref{Cor2}]
Since the density functions of $\mu$, $\mu_1$ and $\mu_2$ restricted on $Y$ are all bounded above and away from zero, the assumptions of comparison theorems (Theorem 6.2 and Theorem 6.5) in \cite{I4} are fulfilled.
\end{proof}

\section{Examples}\label{sec4}

In this section, we apply our result to several random piecewsise convex maps.

\subsection{Random piecewise linear maps with low slopes}\label{subsec41}

Let $\mb{B}\subset\N$ and $p_{\beta}\coloneqq\nu_{\mb{B}}(\{\beta\})$ a point mass on $\mb{B}$.
We define for $\beta\in\mb{B}$
\begin{linenomath}
\begin{align}\label{ex-1}
T_{\beta}x=
\begin{cases}
2x &x\in\left[0,\frac{1}{2}\right],\\
2^{-\beta}(2x-1) &x\in\left(\frac{1}{2},1\right].
\end{cases}
\end{align}
\end{linenomath}
This obviously satisfies \eqref{0}--\eqref{2} and \eqref{3}.
Note that the left branch of $T_{\beta}x=2x$ does not vary at all, and $\mb{A}$ is interpreted as a singleton.
By the definition of $T_{\beta}$, $x_n=\frac{1}{2^n}$ and $X_n=(\tfrac{1}{2^{n+1}},\tfrac{1}{2^{n}}]$ for $n\ge1$.
Thus we have $\eta(\beta)=\beta$ and
\[
y_{n+1}^{\beta}=\frac{1}{2}+\frac{1}{2^{n+1}}.
\]
Then we can apply Theorem \ref{Thm1} and Theorem \ref{Cor1} to get

\begin{proposition}\label{prop--1}
The random piecewise convex map given by \eqref{ex-1} admits a $\lam$-equivalent, conservative and ergodic $\sigma$-finite invariant measure $\mu$ such that
\[
\mu\left(\left(\tfrac{1}{2^{n+1}},\tfrac{1}{2^{n}}\right]\right)
\approx\frac{\sum_{\beta\in\mb{B}:\beta<n}2^{\beta}p_{\beta}}{2^n}+\sum_{\beta\in\mb{B}:\beta\ge n}p_{\beta}
\]
for $n$ large enough.
\end{proposition}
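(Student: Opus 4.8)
The plan is to verify directly that the map \eqref{ex-1} satisfies the hypotheses of Theorems \ref{Thm1} and \ref{Cor1}, and then to substitute the explicit combinatorial data of this example into the asymptotic formula of Theorem \ref{Cor1}. First I would observe that the left branch $\tau(x)=2x$ on $[0,\tfrac12]$ is independent of any parameter, so $\mb{A}$ is a singleton and conditions \eqref{1} and \eqref{2} are immediate: $\tau(0)=0$, $\tau(\tfrac12)=1$, $\tau'\equiv2>1$, while each right branch $S_\beta(x)=2^{-\beta}(2x-1)$ on $(\tfrac12,1]$ satisfies $S_\beta(\tfrac12)=0$, $S_\beta'\equiv2^{-\beta}>0$; both derivatives are (trivially) non-decreasing and $\tau'(\tfrac12)=2<\infty$, so \eqref{3} holds and hence also \eqref{4}. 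Measurability \eqref{0} is clear since $\mb{B}\subset\N$ is discrete. Thus Theorem \ref{Thm1} applies and yields a $\lam$-equivalent, conservative, ergodic $\sigma$-finite invariant measure $\mu$.

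Next I would pin down the combinatorial quantities. Since $\tau^{-1}(x)=x/2$, iterating from $x_1=\tfrac12$ gives $x_n=2^{-n}$ and $X_n=(2^{-(n+1)},2^{-n}]$ for $n\ge1$. For the right branch with parameter $\beta$ we have $S_\beta(1)=2^{-\beta}$, which lies in $X_\beta=(2^{-(\beta+1)},2^{-\beta}]$, so $\eta(\beta)=\beta$. The inverse of $x_{\eta(\beta)+n}=x_{\beta+n}=2^{-(\beta+n)}$ under $S_\beta$ is the solution of $2^{-\beta}(2y-1)=2^{-(\beta+n)}$, i.e. $y=\tfrac12+2^{-(n+1)}$; since $y_1=1$ this is consistent and gives $y_{n+1}^\beta=\tfrac12+2^{-(n+1)}$ for $n\ge1$, hence $y_k^\beta-\tfrac12=2^{-k}$ for $k\ge1$. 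These are exactly the formulas displayed before the proposition.

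Finally I would feed these into Theorem \ref{Cor1}. Its conclusion reads
\[
\mu(X_n)\approx\int_{\{\beta\in\mb{B}:\eta(\beta)<n\}}\bigl(y_{n-\eta(\beta)}^\beta-\tfrac12\bigr)\,d\nu_{\mb{B}}(\beta)+\nu_{\mb{B}}\{\beta\in\mb{B}:\eta(\beta)\ge n\}
\]
for $n$ large. With $\eta(\beta)=\beta$, $\nu_{\mb{B}}(\{\beta\})=p_\beta$, and $y_{n-\beta}^\beta-\tfrac12=2^{-(n-\beta)}=2^\beta/2^n$ for $\beta<n$, the first term becomes $\sum_{\beta\in\mb{B},\,\beta<n}2^\beta p_\beta/2^n$ and the second becomes $\sum_{\beta\in\mb{B},\,\beta\ge n}p_\beta$, which is the claimed asymptotic. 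Since $X_n=(2^{-(n+1)},2^{-n}]$ this is precisely the assertion of Proposition \ref{prop--1}.

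There is essentially no serious obstacle here: the proof is a routine specialization, and the only thing to be a little careful about is confirming that the definition of $\eta$ (requiring $S_\beta(1)\in X_{\eta(\bdsy\al,\beta)}$) is met with $\eta(\beta)=\beta$ exactly because $S_\beta(1)=2^{-\beta}$ is the right endpoint of $X_\beta$, together with checking that the ``for $n$ large enough'' hypothesis of Theorem \ref{Cor1} causes no issue — here the formula in fact holds for all $n\ge1$, so nothing is lost.
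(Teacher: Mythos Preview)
Your proposal is correct and follows essentially the same approach as the paper: the paper also verifies that \eqref{0}--\eqref{2} and \eqref{3} hold, computes $x_n=2^{-n}$, $\eta(\beta)=\beta$, and $y_{n+1}^\beta=\tfrac12+2^{-(n+1)}$, and then simply invokes Theorems \ref{Thm1} and \ref{Cor1}. Your write-up is in fact more detailed than the paper's, which treats the proposition as an immediate consequence of the preceding computations.
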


\begin{remark}
By the above proposition, for example, when $\mb{B}=\{[k^n]:n\ge1\}$ and $p_{[k^n]}=2^{-n}$ for some $k\ge2$, $\mu$ is infinite if $k\ge2$, where $[\,\cdot\,]$ denotes its integer part.
Indeed, in this setting, we can write
\begin{linenomath}
\begin{align*}
\mu\left(\left(\tfrac{1}{2^{n+1}},\tfrac{1}{2^{n}}\right]\right)
&\approx 2^{-n}\sum_{1\le s<\log_kn}2^{[k^s]}2^{-s}+\sum_{s\ge\log_kn}2^{-s}\\
&\gtrapprox n^{-1}
\end{align*}
\end{linenomath}
and this shows the desired conclusion.
\end{remark}

\subsection{Random weakly expanding map with positive derivative}\label{subsec42}
We first note that this example contains random LSV maps.
Let $\mb{A}=[\al_0,\al_1]$ for some $0<\al_0<\al_1<\infty$ and $\mb{B}$ be some parameter space.
We set probability measures $\nu_{\mb{A}}$ and $\nu_{\mb{B}}$ on the parameter spaces $\mb{A}$ and $\mb{B}$ respectively.
For $\al\in\mb{A}$ and $\beta\in\mb{B}$, we define
\begin{linenomath}
\begin{align}\label{ex0}
T_{\al,\beta}x=
\begin{cases}
x\left(1+2^{\al}x^{\al}\right) &x\in\left[0,\frac{1}{2}\right],\\
S_{\beta}x &x\in\left(\frac{1}{2},1\right]
\end{cases}
\end{align}
\end{linenomath}
where we assume the conditions \eqref{0}--\eqref{2}.
The condition \eqref{3} holds since $\al_1<\infty$.
Suppose further that there exists $\gamma>0$ such that for $\nu_{\mb{B}}$-almost every $\beta\in\mb{B}$ it holds $S_{\beta}'x>\gamma$.
This implies $\esssup_{\mb{B}}\eta(\beta)<\infty$.
Moreover, since $\gamma(x-\frac{1}{2})\le T\vert_{(\frac{1}{2},1]}\le 2(x-\frac{1}{2})$ by the convexity of $S_{\beta}$, for each $\bdsy{\al}=(\al,\al,\dots)\in\mb{A}^{\N}$ we have
\[
\frac{x_n^{\bdsy{\al}}}{2}\le y_{n+1}^{\bdsy{\al},\beta}-\frac{1}{2}\le \frac{x_n^{\bdsy{\al}}}{\gamma}
\]
for large $n\ge 1$.
According to the asymptotic approximation \eqref{eq1}, we have
\[
y_{n+1}^{\bdsy{\al},\beta}-\frac{1}{2}\approx n^{-\frac{1}{\al}}
\]
for each $\bdsy{\al}=(\al,\al,\dots)\in\mb{A}^{\N}$ and $\beta\in\mb{B}$.
Note that if $\bar{\al}\ge\al$ then $T_{\al,\beta}(x)\ge T_{\bar{\al},\beta}(x)$ for any $x\in[0,\frac{1}{2}]$ and $\beta\in\mb{B}$.
Then applying Theorem \ref{Cor2} to this model, we have the following.

\begin{proposition}\label{prop0}
The random piecewise convex map derived from \eqref{ex0} admits a $\lam$-equivalent, conservative and ergodic $\sigma$-finite invariant measure $\mu$ such that for any $\al'\in\mb{A}$ with $\nu_{\mb{A}}\{\al\in\mb{A}:\al'\ge\al\}>0$
\[
n^{-\frac{1}{\al_0}}\lessapprox\mu\left(X_n^{\bdsy{\al_0}}\right)
\text{ and }
\mu\left(X_n^{\bdsy{\al'}}\right)\lessapprox n^{-\frac{1}{\al'}}
\]
for $n$ large enough, where $\bdsy{\al}_0=(\al_0,\al_0,\dots)$ and $\bdsy{\al'}=(\al',\al',\dots)$.
\end{proposition}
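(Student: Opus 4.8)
The plan is to apply Theorem~\ref{Cor2} to the random map \eqref{ex0}, so the work reduces to checking the hypotheses of that theorem and identifying the two comparison maps $\mu_1$ and $\mu_2$. The conditions \eqref{0}--\eqref{2} are assumed, and \eqref{3} holds because $\esssup_{\al\in\mb{A}}\tau_\al'(\tfrac12)=1+2^{\al_1}\al_1<\infty$ since $\al_1<\infty$; hence \eqref{4} is automatic and Theorem~\ref{Thm1} already furnishes the $\lam$-equivalent conservative ergodic $\sigma$-finite $\mu$. For the condition $(\dagger)$, note that the left branches $\tau_\al(x)=x(1+2^\al x^\al)$ are monotone in the parameter: if $\bar\al\ge\al$ then $2^\al x^\al\ge 2^{\bar\al}x^{\bar\al}$ for $x\in[0,\tfrac12]$ (because $2x\le 1$), so $\tau_\al\ge\tau_{\bar\al}$ pointwise on $(0,\tfrac12)$. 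Thus $\al_1=\al_0$ serves as the parameter dominating all others from above, and for any $\al'\in\mb{A}$ with $\nu_{\mb{A}}\{\al\in\mb{A}:\al'\ge\al\}>0$ we may take $\al_2=\al'$ in the lower comparison (here I am invoking Remark~(II) after Theorem~\ref{Cor2}, reading $\al_1$ and $\al_2$ of that theorem as $\al_0$ and $\al'$ respectively).

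Next I would compute the asymptotics of the two comparison measures $\mu_{\al_0}$ and $\mu_{\al'}$ via Theorem~\ref{Cor1}. For a fixed parameter $\bar\al$ and the constant sequence $\bdsy{\bar\al}=(\bar\al,\bar\al,\dots)$, the map $\tau_{\bar\al}$ coincides with the LSV left branch $T_{\bar\al}\vert_{[0,1/2]}$, so the backward orbit $x_n^{\bdsy{\bar\al}}$ is exactly $x_n(\bar\al)$ from \eqref{eq1}, giving $x_n^{\bdsy{\bar\al}}\sim Cn^{-1/\bar\al}$. The hypothesis $S_\beta'>\gamma$ together with convexity of $S_\beta$ yields the sandwich $\gamma(x-\tfrac12)\le S_\beta(x)\le 2(x-\tfrac12)$ on $(\tfrac12,1]$, hence (since the $y$'s are preimages under $S_\beta$ of the $x$'s) $\tfrac12 x_n^{\bdsy{\bar\al}}\le y_{n+1}^{\bdsy{\bar\al},\beta}-\tfrac12\le \tfrac1\gamma x_n^{\bdsy{\bar\al}}$ for large $n$, i.e. $y_{n+1}^{\bdsy{\bar\al},\beta}-\tfrac12\approx n^{-1/\bar\al}$ uniformly in $\beta$. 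Since $S_\beta'>\gamma$ forces $\essinf_\beta S_\beta(1)>0$ and hence $\esssup_\beta\eta(\beta)<\infty$, Remark~\ref{rem11} applies: the second term $\nu_{\mb{B}}\{\eta(\beta)\ge n\}$ vanishes for large $n$, and the first term with the bounded shift $\eta(\beta)$ does not affect the order, so $\mu_{\bar\al}(X_n^{\bdsy{\bar\al}})\approx\int_{\mb{B}}(y_{n-\eta(\beta)}^{\beta}-\tfrac12)\,d\nu_{\mb{B}}(\beta)\approx n^{-1/\bar\al}$.

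Finally I would assemble the bound. Theorem~\ref{Cor2} gives a constant $M>0$ with $M^{-1}\mu_{\al_0}([a,b])\le\mu([a,b])\le M\mu_{\al'}([a,b])$ for all $0\le a<b\le\tfrac12$. Taking $[a,b]=\overline{X_n^{\bdsy{\al_0}}}$ on the left yields $\mu(X_n^{\bdsy{\al_0}})\gtrapprox\mu_{\al_0}(X_n^{\bdsy{\al_0}})\approx n^{-1/\al_0}$, and taking $[a,b]=\overline{X_n^{\bdsy{\al'}}}$ on the right yields $\mu(X_n^{\bdsy{\al'}})\lessapprox\mu_{\al'}(X_n^{\bdsy{\al'}})\approx n^{-1/\al'}$, which is the claimed double estimate. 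The main obstacle is the bookkeeping around the shift index $\eta(\beta)$ and making rigorous the reduction from the genuinely non-singleton $\mb{A}$ to the two singleton systems — that is, verifying that $(\dagger)$ holds with the stated choices and that Remark~(II) after Theorem~\ref{Cor2} legitimately lets us use $\al'$ as both comparison parameters in the one-sided estimates we need; the LSV asymptotics \eqref{eq1} and the convexity sandwich for $S_\beta$ are otherwise routine inputs. One should also double-check that $\al'$ need not be expanding-on-average outside a neighborhood of $0$ — but this is precisely what the general framework of Theorem~\ref{Thm1} was built to avoid requiring.
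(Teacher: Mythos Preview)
Your proposal is correct and follows essentially the same route as the paper: verify \eqref{3}, check $(\dagger)$ via the monotonicity $\bar\al\ge\al\Rightarrow\tau_\al\ge\tau_{\bar\al}$ on $(0,\tfrac12)$, use the convexity sandwich $\gamma(x-\tfrac12)\le S_\beta(x)\le 2(x-\tfrac12)$ together with \eqref{eq1} to get $y_{n+1}^{\bdsy{\bar\al},\beta}-\tfrac12\approx n^{-1/\bar\al}$, and then invoke Theorem~\ref{Cor2} (with Theorem~\ref{Cor1} and Remark~\ref{rem11} supplying the singleton comparison estimates). The only slip is the explicit value of $\tau_\al'(\tfrac12)$, which equals $\al+2$ rather than $1+2^{\al_1}\al_1$; this is immaterial since either way it is bounded on $\mb{A}=[\al_0,\al_1]$.
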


As consequences of Proposition \ref{prop0}, $\mu([0,1])=\infty$ if $\al_0\ge1$.
Also if there is some $\al'\in[\al_0,1)$ such that $\nu_{\mb{A}}\{\al\in\mb{A}:\al'\ge\al\}>0$ then $\mu([0,1])<\infty$.

\subsection{Random weakly expanding maps with uniformly contracting branches}\label{subsec43}
Let $\mb{A}=[\al_0,\al_1]$ for some $0<\al_0<\al_1<\infty$ and $\mb{B}=[0,1]$.
We set probability measures $\nu_{\mb{A}}$ and $\nu_{\mb{B}}$ on the parameter spaces $\mb{A}$ and $\mb{B}$ respectively.
For $\al\in\mb{A}$ and $\beta\in\mb{B}$, we define
\begin{linenomath}
\begin{align}\label{ex01}
T_{\al,\beta}x=
\begin{cases}
x\left(1+2^{\al}x^{\al}\right) &x\in\left[0,\frac{1}{2}\right],\\
\beta\left(x-\frac{1}{2}\right) &x\in\left(\frac{1}{2},1\right].
\end{cases}
\end{align}
\end{linenomath}
Then \eqref{0}--\eqref{2} and \eqref{3} are satisfied.
If we set $\mb{B}_k(\bdsy{\al})\coloneqq\{\beta\in\mb{B}:\eta(\bdsy{\al},\beta)=k\}$ for $\bdsy{\al}=(\al,\al,\dots)\in\mb{A}^{\N}$, then $\mb{B}=\bigcup_{k=1}^{\infty}\mb{B}_k(\bdsy{\al})$ (disjoint) for each $\bdsy{\al}\in\mb{A}^{\N}$.
For $\beta\in\mb{B}_k(\bdsy{\al})$, by \eqref{LSV}, we have
\[
y_{n+1}^{\bdsy{\al},\beta}-\frac{1}{2}\approx\frac{(n+k)^{-\frac{1}{\al}}}{\beta}
\quad\text{and}\quad
y_{n-\eta(\bdsy{\al},\beta)+1}^{\bdsy{\al},\beta}-\frac{1}{2}\approx\frac{n^{-\frac{1}{\al}}}{\beta}.
\]
Hence, Theorem \ref{Cor1} ensures a random piecewise convex map $\{T_{\al,\beta};\nu_{\mb{B}}:\beta\in\mb{B}\}$, where $\al\in\mb{A}$ is fixed with $\bdsy{\al}=(\al,\al,\dots)\in\mb{A}^{\N}$, to have an invariant measure $\mu_{\bdsy{\al}}$ such that
\begin{linenomath}
\begin{align*}
\mu_{\bdsy{\al}}\left(X_n^{\bdsy{\al}}\right)
&\approx\sum_{k=1}^{n-1}\int_{\{\beta:\eta(\beta)<n\}\cap\mb{B}_k(\bdsy{\al})}\frac{n^{-\frac{1}{\al}}}{\beta}d\nu_{\mb{B}}(\beta)
+\nu_{\mb{B}}\left\{\beta\in\mb{B}:\eta(\beta)\ge n\right\}\\
&\approx n^{-\frac{1}{\al}} \sum_{k=1}^{n-1}\int_{\mb{B}_k(\bdsy{\al})} \frac{1}{\beta}d\nu_{\mb{B}}(\beta)
+\sum_{k=n}^{\infty}\nu_{\mb{B}}\left(\mb{B}_k(\bdsy{\al})\right)\\
\end{align*}
\end{linenomath}
for $n$ large enough.
Thus we have

\begin{proposition}\label{prop444}
The random piecewise convex map derived from \eqref{ex01} admits a $\lam$-equivalent, conservative and ergodic $\sigma$-finite invariant measure $\mu$ such that for any $\al'\in\mb{A}$ with $\nu_{\mb{A}}\{\al\in\mb{A}:\al'\ge\al\}>0$
\begin{linenomath}
\begin{align*}
&n^{-\frac{1}{\al_0}}\sum_{k=1}^{n-1}\int_{\mb{B}_k(\bdsy{\al}_0)} \frac{1}{\beta}d\nu_{\mb{B}}(\beta)
+\sum_{k=n}^{\infty}\nu_{\mb{B}}\left(\mb{B}_k(\bdsy{\al}_0)\right)
\lessapprox\mu\left(X_n^{\bdsy{\al_0}}\right)
\quad\text{and}\\
&\qquad\qquad\qquad\mu\left(X_n^{\bdsy{\al'}}\right)\lessapprox
n^{-\frac{1}{\al'}}\sum_{k=1}^{n-1}\int_{\mb{B}_k(\bdsy{\al'})} \frac{1}{\beta}d\nu_{\mb{B}}(\beta)
+\sum_{k=n}^{\infty}\nu_{\mb{B}}\left(\mb{B}_k(\bdsy{\al'})\right)
\end{align*}
\end{linenomath}
for $n$ large enough, where $\bdsy{\al}_0=(\al_0,\al_0,\dots)$ and $\bdsy{\al'}=(\al',\al',\dots)$.
\end{proposition}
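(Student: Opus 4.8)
The plan is to follow the route already used for Propositions~\ref{prop--1} and~\ref{prop0}: verify the standing hypotheses so that Theorem~\ref{Thm1} applies, check the comparison condition~$(\dagger)$ so that Theorem~\ref{Cor2} reduces matters to the singleton case, and then read off the asymptotics from Theorem~\ref{Cor1} together with the approximation~\eqref{eq1}.

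First I would verify~\eqref{0}--\eqref{2} and~\eqref{3}. The left branch $\tau_{\al}(x)=x+2^{\al}x^{\al+1}$ is $C^1$ and convex on $[0,\tfrac12]$ with $\tau_{\al}(0)=0$, $\tau_{\al}(\tfrac12)=1$, $\tau_{\al}'(0)=1$ and $\tau_{\al}'(x)>1$ for $x\in(0,\tfrac12)$, while the right branch $S_{\beta}(x)=\beta(x-\tfrac12)$ is affine with $S_{\beta}(\tfrac12)=0$ and $S_{\beta}'\equiv\beta>0$ for $\nu_{\mb B}$-almost every $\beta$; hence~\eqref{1} and~\eqref{2} hold, and $\tau_{\al}'(\tfrac12)=\al+2\le\al_1+2<\infty$ gives~\eqref{3}. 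Theorem~\ref{Thm1} then produces the desired $\lam$-equivalent, conservative and ergodic $\sigma$-finite invariant measure $\mu$, whose density is bounded above and bounded away from zero on $Y=[\tfrac12,1]$; the same applies to the singleton systems $\{T_{\al_0,\beta};\nu_{\mb B}\}$ and $\{T_{\al',\beta};\nu_{\mb B}\}$, so the hypotheses of the comparison theorems of~\cite{I4} invoked in Theorem~\ref{Cor2} are met.

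Next I would check~$(\dagger)$. Writing $\tau_{\al}(x)=x+(2x)^{\al}x$ for $x\in(0,\tfrac12)$ and noting that $0<2x<1$, the map $\al\mapsto\tau_{\al}(x)$ is strictly decreasing; thus $\tau_{\al}\le\tau_{\al_0}$ on $(0,\tfrac12)$ for every $\al\in\mb A=[\al_0,\al_1]$, and $\tau_{\al}\ge\tau_{\al'}$ on $(0,\tfrac12)$ exactly when $\al\le\al'$. Hence~$(\dagger)$ holds with its two parameters taken to be $\al_0$ and any $\al'$ with $\nu_{\mb A}\{\al\in\mb A:\al'\ge\al\}>0$, since then $\nu_{\mb A}\{\al:\tau_{\al}\le\tau_{\al_0}\text{ on }(0,\tfrac12)\}=1$ and $\nu_{\mb A}\{\al:\tau_{\al}\ge\tau_{\al'}\text{ on }(0,\tfrac12)\}=\nu_{\mb A}\{\al\in\mb A:\al'\ge\al\}>0$. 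Theorem~\ref{Cor2} now yields $M>0$ with
\[
M^{-1}\mu_{\bdsy\al_0}\!\left([a,b]\right)\le\mu\!\left([a,b]\right)\le M\,\mu_{\bdsy{\al'}}\!\left([a,b]\right)\qquad\text{for }0\le a<b\le\tfrac12,
\]
where $\mu_{\bdsy\al_0}$ and $\mu_{\bdsy{\al'}}$ are the invariant measures for $\{T_{\al_0,\beta};\nu_{\mb B}\}$ and $\{T_{\al',\beta};\nu_{\mb B}\}$ supplied by Theorem~\ref{Cor1}. Since $X_n^{\bdsy\al}=(x_{n+1}^{\bdsy\al},x_n^{\bdsy\al}]\subset[0,\tfrac12]$ for $n\ge1$ and $\mu$ charges no single point, this specializes to $\mu(X_n^{\bdsy\al_0})\gtrapprox\mu_{\bdsy\al_0}(X_n^{\bdsy\al_0})$ and $\mu(X_n^{\bdsy{\al'}})\lessapprox\mu_{\bdsy{\al'}}(X_n^{\bdsy{\al'}})$.

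Finally I would evaluate $\mu_{\bdsy\al}(X_n^{\bdsy\al})$ for $\al\in\{\al_0,\al'\}$ via Theorem~\ref{Cor1}. As $S_{\beta}$ is affine we have $S_{\beta}^{-1}(t)=\tfrac12+t/\beta$, so for $\beta\in\mb B_k(\bdsy\al)$, i.e.\ $\eta(\bdsy\al,\beta)=k$, one gets $y_m^{\bdsy\al,\beta}-\tfrac12=x_{k+m-1}^{\bdsy\al}/\beta$ for $m\ge2$; combining this with $x_j^{\bdsy\al}\approx j^{-1/\al}$ from~\eqref{eq1} gives $y_{n-\eta(\bdsy\al,\beta)}^{\bdsy\al,\beta}-\tfrac12\approx n^{-1/\al}/\beta$. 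Plugging this and $\nu_{\mb B}\{\beta:\eta(\bdsy\al,\beta)\ge n\}=\sum_{k\ge n}\nu_{\mb B}(\mb B_k(\bdsy\al))$ into the formula of Theorem~\ref{Cor1}, and splitting the integral over the partition $\mb B=\bigcup_{k\ge1}\mb B_k(\bdsy\al)$, yields
\[
\mu_{\bdsy\al}\!\left(X_n^{\bdsy\al}\right)\approx n^{-\frac1\al}\sum_{k=1}^{n-1}\int_{\mb B_k(\bdsy\al)}\frac1\beta\,d\nu_{\mb B}(\beta)+\sum_{k=n}^{\infty}\nu_{\mb B}\!\left(\mb B_k(\bdsy\al)\right);
\]
substituting $\al=\al_0$ and $\al=\al'$ and combining with the previous paragraph gives the two inequalities in the statement. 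The main obstacle, modest as it is here, is the verification of~$(\dagger)$, which rests on the monotonicity of $\tau_{\al}$ in $\al$ noted above, together with keeping the indexing in~\eqref{eq1} and Theorem~\ref{Cor1} consistent; the remaining steps are routine bookkeeping.
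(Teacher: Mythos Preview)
Your proposal is correct and follows essentially the same route as the paper: verify \eqref{0}--\eqref{2} and \eqref{3}, apply Theorem~\ref{Cor1} to the singleton systems using the affine form of $S_\beta$ together with~\eqref{eq1} to obtain $\mu_{\bdsy\al}(X_n^{\bdsy\al})\approx n^{-1/\al}\sum_{k=1}^{n-1}\int_{\mb B_k(\bdsy\al)}\beta^{-1}d\nu_{\mb B}+\sum_{k\ge n}\nu_{\mb B}(\mb B_k(\bdsy\al))$, and then invoke Theorem~\ref{Cor2}. The only difference is that you spell out the verification of~$(\dagger)$ via the monotonicity of $\al\mapsto\tau_\al(x)$, which the paper leaves implicit here (it was already noted in Example~\ref{subsec42}).
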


\begin{remark}\label{rem42}
As an example, let $\nu_{\mb{A}}$ be the normalized Lebesgue measure on $\mb{A}=[\frac{1}{2},2]$ (that is, $\al_0=\frac{1}{2}$, $\al_1=2$ and $\al'$ can be taken an arbitrary number in $(\al_0,\al_1]$) and $d\nu_{\mb{B}}(\beta)=(1-\ell)\beta^{-\ell}$ on $\mb{B}=[0,1]$ for some $\ell\in(0,1)$.
Then Proposition \ref{prop444} tells us that $\mu([0,1])=\infty$ if $\ell>\frac{1}{2}$.
Indeed, for each $\bdsy{\al}=(\al,\al,\dots)$ where $\al\in(\frac{1}{2},2]$, since $\sup_{\mb{B}_{k+1}(\bdsy{\al})}\beta=\inf_{\mb{B}_k(\bdsy{\al})}\beta$ for $k\ge1$ and $\sup_{\mb{B}_1(\bdsy{\al})}\beta=1$, we have
\begin{linenomath}
\begin{align*}
n^{-\frac{1}{\al}}\sum_{k=1}^{n-1}\int_{\mb{B}_k(\bdsy{\al})} \frac{1}{\beta}d\nu_{\mb{B}}(\beta)
&=n^{-\frac{1}{\al}}\int_{\inf_{\mb{B}_{n-1}(\bdsy{\al})}\beta}^{1} \frac{1-\ell}{\beta^{1+\ell}}d\beta\\
&\approx n^{-\frac{1}{\al}}\left(\sup_{\beta\in\mb{B}_{n}(\bdsy{\al})}\beta^{-\ell}-1\right)\\
&\approx n^{-\frac{1-\ell}{\al}}
\end{align*}
\end{linenomath}
and
\begin{linenomath}
\begin{align*}
\sum_{k=n}^{\infty}\nu_{\mb{B}}\left(\mb{B}_k(\bdsy{\al})\right)
&=\sum_{k=n}^{\infty}\int_{\mb{B}_k(\bdsy{\al})}\frac{1-\ell}{\beta^{\ell}}d\beta
=\sum_{k=n}^{\infty}\left(\sup_{\beta\in\mb{B}_k(\bdsy{\al})}\beta^{1-\ell}-\inf_{\beta\in\mb{B}_k(\bdsy{\al})}\beta^{1-\ell}\right)\\
&=\sup_{\beta\in\mb{B}_n(\bdsy{\al})}\beta^{1-\ell}\\
&\approx n^{-\frac{1-\ell}{\al}}
\end{align*}
\end{linenomath}
for large $n$.
Here we used that any point $\beta$ in $\mb{B}_k(\bdsy{\al})$ can be approximated by
\[
(k+1)^{-\frac{1}{\al}}\lessapprox\beta\lessapprox k^{-\frac{1}{\al}}
\]
asymptotically.
Then, since we can choose $\al$ arbitrary close to $\frac{1}{2}$, the claim follows.
\end{remark}

\subsection{Random weakly expanding maps with a critical point}\label{subsec44}

Let $\mb{A}\subset(0,+\infty)$ and $\mb{B}\subset(1,+\infty)$ be compact sets and $\nu_{\mb{A}}$ and $\nu_{\mb{B}}$ be probability measures on $\mb{A}$ and $\mb{B}$, respectively.
We let $\al_0\coloneqq\min_{\mb{A}}\al$.
For $\al\in\mb{A}$ and $\beta\in\mb{B}$, define
\begin{linenomath}
\begin{align}\label{ex1}
T_{\al,\beta}x=
\begin{cases}
x\left(1+2^{\al}x^{\al}\right) &x\in\left[0,\frac{1}{2}\right],\\
2^{\beta}\left(x-\frac{1}{2}\right)^{\beta} &x\in\left(\frac{1}{2},1\right].
\end{cases}
\end{align}
\end{linenomath}

\begin{figure}[h]
\centering
\includegraphics[width=8cm]{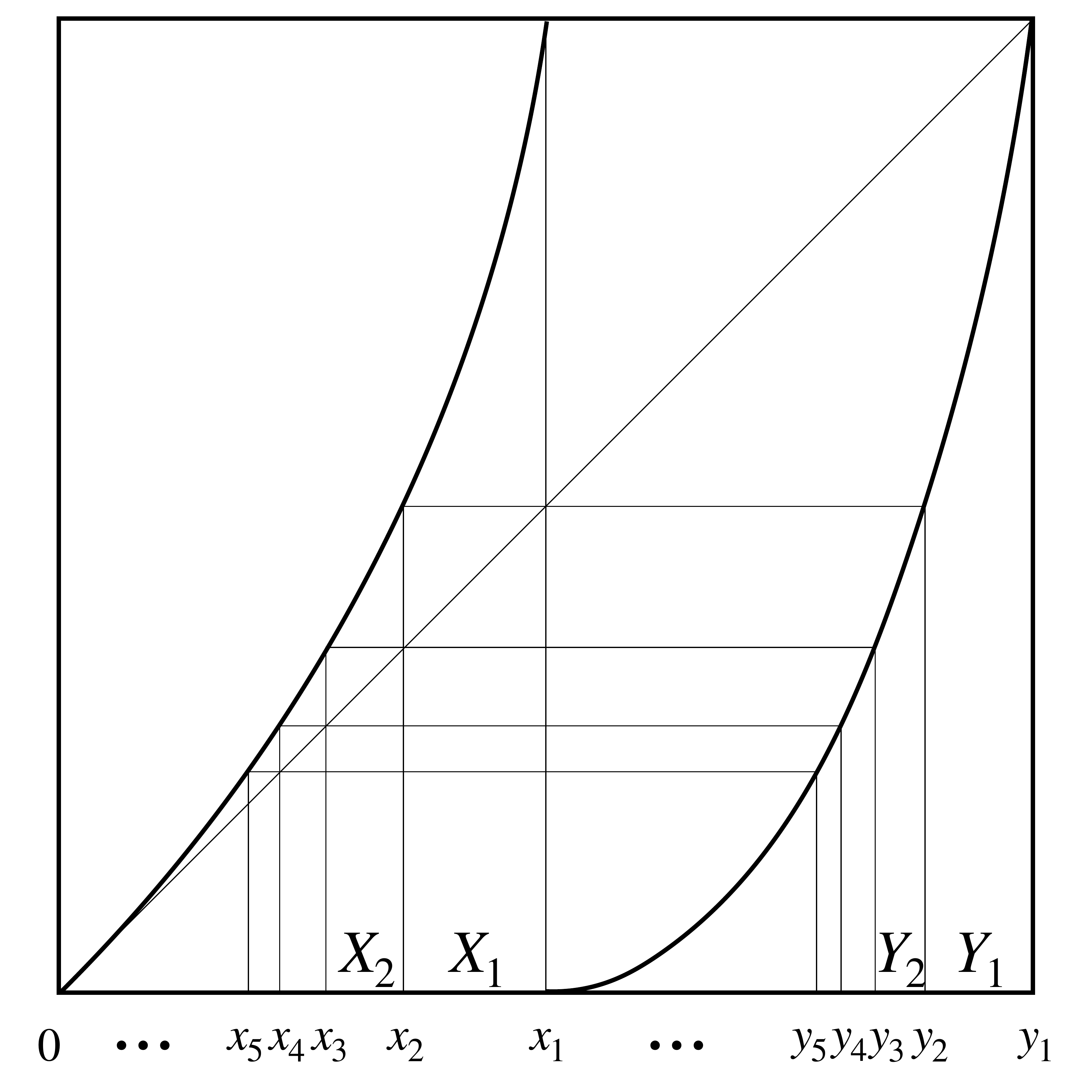}
\caption{The graph of $T_{\al,\beta}$ from \eqref{ex1}}
\label{fig2}
\end{figure}

Note that $T_{\al,\beta}$ has an indifferent fixed point at $0$ for each $\al\in\mb{A}$ and $\beta\in\mb{B}$.
$T_{\al,\beta}$ also has derivative $0$ at $\frac{1}{2}$, around an inverse image of the indifferent fixed point, for any $\al>0$ and $\beta>1$ (see also Figure \ref{fig2}).
According to the asymptotic equation \eqref{eq1} we have
\[
y_{n+1}^{\bdsy{\al},\beta}-\frac{1}{2}\approx n^{-\frac{1}{\al\beta}}.
\]
Then applying Theorem \ref{Cor2} to this model, since $\eta(\bdsy{\al},\beta)=0$ for each $\bdsy{\al}\in\mb{A}^{\N}$ and $\beta\in\mb{B}$, we have the following.

\begin{proposition}\label{prop1}
The random piecewise convex map derived from \eqref{ex1} admits a $\lam$-equivalent, conservative and ergodic $\sigma$-finite invariant measure $\mu$ such that for any $\al'\in\mb{A}$ with $\nu_{\mb{A}}\{\al\in\mb{A}:\al'\le\al\}>0$
\[
\int_{\mb{B}}n^{-\frac{1}{\al_0\beta}}d\nu_{\mb{B}}(\beta)\lessapprox\mu\left(X_n^{\bdsy{\al_0}}\right)
\quad\text{and}\quad
\mu\left(X_n^{\bdsy{\al'}}\right)\lessapprox\int_{\mb{B}}n^{-\frac{1}{\al'\beta}}d\nu_{\mb{B}}(\beta)
\]
for $n$ large enough.
\end{proposition}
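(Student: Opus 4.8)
The proof of Proposition \ref{prop1} is a direct application of Theorem \ref{Cor2}, so the plan is essentially to verify the hypotheses of that theorem for the family \eqref{ex1} and then to translate the resulting estimate into the stated asymptotics. First I would check that \eqref{ex1} defines a random piecewise convex map in the sense of \S\ref{sec2}: conditions \eqref{0}--\eqref{2} are built into the hypotheses (continuity/measurability of $\al\mapsto\tau_\al$, $\beta\mapsto S_\beta$, the convexity and monotonicity of derivatives, $\tau_\al(0)=0$, $\tau_\al(\tfrac12)=1$, $S_\beta(\tfrac12)=0$), and since $\mb A$ is compact in $(0,\infty)$ we have $\esssup_{\al\in\mb A}\tau_\al'(\tfrac12)<\infty$, i.e.\ condition \eqref{3} holds. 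Here it is worth noting that $S_\beta(x)=2^\beta(x-\tfrac12)^\beta$ is surjective onto $[0,1]$ and $S_\beta(1)=1$, so by Definition \ref{def:y} we get $\eta(\bdsy\al,\beta)=0$ for every $\bdsy\al\in\mb A^{\mb N}$ and $\beta\in\mb B$.

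Next I would verify the monotonicity condition $(\dagger)$. For fixed $x\in(0,\tfrac12)$ the map $\al\mapsto\tau_\al(x)=x(1+2^\al x^\al)=x+ (2x)^{\al}\cdot x/(2x)\cdot\ldots$ — more simply, $\tau_\al(x)=x+x(2x)^\al$ and since $2x<1$ the term $(2x)^\al$ is \emph{decreasing} in $\al$, so $\al\mapsto\tau_\al(x)$ is decreasing; equivalently $\tau_\al\le\tau_{\al'}$ on $(0,\tfrac12)$ whenever $\al\ge\al'$. Hence the largest left branch corresponds to the smallest parameter $\al_0=\min_{\mb A}\al$, and one may take $\al_1=\al_0$ in the notation of Theorem \ref{Cor2} for the upper domination $\tau_\al\le\tau_{\al_0}$ $\nu_{\mb A}$-a.s.; for the lower domination one takes $\al_2=\al'$ for any $\al'\in\mb A$ with $\nu_{\mb A}\{\al:\al'\le\al\}>0$, since then $\tau_{\al'}\le\tau_\al$ fails — wait, one needs $\tau_{\al}\ge\tau_{\al_2}$ on a set of positive measure, i.e.\ $\tau_\al\ge\tau_{\al'}$ for $\al\le\al'$, which holds on $\{\al\le\al'\}$, a set of positive $\nu_{\mb A}$-measure by hypothesis. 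So $(\dagger)$ holds with $(\al_1,\al_2)=(\al_0,\al')$, and Theorem \ref{Cor2} gives $M^{-1}\mu_{\al_0}([a,b])\le\mu([a,b])\le M\mu_{\al'}([a,b])$ for a constant $M>0$ depending on the chosen $\al'$.

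Then I would compute $\mu_{\bdsy\al}(X_n^{\bdsy\al})$ for a singleton system via Theorem \ref{Cor1} and Remark \ref{rem11}. Since $S_\beta$ is surjective, $\eta(\beta)=0$ and the second term drops, so $\mu_{\bdsy\al}(X_n^{\bdsy\al})\approx\int_{\mb B}\bigl(y_{n+1}^\beta-\tfrac12\bigr)\,d\nu_{\mb B}(\beta)$. Now the induced inverse $y_{n+1}^\beta=S_\beta^{-1}(x_n^{\bdsy\al})$ satisfies $y_{n+1}^\beta-\tfrac12=\tfrac12\bigl(x_n^{\bdsy\al}\bigr)^{1/\beta}$ exactly from the form of $S_\beta$, and by the classical asymptotics \eqref{eq1} (applicable here because $\tau_\al$ for $\bdsy\al=(\al,\al,\dots)$ is precisely the LSV left branch $x(1+2^\al x^\al)$) we have $x_n^{\bdsy\al}\sim C n^{-1/\al}$, whence $y_{n+1}^{\bdsy\al,\beta}-\tfrac12\approx n^{-1/(\al\beta)}$. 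Substituting and using Theorem \ref{Cor2} with $\al_1=\al_0$, $\al_2=\al'$ yields
\begin{linenomath}
\begin{align*}
\int_{\mb B}n^{-\frac{1}{\al_0\beta}}\,d\nu_{\mb B}(\beta)\lessapprox\mu\bigl(X_n^{\bdsy{\al_0}}\bigr)
\quad\text{and}\quad
\mu\bigl(X_n^{\bdsy{\al'}}\bigr)\lessapprox\int_{\mb B}n^{-\frac{1}{\al'\beta}}\,d\nu_{\mb B}(\beta),
\end{align*}
\end{linenomath}
where one identifies $X_n^{\bdsy\al}=(x_{n+1}^{\bdsy\al},x_n^{\bdsy\al}]$ and one should be a little careful that $\mu(X_n^{\bdsy{\al_0}})$ is compared with $\mu_{\al_0}$ on the \emph{same} interval $X_n^{\bdsy{\al_0}}$ (for the lower bound) while $\mu(X_n^{\bdsy{\al'}})$ is compared with $\mu_{\al'}$ on $X_n^{\bdsy{\al'}}$ (for the upper bound), which is exactly what Theorem \ref{Cor2} provides since its estimate holds for arbitrary subintervals $[a,b]\subset[0,\tfrac12]$. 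The main obstacle — really the only nontrivial point — is confirming the direction of monotonicity in $\al$ (so that the roles of $\al_0$ and $\al'$ in $(\dagger)$ are correctly assigned) and checking that \eqref{eq1} genuinely applies to the left branch here; everything else is bookkeeping with the explicit formulas for $\tau_\al$ and $S_\beta$.
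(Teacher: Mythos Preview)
Your approach is exactly the paper's: verify \eqref{0}--\eqref{2} and \eqref{3}, note $\eta\equiv 0$ because each $S_\beta$ is surjective, check $(\dagger)$, and then feed Theorem \ref{Cor1} (via Remark \ref{rem11}) into Theorem \ref{Cor2} using the explicit asymptotic $y_{n+1}^{\bdsy\al,\beta}-\tfrac12=\tfrac12\,(x_n^{\bdsy\al})^{1/\beta}\approx n^{-1/(\al\beta)}$.

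There is, however, a slip in your verification of $(\dagger)$. You correctly argue that $\al\mapsto\tau_\al(x)=x+x(2x)^\al$ is decreasing on $(0,\tfrac12)$, so $\tau_\al\ge\tau_{\al'}$ holds precisely when $\al\le\al'$; hence the second clause of $(\dagger)$ with $\al_2=\al'$ requires $\nu_{\mb A}\{\al\in\mb A:\al\le\al'\}>0$. But the hypothesis in the proposition as stated is $\nu_{\mb A}\{\al\in\mb A:\al'\le\al\}>0$, which is the \emph{opposite} set, so your line ``which holds on $\{\al\le\al'\}$, a set of positive $\nu_{\mb A}$-measure by hypothesis'' does not follow. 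This discrepancy appears to be a typo in the paper's statement of Proposition \ref{prop1}: compare with Proposition \ref{prop0}, which treats the identical left branch and states the analogous condition as $\nu_{\mb A}\{\al\in\mb A:\al'\ge\al\}>0$, exactly the direction your argument needs. So your reasoning is what the authors evidently intended, but you should flag the inequality reversal rather than silently pass over it.
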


\begin{remark}
(I)
From Proposition \ref{prop1}, if $\nu_{\mb{B}}\left\{\beta\in\mb{B}:\al_0\beta\ge1\right\}>0$ then $\mu([0,1])=\infty$ and if $\al'<(\max_{\mb{B}}\beta)^{-1}=1$ for some $\al'\in\mb{A}$ then $\mu([0,1])<\infty$.
We remark that the invariant measure of \eqref{ex1} tends to become infinite rather than that of \eqref{ex0}.

(II)
\cite[Theorem 1.1]{C} showed that an upper bound for the invariant density
\[
\frac{d\mu}{d\lam}(x)\lessapprox x^{-(1+\al-\frac{1}{\beta})}
\]
holds for the deterministic map (\ref{ex1}) with a fixed parameter such that $1<\beta<\frac{1}{\al}$ (and hence only finite invariant measures are dealt with in \cite{C}).
This also implies that $\mu(X_n^{\bdsy{\al}})\lessapprox n^{-\frac{1}{\al\beta}}$.
Thus Proposition \ref{prop1} is a random generalization of \cite{C} (note that our result can admit parameter $\beta\ge\frac{1}{\al}$) as well as showing lower bound of $\mu$ for large $n\ge1$.
\end{remark}

\subsection{Random weakly expanding maps with a flat point}\label{subsec45}

Let $\mb{A}\subset(0,+\infty)$ and $\mb{B}\subset[1,+\infty)$\footnote{If $\beta<1$, then the convex property of $S_{\beta}$ may be violated.} compact sets and $\nu_{\mb{A}}$ and $\nu_{\mb{B}}$ be probability measures on $\mb{A}$ and $\mb{B}$, respectively.
For $\al\in\mb{A}$ and $\beta\in\mb{B}$, define
\begin{linenomath}
\begin{align}\label{ex2}
T_{\al,\beta}x=
\begin{cases}
x\left(1+2^{\al}x^{\al}\right) &x\in\left[0,\frac{1}{2}\right],\\
\exp\left( 2^{\beta}-\left(x-\frac{1}{2}\right)^{-\beta}\right) &x\in\left(\frac{1}{2},1\right].
\end{cases}
\end{align}
\end{linenomath}
Then we can see that $\frac{1}{2}$, the inverse image of $0$, is a flat point in the sense that $T_{\al,\beta}^{(n)}(\frac{1}{2})=0$ for any $\al>0$ and $\beta\ge1$.
Using the same notation, we have
\[
y_{n+1}^{\bdsy{\al},\beta}-\frac{1}{2}\approx \left(\log n\right)^{-\frac{1}{\beta}}
\]
for large $n\ge1$.
We can again apply Theorem \ref{Cor2} to this model.

\begin{proposition}\label{prop2}
The random piecewise convex map derived from \eqref{ex2} admits a $\lam$-equivalent, conservative and ergodic $\sigma$-finite invariant measure $\mu$ such that for any $\bdsy{\al}=(\al,\al,\dots)\in\mb{A}$
\[
\mu\left(X_n^{\bdsy{\al}}\right)\approx \int_{\mb{B}}\left(\log n\right)^{-\frac{1}{\beta}}d\nu_{\mb{B}}(\beta)
\]
for $n$ large enough.
Consequently, we always have $\mu([0,1])=\infty$ for any $\mb{A}$, $\mb{B}$, $\nu_{\mb{A}}$ and $\nu_{\mb{B}}$.
\end{proposition}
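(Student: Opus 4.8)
The plan is to fit \eqref{ex2} into the framework of \S\ref{sec2} and then read off the asymptotics from Theorems~\ref{Thm1}, \ref{Cor1} and \ref{Cor2}. First I would check the standing hypotheses \eqref{0}--\eqref{3}. The left branches $\tau_\al(x)=x(1+2^\al x^\al)$ coincide with the left branch of the LSV map \eqref{LSV}, hence are $C^1$ with $\tau_\al(0)=0$, $\tau_\al(\tfrac12)=1$, $\tau_\al'(x)=1+(\al+1)2^\al x^\al$ non-decreasing, $\tau_\al'(0)=1$, $\tau_\al'>1$ on $(0,\tfrac12)$, and $\tau_\al'(\tfrac12)=\al+2$; since $\mb A$ is compact this gives $\esssup_{\al\in\mb A}\tau_\al'(\tfrac12)<\infty$, so \eqref{3} (and hence \eqref{4}) holds. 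For the right branches $S_\beta(x)=\exp(2^\beta-(x-\tfrac12)^{-\beta})$ one checks: the exponent tends to $-\infty$ as $x\downarrow\tfrac12$, so $S_\beta$ extends continuously with $S_\beta(\tfrac12)=0$; $S_\beta(1)=\exp(2^\beta-2^\beta)=1$; $S_\beta'=S_\beta\cdot\beta(x-\tfrac12)^{-\beta-1}>0$ on $(\tfrac12,1)$ and tends to $0$ as $x\downarrow\tfrac12$; and $S_\beta''\ge0$ reduces to $\beta(x-\tfrac12)^{-\beta}\ge\beta+1$, which follows from $(x-\tfrac12)^{-\beta}\ge2^\beta$ together with $\beta 2^\beta\ge\beta+1$ for $\beta\ge1$ — this is exactly where $\mb B\subset[1,\infty)$ is needed (cf.\ the footnote). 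Thus \eqref{0}--\eqref{2} hold, and Theorem~\ref{Thm1} already produces a conservative, ergodic, $\lam$-equivalent $\sigma$-finite invariant measure $\mu$.

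Next I would compute the quantity driving the estimate, $y_{n+1}^{\bdsy\al,\beta}-\tfrac12$. Since $S_\beta(1)=1\in(\tfrac12,1]=X_0^{\bdsy\al}$, the index $\eta(\bdsy\al,\beta)$ of Definition~\ref{def:y} is identically $0$ and each $S_\beta$ is onto, so $y_{n+1}^{\bdsy\al,\beta}=S_\beta^{-1}(x_n^{\bdsy\al})$. Inverting the exponential, $y_{n+1}^{\bdsy\al,\beta}-\tfrac12=(2^\beta-\log x_n^{\bdsy\al})^{-1/\beta}$. For a constant sequence $\bdsy\al=(\al,\al,\dots)$ the points $x_n^{\bdsy\al}$ are the backward iterates of $\tfrac12$ under the LSV left branch $\tau_\al$, so by \eqref{eq1} $x_n^{\bdsy\al}\sim Cn^{-1/\al}$, whence $-\log x_n^{\bdsy\al}\sim\tfrac1\al\log n$ and $y_{n+1}^{\bdsy\al,\beta}-\tfrac12\sim\al^{1/\beta}(\log n)^{-1/\beta}$. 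Since $\mb A,\mb B$ are compact and $\beta\ge1$, the prefactor $\al^{1/\beta}$ is bounded above and away from $0$, so $y_{n+1}^{\bdsy\al,\beta}-\tfrac12\approx(\log n)^{-1/\beta}$ uniformly; in particular $y_{n+1}^{\bdsy\al,\beta}\to\tfrac12$, so Lemma~\ref{lem2-1} is applicable.

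Then I would assemble the estimate. Applying Theorem~\ref{Cor1} to the singleton system $\{T_{\al,\beta};\nu_{\mb B}:\beta\in\mb B\}$ and using $\eta\equiv0$, Remark~\ref{rem11} gives $\mu_{\bdsy\al}(X_n^{\bdsy\al})\approx\int_{\mb B}(y_{n+1}^{\bdsy\al,\beta}-\tfrac12)\,d\nu_{\mb B}(\beta)\approx\int_{\mb B}(\log n)^{-1/\beta}\,d\nu_{\mb B}(\beta)$, the term $\nu_{\mb B}\{\eta(\beta)\ge n\}$ vanishing. To treat a general $\mb A$ I would verify $(\dagger)$: on $(0,c)$ with $c<\tfrac12$ one has $\partial_\al\tau_\al(x)=2^\al x^{\al+1}\log(2x)<0$, so $\tau_\al$ is non-increasing in $\al$; hence $\tau_\al\le\tau_{\al_0}$ for every $\al\in\mb A$ with $\al_0=\min\mb A$, and any $\al_2\in\mb A$ with $\nu_{\mb A}\{\al\le\al_2\}>0$ serves for the second half of $(\dagger)$. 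Theorem~\ref{Cor2} then bounds $\mu([a,b])$ above and below by constant multiples of $\mu_2([a,b])$ and $\mu_1([a,b])$; because the singleton estimates have the same logarithmic order whatever the parameter — the reindexing $m\approx n^{\al_i/\al}$ relating $\{X_n^{\bdsy\al}\}$ to $\{X_m^{\bdsy{\al_i}}\}$ alters $\log$ only by a bounded factor — this pins down $\mu(X_n^{\bdsy\al})\approx\int_{\mb B}(\log n)^{-1/\beta}\,d\nu_{\mb B}(\beta)$. Finally, $\beta\ge1$ forces $(\log n)^{-1/\beta}\ge(\log n)^{-1}$, so $\mu([0,1])\ge\mu\bigl([0,\tfrac12)\bigr)=\sum_n\mu(X_n^{\bdsy\al})\gtrapprox\sum_n(\log n)^{-1}=\infty$, and $\mu$ is infinite for every admissible $\mb A,\mb B,\nu_{\mb A},\nu_{\mb B}$.

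The hard part will be the transfer from the singleton systems to the fully random $\mu$ with an estimate uniform in $\al$: one must control how the interval sequence $\{X_n^{\bdsy\al}\}$ used to test $\mu$ interlaces with the sequences $\{X_m^{\bdsy{\al_i}}\}$ governing $\mu_i$ under the power-law correspondence $m\approx n^{\al_i/\al}$, and check carefully that a slowly varying density profile is preserved — it is precisely the logarithmic, rather than polynomial, nature of the flat-point asymptotics that makes the two-sided bound robust here, in contrast to the polynomial cases of the preceding examples. A more routine but genuine point is the convexity check for $S_\beta$, which is what compels $\mb B\subseteq[1,\infty)$.
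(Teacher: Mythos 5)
Your route matches the paper's exactly: verify that \eqref{ex2} satisfies \eqref{0}--\eqref{3}, compute $y_{n+1}^{\bdsy{\al},\beta}-\tfrac12=(2^{\beta}-\log x_n^{\bdsy{\al}})^{-1/\beta}\approx(\log n)^{-1/\beta}$ from \eqref{eq1} and the explicit inverse of $S_{\beta}$, read off the singleton asymptotic from Theorem~\ref{Cor1} with $\eta\equiv0$, and pass to general $\mb{A}$ via $(\dagger)$ and Theorem~\ref{Cor2}. The auxiliary checks you supply are all correct and useful: $\tau_{\al}'(\tfrac12)=\al+2$ gives \eqref{3}; $S_{\beta}(1)=1$ forces $\eta\equiv0$; $S_{\beta}''\ge0$ on $(\tfrac12,1)$ reduces to $\beta(x-\tfrac12)^{-\beta}\ge\beta+1$, which holds because $(x-\tfrac12)^{-\beta}\ge2^{\beta}$ and $\beta2^{\beta}\ge\beta+1$ for $\beta\ge1$ (the footnote's constraint); and $\partial_{\al}\tau_{\al}(x)=2^{\al}x^{\al+1}\log(2x)<0$ for $x<\tfrac12$ gives $(\dagger)$ with $\al_1=\min\mb{A}$.

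The step you label ``the hard part'' is, however, not closed by the remark that $\log$ is stable under $m\approx n^{\al_i/\al}$. That reindexing keeps $\log m\approx\log n$, but the interval \emph{lengths} change by a power of $n$: $\lam(X_n^{\bdsy{\al}})\approx n^{-1/\al-1}$ whereas $\lam(X_m^{\bdsy{\al_i}})\approx n^{-1/\al-\al_i/\al}$, a ratio $\approx n^{\al_i/\al-1}$. Since the non-increasing density of the singleton measure $\mu_i$ near $0$ is of order $(\log(1/x))^{-1/\beta}x^{-1-\al_i}$, one gets $\mu_i(X_n^{\bdsy{\al}})\approx(\log n)^{-1/\beta}\,n^{\al_i/\al-1}$. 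Feeding this into the sandwich of Theorem~\ref{Cor2} only yields
\begin{align*}
n^{\al_1/\al-1}\int_{\mb{B}}(\log n)^{-1/\beta}d\nu_{\mb{B}}(\beta)
\ \lessapprox\ \mu\left(X_n^{\bdsy{\al}}\right)
\ \lessapprox\ n^{\al_2/\al-1}\int_{\mb{B}}(\log n)^{-1/\beta}d\nu_{\mb{B}}(\beta),
\end{align*}
a polynomially wide window once $\al$ differs from $\al_1,\al_2$; so the bound $\mu(X_n^{\bdsy{\al}})\approx\int_{\mb{B}}(\log n)^{-1/\beta}d\nu_{\mb{B}}(\beta)$ for \emph{every} $\al\in\mb{A}$ does not follow from Theorem~\ref{Cor2} alone, and the logarithmic flat-point asymptotic does not repair the polynomial mismatch in interval lengths. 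To be fair, the paper is equally terse at this exact point (``We can again apply Theorem~\ref{Cor2}''), so your proposal reproduces the paper's argument faithfully, gap included. The essential conclusion $\mu([0,1])=\infty$ is unharmed and your last step proves it cleanly: Theorem~\ref{Cor2} gives $\mu\gtrapprox\mu_1$ on $[0,\tfrac12]$ and $\mu_1([0,\tfrac12])=\sum_m\mu_1(X_m^{\bdsy{\al_1}})\gtrapprox\sum_m(\log m)^{-1}=\infty$ since $\beta\ge1$, irrespective of the reindexing subtlety.
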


\begin{remark}
We remark that even a modification of $\al\to0$ leaves no space for $\mu$ to be finite.
That is, if $\mb{A}=\{0\}$ (so we drop a symbol $\al$ henceforth in this remark) and $\nu_{\mb{A}}$ is a point mass on $\mb{A}$ then $T_{\beta}x=2x$ for $x\in[0,\frac{1}{2}]$ and $y_{n+1}^{\beta}-\frac{1}{2}\approx n^{-\frac{1}{\beta}}$.
This means that $\mu(X_n)\approx\int_{\mb{B}}n^{-\frac{1}{\beta}}d\nu_{\mb{B}(\beta)}$ and we still always have $\mu([0,1])=\infty$ because $\min_\mb{B}\beta\ge1$.
\end{remark}

\subsection{Random weakly expanding maps with a wide entrance}\label{subsec46}
Our example is defined as follows, which is similar to the examples (\ref{ex-1}) and (\ref{ex1}).
Let $\mb{A}=[0,\al_1]$ for some $0<\al_1<\infty$ and $\mb{B}=[1,+\infty)$ and $\nu_{\mb{A}}$ and $\nu_{\mb{B}}$ be probability measures on $\mb{A}$ and $\mb{B}$, respectively.
For $\al\in\mb{A}$ and $\beta\in\mb{B}$, define
\begin{linenomath}
\begin{align}\label{ex3}
T_{\beta}x=
\begin{cases}
x\left(1+2^{\al}x^{\al}\right) &x\in\left[0,\frac{1}{2}\right],\\
\left(x-\frac{1}{2}\right)^{\beta} &x\in\left(\frac{1}{2},1\right].
\end{cases}
\end{align}
\end{linenomath}
From the definition $T_{\beta}(1)=2^{-\beta}$ and hence the image of the right half part $(\frac{1}{2},1]$ will vanish as $\beta$ tends to infinity.
Again, let $\mb{B}_k(\bdsy{\al})=\{\beta\in\mb{B}:\eta(\bdsy{\al},\beta)=k\}$ and $\mb{B}=\bigcup_{k=1}^{\infty}\mb{B}_k(\bdsy{\al})$ (disjoint) for each $\bdsy{\al}=(\al,\al,\dots)\in\mb{A}^{\N}$.

We consider two cases of $\al=0$ and $\al>0$, and we first observe when $\al=0$ which gives a lower bound for the invariant measure for the random piecewise convex map given by \eqref{ex3}.
In this case, it is straightforward to see that for each $\beta\in\mb{B}_k(\bdsy{0})$
\[
x_n^{\bdsy{0}}=2^{-n}
\quad\text{and}\quad
y_{n+1}^{\bdsy{0},\beta}-\tfrac{1}{2}=2^{-\frac{n+k}{\beta}}
\]
with notation $\bdsy{0}=(0,0,\dots)\in\mb{A}^{\N}$.
We also have $\mb{B}_k(\bdsy{0})=[k,k+1)$ for $k\ge1$.
Thus the invariant measure $\mu_{\bdsy{0}}$ for a random piecewise convex map $\{T_{0,\beta};\nu_{\mb{B}}:\beta\in\mb{B}\}$ satisfies that
\begin{linenomath}
\begin{align*}
\mu_{\bdsy{0}}\left(X_n^{\bdsy{0}}\right)
&\approx\sum_{k=1}^{n-1}\int_{\mb{B}_k(\bdsy{0})}\left(y_{n-k+1}^{\bdsy{0},\beta}-\tfrac{1}{2}\right)d\nu_{\mb{B}}(\beta) + \sum_{k=n}^{\infty}\nu_{\mb{B}}\left(\mb{B}_k(\bdsy{0})\right)\\
&=\sum_{k=1}^{n-1}\int_{[k,k+1)}2^{-\frac{n}{\beta}}d\nu_{\mb{B}}(\beta) + \nu_{\mb{B}}\left([n,\infty)\right)\\
&=\int_{[1,n)}2^{-\frac{n}{\beta}}d\nu_{\mb{B}}(\beta) + \nu_{\mb{B}}\left([n,\infty)\right)
\end{align*}
\end{linenomath}
for large $n$ by Theorem \ref{Cor1}.

We secondly consider the case when $\al>0$ which is in need for an upper bound for the invariant measure.
Then for each $\bdsy{\al}=(\al,\al,\dots)\in\mb{A}^{\N}$ and $\beta\in\mb{B}_k(\bdsy{\al})$ we have
\[
y_{n+1}^{\bdsy{\al},\beta}-\frac{1}{2}\approx (n+k)^{-\frac{1}{\al\beta}}
\]
as $n\to\infty$.
Then the invariant measure $\mu_{\bdsy{\al}}$ for the random piecewise convex map $\{T_{\al,\beta};\nu_{\mb{B}}:\beta\in\mb{B}\}$ satisfies that
\begin{linenomath}
\begin{align*}
\mu_{\bdsy{\al}}\left(X_n^{\bdsy{\al}}\right)
&\approx\sum_{k=1}^{n-1}\int_{\mb{B}_k(\bdsy{\al})} n^{-\frac{1}{\al\beta}} d\nu_{\mb{B}}(\beta)
+\sum_{k=n}^{\infty}\nu_{\mb{B}}\left(\mb{B}_k(\bdsy{\al})\right)\\
&\approx \int_{\left[1,\sup_{\mb{B}_{n-1}(\bdsy{\al})}\beta\right)} n^{-\frac{1}{\al\beta}} d\nu_{\mb{B}}(\beta)
+\sum_{k=n}^{\infty}\nu_{\mb{B}}\left(\mb{B}_k(\bdsy{\al})\right)
\end{align*}
\end{linenomath}
From these observations, we have the following.

\begin{proposition}\label{prop4666}
The random piecewise convex map derived from \eqref{ex3} admits a $\lam$-equivalent, conservative and ergodic $\sigma$-finite invariant measure $\mu$ such that for any $\al'\in\mb{A}$ with $\nu_{\mb{A}}\{\al\in\mb{A}:\al'\le\al\}>0$
\begin{linenomath}
\begin{align*}
&\int_{[1,n)}2^{-\frac{n}{\beta}}d\nu_{\mb{B}}(\beta) + \nu_{\mb{B}}\left([n,\infty)\right)
\lessapprox\mu\left(X_n^{\bdsy{0}}\right)
\quad\text{and}\\
&\qquad\qquad\qquad\mu\left(X_n^{\bdsy{\al'}}\right)\lessapprox
\int_{\left[1,\inf_{\mb{B}_{n}(\bdsy{\al'})}\beta\right)} n^{-\frac{1}{\al'\beta}} d\nu_{\mb{B}}(\beta)
+\nu_{\mb{B}}\left(\bigg[\inf_{\mb{B}_n(\bdsy{\al'})}\beta,\infty\bigg)\right)
\end{align*}
\end{linenomath}
for $n$ large enough, where $\bdsy{0}=(0,0,\dots)\in\mb{A}^{\N}$ and $\bdsy{\al'}=(\al',\al',\dots)\in\mb{A}^{\N}$.
\end{proposition}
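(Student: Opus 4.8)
The plan is to recognize \eqref{ex3} as a random piecewise convex map falling under Theorem~\ref{Thm1}, to verify the condition~$(\dagger)$ so that Theorem~\ref{Cor2} becomes available, and then to feed in the two explicit computations made just above --- the one for $\al=0$ supplying the lower bound, the one for $\al>0$ the upper bound.

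First I would check the hypotheses of Theorem~\ref{Thm1}. For $\al\in\mb{A}=[0,\al_1]$ the left branch $\tau_{\al}(x)=x+2^{\al}x^{\al+1}$ is $C^1$ on $[0,\tfrac12]$ with $\tau_{\al}(0)=0$, $\tau_{\al}(\tfrac12)=1$, and $\tau_{\al}'(x)=1+2^{\al}(\al+1)x^{\al}$ is non-decreasing with $\tau_{\al}'(0)\ge1$ and $\tau_{\al}'>1$ on $(0,\tfrac12)$; for $\beta\in\mb{B}=[1,\infty)$ the right branch $S_{\beta}(x)=(x-\tfrac12)^{\beta}$ is $C^1$ and convex on $(\tfrac12,1)$ (this is where $\beta\ge1$ enters), extends continuously to $Y$, and satisfies $S_{\beta}(\tfrac12)=0$, $S_{\beta}'(\tfrac12)\ge0$ and $S_{\beta}'>0$ on $(\tfrac12,1)$. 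Measurability in all variables is immediate, and $\esssup_{\al\in\mb{A}}\tau_{\al}'(\tfrac12)=\al_1+2<\infty$, so \eqref{3} (hence \eqref{4}) holds. Thus Theorem~\ref{Thm1} already produces a $\lam$-equivalent, conservative and ergodic $\sigma$-finite invariant measure $\mu$, and it only remains to establish the two asymptotic estimates.

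Next I would verify $(\dagger)$ and invoke Theorem~\ref{Cor2}. For $x\in(0,\tfrac12)$ we have $2x\in(0,1)$, so $\al\mapsto(2x)^{\al}$ is non-increasing and hence $\al\mapsto\tau_{\al}(x)=x\bigl(1+(2x)^{\al}\bigr)$ is non-increasing; in particular $\tau_{\al}\le\tau_{0}$ on $(0,\tfrac12)$ for every $\al\in\mb{A}$, so the parameter $0$ dominates all the left branches from above, while $\tau_{\al}\ge\tau_{\al'}$ on $(0,\tfrac12)$ precisely for $\al\le\al'$. Consequently $(\dagger)$ holds with the two distinguished parameters $0$ and $\al'$ under the hypothesis imposed on $\al'$, and Theorem~\ref{Cor2} furnishes a constant $M>0$ with $M^{-1}\mu_{\bdsy{0}}([a,b])\le\mu([a,b])\le M\mu_{\bdsy{\al'}}([a,b])$ for all $0\le a<b\le\tfrac12$, where $\mu_{\bdsy{0}}$ and $\mu_{\bdsy{\al'}}$ are the invariant measures of the systems $\{T_{0,\beta};\nu_{\mb{B}}:\beta\in\mb{B}\}$ and $\{T_{\al',\beta};\nu_{\mb{B}}:\beta\in\mb{B}\}$ provided by Theorem~\ref{Cor1}. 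Since $\mu$, $\mu_{\bdsy0}$ and $\mu_{\bdsy{\al'}}$ are all equivalent to $\lam$, endpoints carry no mass, and taking $[a,b]=X_n^{\bdsy{0}}$, resp.\ $[a,b]=X_n^{\bdsy{\al'}}$, gives $\mu(X_n^{\bdsy{0}})\gtrapprox\mu_{\bdsy{0}}(X_n^{\bdsy{0}})$ and $\mu(X_n^{\bdsy{\al'}})\lessapprox\mu_{\bdsy{\al'}}(X_n^{\bdsy{\al'}})$.

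Finally I would substitute the asymptotics of $\mu_{\bdsy{0}}$ and $\mu_{\bdsy{\al'}}$ obtained right before the statement. For $\al=0$ the left branch is $x\mapsto2x$, so $x_n^{\bdsy{0}}=2^{-n}$, and $\eta(\bdsy{0},\beta)=k$ exactly when $2^{-\beta}=S_{\beta}(1)\in X_k^{\bdsy{0}}$, i.e.\ $\mb{B}_k(\bdsy{0})=[k,k+1)$, whence $y_{n-k+1}^{\bdsy{0},\beta}-\tfrac12=(x_n^{\bdsy{0}})^{1/\beta}=2^{-n/\beta}$; Theorem~\ref{Cor1} then yields the stated lower bound $\mu_{\bdsy{0}}(X_n^{\bdsy{0}})\approx\int_{[1,n)}2^{-n/\beta}d\nu_{\mb{B}}(\beta)+\nu_{\mb{B}}([n,\infty))$. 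For $\al'>0$ the left branch is the LSV left branch, so \eqref{eq1} gives $x_n^{\bdsy{\al'}}\sim Cn^{-1/\al'}$ and hence $y_{n+1}^{\bdsy{\al'},\beta}-\tfrac12=(x_{\eta(\bdsy{\al'},\beta)+n}^{\bdsy{\al'}})^{1/\beta}\approx(n+k)^{-1/(\al'\beta)}$ for $\beta\in\mb{B}_k(\bdsy{\al'})$; since $k\mapsto x_k^{\bdsy{\al'}}$ is decreasing, the sets $\mb{B}_k(\bdsy{\al'})$ are consecutive intervals ordered by $k$, so $\sup_{\mb{B}_{n-1}(\bdsy{\al'})}\beta=\inf_{\mb{B}_n(\bdsy{\al'})}\beta$ and $\sum_{k\ge n}\nu_{\mb{B}}(\mb{B}_k(\bdsy{\al'}))=\nu_{\mb{B}}\bigl([\inf_{\mb{B}_n(\bdsy{\al'})}\beta,\infty)\bigr)$, and Theorem~\ref{Cor1} produces the stated upper bound for $\mu_{\bdsy{\al'}}(X_n^{\bdsy{\al'}})$ (and if $\al'=0$ one simply reuses the $\al=0$ computation). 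Combining the two one-sided estimates proves the proposition. I do not expect a real obstacle here: the whole argument is a combination of Theorems~\ref{Thm1}--\ref{Cor2} with elementary monotonicity and the known estimate \eqref{eq1}, and the only point requiring mild care is that all implied constants stay independent of $n$ --- which is automatic because $\bdsy{0}$, $\bdsy{\al'}$ and $\mb{B}$ are fixed, so $M$, the constant $C$ of \eqref{eq1}, and the constants hidden in the $\approx$'s do not depend on $n$ --- the remaining work being the routine bookkeeping of splitting the integrals over the partition $\mb{B}=\bigcup_{k}\mb{B}_k(\bdsy{\al})$.
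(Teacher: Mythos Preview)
Your proposal is correct and follows exactly the paper's approach: the paper's ``proof'' is simply the discussion preceding the proposition, which computes $\mu_{\bdsy{0}}(X_n^{\bdsy{0}})$ and $\mu_{\bdsy{\al}}(X_n^{\bdsy{\al}})$ via Theorem~\ref{Cor1} for the two singleton systems and then tacitly combines them through Theorem~\ref{Cor2}, precisely as you do. One small caveat worth flagging: since $\tau_{\al}\ge\tau_{\al'}\iff\al\le\al'$, the second clause of $(\dagger)$ with $\al_2=\al'$ actually needs $\nu_{\mb{A}}\{\al:\al\le\al'\}>0$, not the hypothesis $\nu_{\mb{A}}\{\al:\al'\le\al\}>0$ stated in the proposition---this appears to be a typo in the statement (compare Propositions~\ref{prop0} and~\ref{prop444}, which have the inequality the other way), so your line ``$(\dagger)$ holds \dots\ under the hypothesis imposed on $\al'$'' glosses over a sign mismatch that is the paper's, not yours.
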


\begin{remark}
As an example of this proposition, if $d\nu_{\mb{B}}(\beta)=(\ell-1)\beta^{-\ell}d\beta$ for some $\ell>1$ on $\mb{B}=[1,+\infty)$ then the invariant measure $\mu$ is infinite when $\ell\le2$, independent of the choice of $\nu_{\mb{A}}$.
The calculation is similar as in Remark \ref{rem42}:
To see this, we need to show the lower bound in Proposition \ref{prop4666} is proportional to or greater than $n^{-1}$.
\begin{linenomath}
\begin{align*}
\int_{[1,n)}2^{-\frac{n}{\beta}}d\nu_{\mb{B}}(\beta) + \nu_{\mb{B}}\left([n,\infty)\right)
&\ge 2^{-n}\int_1^n (\ell-1)\beta^{-\ell}d\beta +\int_n^{\infty} (\ell-1)\beta^{-\ell}d\beta\\
&= 2^{-n}\left(1-n^{-(\ell-1)}\right)+n^{-(\ell-1)}\\
&> n^{-1}
\end{align*}
\end{linenomath}
and our conclusion is valid.
\end{remark}

\subsection{Counterexample with infinite derivative}\label{counter}
We finally illustrate an example which does  not satisfy \eqref{3} in Theorem \ref{Thm1}.
The example below still admits an equivalent $\sigma$-finite invariant measure but the density function of the invariant measure is no longer bounded away from zero.

Let $\mb{B}$ be a parameter space and $\nu_{\mb{B}}$ be a probability measure on $\mb{B}$.
For $\beta\in\mb{B}$, define
\begin{linenomath}
\begin{align}\label{ce1}
T_{\beta}x=
\begin{cases}
1-\sqrt{1-2x} &\left[0,\frac{1}{2}\right],\\
S_{\beta} &\left(\frac{1}{2},1\right]
\end{cases}
\end{align}
\end{linenomath}
where $S_{\beta}$'s satisfy \eqref{0}--\eqref{2}.
Note that \eqref{3} does not hold, while \eqref{4} holds.
We then assume that there is some $\kappa\in(0,1)$ such that for $\nu_{\mb{B}}$-almost every $\beta\in\mb{B}$, $S_{\beta}(1)\le 1-\kappa$ holds.

Since \eqref{ce1} satisfies \eqref{4}, there is an equivalent $\sigma$-finite invariant measure $\mu$ for this random piecewise convex map.
Then we have for any $0<\vep<\kappa$
\[
\mu\left([1-\vep,1]\right)
=\int_{\mb{B}}\mu\left(T_{\beta}^{-1}[1-\vep,1]\right)d\nu_{\mb{B}}(\beta)
=\mu\left(\left[\tfrac{1-\vep^2}{2},\tfrac{1}{2}\right]\right).
\]
Since $\frac{d\mu}{d\lam}\le C_1$ on $[\frac{1-\kappa^2}{2},1]$ for some $C_1>0$,
\[
\mu\left(\left[\tfrac{1-\vep^2}{2},\tfrac{1}{2}\right]\right)\le \frac{C_1\vep^2}{2}.
\]
If $\frac{d\mu}{d\lam}$ were bounded away from zero on $Y$, then we also have
\[
\mu\left([1-\vep,1]\right)\ge C_2\vep
\]
for some $C_2>0$.
However this implies $\vep\ge\frac{2C_2}{C_1}$, which is contradiction since $\vep>0$ can be taken arbitrary small.
Therefore, we conclude that $\frac{d\mu}{d\lam}(x)\to0$ as $x\to 1$.

\subsection*{Acknowledgements}
This work was supported by the Research Institute for Mathematical Sciences, an International Joint Usage/Research Center located in Kyoto University.
Hisayoshi Toyokawa was supported by JSPS KAKENHI Grant Number 21K20330.


\normalsize


\begin{thebibliography}{[HD82]}




\normalsize
\baselineskip=17pt


\bibitem{A}
\newblock J.  Aaronson,
\newblock\emph{An introduction to infinite ergodic theory},
\newblock Mathematical Surveys and Monographs, 50. American Mathematical Society, Providence, RI, (1997), xii+284 pp.


\bibitem{ANV}
\newblock R. Aimino, M. Nicol and S. Vaienti,
\newblock Annealed and quenched limit theorems for random expanding dynamical systems,
\newblock\emph{Probab. Theory Related Fields}, \textbf{162} (2015), 233--274. 


\bibitem{Ar}
\newblock L. Arnold, 
\newblock\emph{Random dynamical systems},
\newblock Springer (1995).


\bibitem{BB}
\newblock W. Bahsoun and C. Bose,
\newblock Mixing rates and limit theorems for random intermittent maps,
\newblock\emph{Nonlinearity} \textbf{29} (2016), 1417--1433.


\bibitem{BBD}
\newblock W. Bahsoun, C. Bose and Y. Duan,
\newblock Decay of correlation for random intermittent maps,
\emph{Nonlinearity} \textbf{27} (2014), 1543--1554.


\bibitem{BBR}
\newblock W. Bahsoun, C. Bose and M. Ruziboev,
\newblock Quenched decay of correlations for slowly mixing systems,
\newblock\emph{Trans. Amer. Math. Soc.}, \textbf{372} (2019), 6547--6587.


\bibitem{BG}
\newblock W. Bahsoun and P. G\'{o}ra,
\newblock Weakly convex and concave random maps with position dependent probabilities
\newblock \emph{Stochastic Anal. Appl.}, \textbf{21} (2003), 983--994. 

 
\bibitem{C}
\newblock H. Cui,
\newblock Invariant densities for intermittent maps with critical points,
\newblock\emph{J. Difference Equ. Appl.}, \textbf{27} (2021), 404--421.  


\bibitem{D}
\newblock D. Dragi\v{c}evi\'{c}, G. Froyland, C. Gonz\'{a}lez-Tokman and S. Vaienti, 
\newblock A spectral approach for quenched limit theorems for random expanding dynamical systems, 
\newblock \emph{Comm. in Math. Phy.}, \textbf{360} (2018), 1121--1187.


\bibitem{D2}
\newblock D. Dragi\v{c}evi\'{c}, G. Froyland, C. Gonz\'{a}lez-Tokman and S. Vaienti,
\newblock Almost sure invariance principle for random piecewise expanding maps,
\newblock\emph{Nonlinearity}, \textbf{31} (2018), 2252--2280. 


\bibitem{F0}
\newblock S. R. Foguel,
\newblock\emph{The ergodic theory of Markov processes},
\newblock Van Nostrand Mathematical Studies, No. 21. Van Nostrand Reinhold Co., New York-Toronto, Ont.-London 1969 v+102 pp.


\bibitem{F}
\newblock S. R. Foguel, 
\newblock \emph{Selected topics in the study of Markov operators},
\newblock Carolina Lecture Series, 9. University of North Carolina, Department of Mathematics, Chapel Hill, N.C., 1980.


\bibitem{G}
\newblock S. Gou\"{e}zel,
\newblock Sharp polynomial estimates for the decay of correlations,
\newblock\emph{Israel J. Math.}, \textbf{139} (2004), 29--65. 


\bibitem{I0}
\newblock T. Inoue,
\newblock Asymptotic stability of densities for piecewise convex maps,
\newblock\emph{Ann. Polon. Math.}, \textbf{57} (1992), 83--90.


\bibitem{I}
\newblock T. Inoue,
\newblock Weakly attracting repellors for piecewise convex maps,
\newblock\emph{Japan J. Indust. Appl. Math.}, \textbf{9} (1992), 413--430.


\bibitem{I2}
\newblock T. Inoue,
\newblock Invariant measures for position dependent random maps with continuous random parameters,
\newblock\emph{Studia Math.}, \textbf{208} (2012), 11--29.

 
\bibitem{I3}
\newblock T. Inoue,
\newblock First return maps of random maps and invariant measures,
\newblock\emph{Nonlinearity}, \textbf{33} (2020), 249--275.


\bibitem{I4}
\newblock T. Inoue,
\newblock Comparison theorems for invariant measures of random dynamical systems,
\newblock\emph{arXiv reprint}, arXiv:2303.09784.


\bibitem{Is}
\newblock M. S. Islam,
\newblock Piecewise convex deterministic dynamical systems and weakly convex random dynamical systems and their invariant measures,
\newblock\emph{Int. J. Appl. Comput. Math.}, \textbf{7} (2021), 25 pp. 


\bibitem{Yu}
\newblock Y. Kifer,
\newblock\emph{Ergodic theory of random transformations},
\newblock Progress in Probability and Statistics, 10. Birkh\"{a}user Boston, Inc., Boston, MA, 1986. x+210 pp.


\bibitem{Kre}
\newblock U. Krengel,
\newblock\emph{Ergodic theorems},
\newblock De Gruyter Studies in Mathematics, 6. Walter de Gruyter \& Co., Berlin, 1985. viii+357 pp.


\bibitem{LY}
\newblock A. Lasota and J. A. Yorke,
\newblock Exact dynamical systems and the Frobenius-Perron operator,
\newblock\emph{Trans. Amer. Math. Soc.}, \textbf{273} (1982), 375--384.


\bibitem{LSV}
\newblock C. Liverani, B. Saussol and S. Vaienti,
\newblock A probabilistic approach to intermittency,
\newblock \emph{Ergodic Theory Dynam. Systems}, \textbf{19} (1999), 671--685.


\bibitem{NNTY}
\newblock F. Nakamura, Y. Nakano, H. Toyokawa and K. Yano,
\newblock Arcsine law for random dynamics with a core,
\newblock \emph{Nonlinearity}, \textbf{36} (2023), 1491--1509.


\bibitem{S}
\newblock O. Sarig,
\newblock Subexponential decay of correlations,
\newblock\emph{Invent. Math.}, \textbf{150} (2002), 629--653. 


\bibitem{Th}
\newblock M. Thaler,
\newblock Estimates of the invariant densities of endomorphisms with indifferent fixed points,
\newblock\emph{Israel J. Math.}, \textbf{37} (1980), 303--314.


\bibitem{TZ}
\newblock M. Thaler and R. Zweim\"{u}ller,
\newblock Distributional limit theorems in infinite ergodic theory,
\newblock\emph{Probab. Theory Related Fields} \textbf{135} (2006), 15--52. 


\bibitem{T}
\newblock H. Toyokawa,
\newblock $\sigma$-finite invariant densities for eventually conservative Markov operators,
\newblock \emph{Discrete Continuous Dynamical Systems- A}, \textbf{40} (2020): 2641--2669.


\bibitem{T2}
\newblock H. Toyokawa,
\newblock On the existence of a $\sigma$-finite acim for a random iteration of intermittent Markov maps with uniformly contractive part,
\newblock \emph{Stochastics and Dynamics}, \textbf{21}, (2021), 14 pages.


\bibitem{Yo}
\newblock K. Yosida,
\newblock \emph{Functional analysis},
\newblock Reprint of the sixth (1980) edition. Classics in Mathematics. Springer-Verlag, Berlin, (1995).


\bibitem{Y}
\newblock L.-S. Young,
\newblock Recurrence times and rates of mixing
\newblock \emph{Israel J. Math.}, \textbf{110} (1999), 153--188.


\end{thebibliography}
\end{document}